\newcommand{\beq}{\begin{equation}}
\newcommand{\eeq}{\end{equation}}
\newcommand{\ben}{\begin{eqnarray}}
\newcommand{\een}{\end{eqnarray}}
\newcommand{\beno}{\begin{eqnarray*}}
\newcommand{\eeno}{\end{eqnarray*}}
\newcommand{\R}{\mathbb{R}}
\newtheorem{thm}{Theorem}[section]
\newtheorem{defi}[thm]{Definition}
\newtheorem{lem}[thm]{Lemma}
\newtheorem{prop}[thm]{Proposition}
\newtheorem{coro}[thm]{Corollary}
\begin{document}

\title[Axially symmetric solution]{Axially symmetric solutions of Allen-Cahn equation with finite Morse index$^*$}
\author[C. Gui, K. Wang and J. Wei]{
Changfeng Gui$^\dag$,
Kelei Wang$^\ddag$ and Juncheng Wei$^\S$}
\thanks{$^\dag$ Department of Mathematics
University of Texas at San Antonio
San Antonio, TX 78249 USA.
{Email: changfeng.gui@utsa.edu}}

\thanks{
$^\ddag$School of Mathematics and Statistics \& Computational
Science Hubei Key Laboratory, Wuhan University, Wuhan 430072, China.
{Email: wangkelei@whu.edu.cn}. }

\thanks{$^\S$Department of Mathematics, University of British
Columbia, Vancouver, B.C., Canada, V6T 1Z2.
{Email: jcwei@math.ubc.ca}.}

\thanks{$\ddag$  The research of K. Wang was supported
by NSFC. 11871381 and 11631011. The research of J. Wei is partially supported by NSERC of Canada. }
\date{\today}

\begin{abstract}
In this paper we study axially symmetric solutions of Allen-Cahn equation with finite Morse index. It is shown that there does not exist such a solution in dimensions between $4$ and $10$. In dimension $3$, we prove that these solutions have finitely many ends. Furthermore, the solution has exactly two ends if its Morse index equals $1$.
\end{abstract}
\keywords{ Allen-Cahn, stable or finite Morse index solutions, axially symmetric solutions.}

\subjclass{  }

\maketitle
\renewcommand{\theequation}{\thesection.\arabic{equation}}
\setcounter{equation}{0}
%%%%%%%%%%%%%%%%%%%%%%%%%%%%%%%%%%%%%%%%%%%%%%

\section{Introduction}
\setcounter{equation}{0}

\renewcommand{\theequation}{\thesection.\arabic{equation}}
\setcounter{equation}{0}
%%%%%%%%%%%%%%%%%%%%%%%%%%%%%%%%%%%%%%%%%%%%%%

In this paper we study axially symmetric solutions of the Allen-Cahn equation
\begin{equation}\label{Allen-Cahn}
\Delta u=W^\prime(u), \quad \mbox{ in } \R^{n+1}.
\end{equation}
Here $W(u)$ is a general double well potential, that is, $W\in C^3([-1,1])$ satisfying
\begin{itemize}
\item $W>0$ in $(-1,1)$ and $W(\pm1)=0$;
\item  $W^\prime(\pm1)=0$ and $W^{\prime\prime}(-1)=W^{\prime\prime}(1)=2$;
\item $W$ is even and $0$ is the unique critical point of $W$ in $(-1,1)$.
\end{itemize}
A typical model is given by $W(u)=(1-u^2)^2/4$.

For this class of double well potential $W$, there exists a unique solution  to the following one dimensional problem
\begin{equation}\label{1d problem}
g^{\prime\prime}(t)=W^\prime(g(t)),  \quad \ g(0)=0 \ \ \quad \mbox{and }\ \lim_{t\to\pm\infty}g(t)=\pm 1.
\end{equation}
Moreover, as $t\to\pm\infty$, $g(t)$ converges exponentially to $\pm1$ and  the following quantity is well defined
\[\sigma_0:=\int_{-\infty}^{+\infty}\left[\frac{1}{2}g^\prime(t)^2+W(g(t))\right]dt\in (0,+\infty).\]

In fact,  as $t\to\pm\infty$, the following expansions hold: there exists a positive constant $A$ such that
for all $|t|$ large,
\begin{equation*}
  \begin{cases}
   &g(t)=(1- A e^{-\sqrt{2} |t|} ) sign(t) +O(e^{-2 \sqrt{2} |t|}),  \\
    & g^\prime(t)= \sqrt{2}A  e^{- \sqrt{2} |t|}+O(e^{-2\sqrt{2}  |t|}), \\
 & g^{\prime\prime}(t)=-2 A  e^{-  \sqrt{2}|t|}+O(e^{-2\sqrt{2}  |t|}).
  \end{cases}
\end{equation*}

Denote points in $\R^{n+1}$ by $(x_1,\cdots,x_{n},z)$ and let $r:=\sqrt{x_1^2+\cdots+x_{n}^2}$.

\begin{defi}
\begin{itemize}
\item A function $u$ is axially symmetric if $u(x_1,\cdots,x_{n},z)=u(r,z)$.

\item  A solution of \eqref{Allen-Cahn} is stable in a domain $\Omega\subset\R^{n+1}$ if for any $\varphi\in C_0^\infty(\Omega)$,
\[\mathcal{Q}_\Omega(\varphi):=\int_{\Omega}\left[|\nabla\varphi|^2+W^{\prime\prime}(u)\varphi^2\right]\geq 0.\]
\item A solution of \eqref{Allen-Cahn} has finite Morse index in $\R^{n+1}$ if
\[\sup_{R>0}\mbox{dim}\left\{\mathcal{X}\subset C_0^\infty(B_R(0)): \mathcal{Q}\lfloor_{\mathcal{X}}<0\right\}<+\infty.\]
It is well known that  the finite Morse index condition is equivalent to the condition of being stable outside a compact set (see, e.g., \cite{6}).
\end{itemize}
\end{defi}

\begin{defi}
An axially symmetric solution of \eqref{Allen-Cahn} has finitely many ends if for some $R>0$,
\begin{itemize}
\item $u\neq 0$ in $B_R^{n}(0)\times\{|z|>R\}$;
\item outside $\mathcal{C}_R:=B_R^{n}(0)\times\R$, $\{u=0\}$ consists of finitely many graphs $\Gamma_\alpha$, where
\[\Gamma_\alpha=\left\{z=f_\alpha(r)\right\}, \quad \alpha=1,\cdots, Q,\]
and $f_1<\cdots<f_Q$.
\end{itemize}
\end{defi}

Our first main result is
\begin{thm}\label{main result 1.2}
If $3\leq n \leq 9$, any axially symmetric solution of \eqref{Allen-Cahn}, which  is stable outside a cylinder $\mathcal{C}_R$,   depends only on $z$.
\end{thm}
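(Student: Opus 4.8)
The plan is to combine three classical ingredients: a monotonicity/Modica-type estimate, the stable De Giorgi-type classification of stable solutions in low dimensions, and a blow-down argument that reduces the axially symmetric problem on $\mathbb{R}^{n+1}$ to a stability question about cones or hyperplanes. The key point is that stability outside a compact cylinder, together with the axial symmetry, should force the level sets to be flat, so $u=u(z)$ and hence $u(z)=g(z-c)$ for some constant $c$ (or $u\equiv\pm1$).

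First I would establish the basic regularity and energy bounds. Since $u$ is stable outside $\mathcal{C}_R$ and $|u|\le 1$ by the maximum principle (using that $0$ is the only interior critical point of $W$ and $W'(\pm1)=0$), standard interior elliptic estimates give uniform bounds on $\nabla u$ and $\nabla^2 u$. I would then prove a monotonicity formula for the rescaled energy
\begin{equation*}
E(x,\rho)=\frac{1}{\rho^{n}}\int_{B_\rho(x)}\left[\frac{1}{2}|\nabla u|^2+W(u)\right],
\end{equation*}
which is nondecreasing in $\rho$ for solutions of \eqref{Allen-Cahn}. Combined with the stability-outside-a-compact-set hypothesis and a Pohozaev-type identity, this should yield a uniform upper bound $E(0,\rho)\le C$ for all large $\rho$, i.e. the energy grows at most like $\rho^{n}$. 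This linear-in-$\rho^n$ growth is exactly the threshold that allows one to pass to the limit in a blow-down sequence.

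Next I would run the blow-down argument. Setting $u_\lambda(x):=u(\lambda x)$ and letting $\lambda\to\infty$, the uniform energy bound and monotonicity formula let me extract a limit $u_\infty$ whose energy is stationary in $\rho$; by the monotonicity formula $u_\infty$ is homogeneous, and its nodal set converges (in the varifold sense) to a stationary, \emph{stable}, axially symmetric minimal cone in $\mathbb{R}^{n+1}$. Here the dimension restriction $3\le n\le 9$ (equivalently ambient dimension $n+1\le 10$) enters decisively: by the Simons-type theorem on stable minimal cones, the only stable minimal cone in $\mathbb{R}^{n+1}$ for $n+1\le 7$ is a hyperplane, and the axial symmetry together with the work extending this up to dimension $10$ (ruling out the Lawson/Simons cones in the symmetric class) forces the limit cone to be a hyperplane as well. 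Flatness of the blow-down then feeds back, via an improvement-of-flatness / Liouville argument for the original stable solution, to show that the level sets of $u$ itself are flat hyperplanes $\{z=\mathrm{const}\}$; axial symmetry pins the normal direction to be $\pm e_z$, and then the one-dimensional problem \eqref{1d problem} together with uniqueness gives $u(x)=g(z-c)$, so $u$ depends only on $z$.

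The main obstacle is the passage from the flat blow-down limit back to flatness of $u$ at all scales, i.e. upgrading the varifold convergence to genuine classification in the borderline dimensions. In the purely minimal-surface setting this is delicate precisely because dimension $n+1=8$ is where nontrivial stable cones first appear, and one must exploit the \emph{axial symmetry} to exclude them up to $n+1=10$; quantitatively, I expect to need a careful Sternberg--Zumbrun type estimate bounding the second fundamental form and the tangential gradient of $u$ along level sets, controlled by the stability inequality, to conclude that the level sets have vanishing curvature. Handling the interaction of this curvature estimate with the coordinate singularity along the axis $\{r=0\}$ (where axial symmetry degenerates) is the delicate technical core, and I would treat it by working with the weighted area element $r^{n-1}$ and verifying that the singular set on the axis carries no stability-violating contribution.
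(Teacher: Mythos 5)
Your proposal has a genuine gap, and the dimension mechanism you invoke is not the one that makes the theorem true. The central difficulty is not ruling out a nonflat blow-down cone: by the curvature decay of Theorem \ref{thm curvature decay} the ends are already graphs $\{z=f_\alpha(r)\}$ with $f_\alpha^\prime\to 0$ (Lemma \ref{lem decay rate of derivatives I}), so the blow-down is always a hyperplane, taken with multiplicity equal to the number of ends. A flat blow-down therefore cannot distinguish $u=g(z-c)$ from a solution with several asymptotically parallel ends, and the improvement-of-flatness/Savin-type step you propose requires multiplicity one as a hypothesis --- which is exactly what has to be proven. A clear warning sign is that your argument nowhere uses the lower bound $n\ge 3$: Simons-type cone rigidity and blow-down arguments only get stronger in low dimensions, so your scheme would equally ``prove'' the statement for $n=2$, where it is false (del Pino--Kowalczyk--Wei \cite{5} construct nontrivial axially symmetric finite Morse index solutions in $\R^3$).

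What the paper actually does is different in its key step. After establishing the curvature decay and the graphical structure of the ends, it derives a Toda-type inequality \eqref{Toda 2} for the gap $v_\alpha=f_\alpha-f_{\alpha-1}$ between adjacent ends, together with a reduced stability inequality \eqref{reduction of stability} inherited from the stability of $u$. Setting $V_\alpha=e^{-\sqrt{2}v_\alpha}$ and testing with $V_\alpha^q\eta$ in the style of Farina's work on $-\Delta u=e^u$ \cite{7}, one obtains $\int V_\alpha^{2q+1}r^{n-1}dr<\infty$ with the choice $2q+1=n/2$, which is admissible precisely because $n\le 9$; the resulting decay $r^2e^{-\sqrt{2}v_\alpha}\to 0$ then contradicts the ODE when $n\ge 3$, exactly as in Dancer--Farina \cite{4}. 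So the window $3\le n\le 9$ is the Dancer--Farina window for the Liouville/Toda reduction governing the interaction of adjacent ends, not the Simons window for stable minimal cones. Only after the number of ends has been reduced to one does the paper invoke the Hutchinson--Tonegawa and Savin-type machinery (\cite{10}, \cite{15}) together with the energy growth bound of Lemma \ref{lem energy growth localized} to conclude one-dimensional symmetry --- this is the one place where your blow-down picture is legitimately used, because multiplicity one is by then known.
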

In other words,  the solution has exactly one end and it is one dimensional, i. e. all of its level sets are hyperplanes of the form
 $\{z=t\}$. Therefore  for $3\leq n \leq 9$, there does not exist   axially symmetric solutions which is stable outside a cylinder, except the trivial ones (i.e., constant solutions $\pm 1$  and $g$ in \eqref{1d problem}).

The dimension bound in this theorem is {\it sharp}. On one hand, if $n\geq 10$, there do exist \emph{stable}, axially symmetric solutions of \eqref{Allen-Cahn} in $\R^{n+1}$ with two ends, see Agudelo-Del Pino-Wei \cite{1}. (The two-end solutions constructed in the paper for $3 \le n\le 9 $ are also shown to be unstable by a different argument. Our proof of Theorem \ref{main result 1.2} will rely on an idea of Dancer and Farina \cite{4}.)
On the other hand, nontrivial axially symmetric solutions with finite Morse index in $\R^3$ also exist. (See del Pino-Kowalczyk-Wei \cite{5}.) However we show that
\begin{thm}\label{main result 1.1}
If $n=2$, an axially symmetric solution of \eqref{Allen-Cahn} with finite Morse index  has finitely many ends.
Moreover, there exists a constant $C$ such that for any $x\in\R^3$ and $R>0$,
\begin{equation}\label{energy growth}
\int_{B_R(x)}\left[\frac{1}{2}|\nabla u|^2+W(u)\right]\leq CR^2.
\end{equation}
\end{thm}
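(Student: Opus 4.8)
The plan is to establish the quadratic energy bound \eqref{energy growth} first, and then to read off the structure of the ends from the resulting asymptotic geometry. Throughout I would use that $|u|\le 1$ by the maximum principle, that $|\nabla u|$ is bounded by interior elliptic estimates, and that the pointwise Modica inequality $\frac12|\nabla u|^2\le W(u)$ holds, so that $e(u):=\frac12|\nabla u|^2+W(u)$ is bounded. By the stated equivalence, finite Morse index means $u$ is stable in $\R^3\setminus B_{R_0}$ for some $R_0$; the whole difficulty is that stability is available only in this exterior region, so every estimate must be localized away from the compact core $B_{R_0}$, whose contribution to the energy is a harmless $O(1)$.

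For the energy bound I would combine two ingredients. The first is Modica's monotonicity formula: for every $x\in\R^3$ the normalized energy $\Theta_x(R):=R^{-2}\int_{B_R(x)}e(u)$ is nondecreasing in $R$. The second is the stability inequality in the exterior region, fed into the geometric (Sternberg--Zumbrun / Ambrosio--Cabré) Poincaré inequality: testing $\mathcal Q_{\R^3\setminus B_{R_0}}$ with $\varphi=|\nabla u|\,\eta$ (on $\{\nabla u\neq0\}$) controls $\int |A|^2|\nabla u|^2\eta^2$ and the tangential oscillation of $|\nabla u|$ by $\int|\nabla u|^2|\nabla\eta|^2$, where $A$ is the second fundamental form of the level sets. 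Choosing $\eta$ to equal $1$ on $B_R\setminus B_{4R_0}$, to vanish on $B_{2R_0}$ and outside $B_{2R}$, with $|\nabla\eta|\le C/R$ away from the core, yields in dimension $3$ the bound $\int_{B_R(0)}e(u)\le CR^2$, the losses near $\partial B_{R_0}$ and inside $B_{4R_0}$ being $O(1)$. The uniform statement \eqref{energy growth} then follows by a soft argument: the inclusion $B_R(x)\subset B_{R+|x|}(0)$ together with the origin bound gives $\Theta_x(R)\le C(1+|x|/R)^2$, so $\Theta_x(\infty):=\lim_{R\to\infty}\Theta_x(R)\le C$ uniformly in $x$; since $\Theta_x(\cdot)$ is nondecreasing, $\Theta_x(R)\le\Theta_x(\infty)\le C$ for every $R$, which is exactly \eqref{energy growth}.

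With the uniform density bound in hand, the ends are identified by a blow-down analysis. The rescalings $u_\lambda(X):=u(\lambda X)$ have uniformly bounded energy density and are stable outside $B_{R_0/\lambda}$, so by the Allen--Cahn regularity and compactness theory their level sets converge (as varifolds, with smooth convergence away from the concentration set) to a stable, rotationally symmetric minimal cone in $\R^3$, smooth away from the origin. By the Bernstein theorem for stable minimal surfaces in $\R^3$ (Fischer-Colbrie--Schoen, do Carmo--Peng) this cone is a plane, and rotational symmetry about the $z$-axis forces it to be the horizontal plane $\{z=0\}$, with some integer multiplicity $Q$; the quantization $\Theta_\infty=\pi Q\sigma_0$ together with $\Theta_\infty\le C$ gives $Q<\infty$. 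Transferring the smooth convergence back to unit scale shows that outside a cylinder $\mathcal C_R$ the set $\{u=0\}$ consists of exactly $Q$ graphs $z=f_\alpha(r)$ with $f_1<\cdots<f_Q$, and that $u$ has no zeros in the thin cylinder $B_R^{2}(0)\times\{|z|>R\}$ near the axis, which is precisely the definition of finitely many ends.

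The step I expect to be the \emph{main obstacle} is the energy estimate itself: making the stability-based argument run when stability holds only outside $B_{R_0}$, so that the cutoff must be engineered to avoid the core while still producing the sharp power $R^2$ rather than $R^2\log R$. A secondary difficulty is the rigidity near the axis in the final step---excluding zeros of $u$ on or close to the $z$-axis at infinity and ruling out oscillating or spiraling ends---where the singular drift $\frac1r u_r$ of the reduced equation $u_{rr}+\frac1r u_r+u_{zz}=W'(u)$ must be controlled together with the axial symmetry and the now-established bounded density.
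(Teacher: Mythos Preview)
Your proposal reverses the logical order of the paper and the reversal contains a genuine gap at the step you yourself flag as the main obstacle.

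The Sternberg--Zumbrun inequality with the test function $\varphi=|\nabla u|\,\eta$ yields
\[
\int |B(u)|^2\,|\nabla u|^2\,\eta^2 \;\le\; \int |\nabla u|^2\,|\nabla\eta|^2,
\]
which bounds the \emph{curvature} integral, not $\int e(u)\,\eta^2$. With your cutoff the right-hand side is a priori only controlled by $CR^{-2}\int_{B_{2R}\setminus B_R}|\nabla u|^2$, and the trivial bound $|\nabla u|\le C$ gives this no better than $O(R)$; in any case nothing here converts into a bound on $\int_{B_R}e(u)$. There is no short passage from ``bounded curvature integral'' to ``$\int_{B_R}e(u)\le CR^2$'' --- if there were, the quadratic energy growth for general finite-Morse-index solutions in $\R^3$ would be elementary rather than the subject of \cite{3,17}. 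Notice also that your energy argument never uses the axial symmetry, which should already be a warning sign.

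The paper proceeds in the opposite order. Axial symmetry is used first: the blow-up limits at points with $r\to\infty$ are \emph{two}-dimensional stable solutions (Proposition~\ref{prop limit at infinity}), so the stable De~Giorgi theorem gives curvature decay on the level sets outside a cylinder (Theorem~\ref{thm curvature decay}) with no a~priori energy bound required. This forces $\{u=0\}\setminus\mathcal C_{R_2}$ to be a (possibly infinite) union of graphs $\Gamma_\alpha=\{z=f_\alpha(r)\}$, each carrying at most $C R^2$ energy in $\mathcal C_R$ (Lemma~\ref{lem energy growth localized}). Finiteness of the number of ends is then obtained by a nodal-domain analysis of $u_z$: every nodal domain of $u_z$ must meet the compact core (Lemma~\ref{lem nodal domain}, via a Liouville argument for $\operatorname{div}(\varphi^2\nabla(u_z/\varphi))=0$), hence there are only finitely many such domains, and a Jordan-curve combinatorial argument in the $(r,z)$-plane then shows the ends cannot be infinite. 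The quadratic energy bound \eqref{energy growth} is deduced \emph{afterwards}, by summing the localized estimates over the now finitely many ends. In short, the paper gets ``finitely many ends $\Rightarrow$ energy bound'', not the other way around, and the key input replacing your missing energy estimate is the curvature decay coming from the two-dimensional reduction.
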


Concerning solutions with a low Morse index we first show that
\begin{thm}\label{main result 2}
If $n=2$, any axially symmetric, stable solution of \eqref{Allen-Cahn} depends only on $z$.
\end{thm}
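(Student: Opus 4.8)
The plan is to combine the stability inequality with the geometric (Sternberg--Zumbrun) Poincar\'e inequality and then to exploit the axial symmetry to force every regular level set to be a horizontal hyperplane. Throughout write the ambient space as $\R^3$ and note that a stable solution has Morse index zero, hence finite Morse index; consequently Theorem \ref{main result 1.1} applies and supplies the quadratic energy bound \eqref{energy growth}, so in particular $\int_{B_R(0)}|\nabla u|^2\leq CR^2$. This bound is the one place where axial symmetry is indispensable: for a general non-symmetric stable solution in $\R^3$ it is precisely the missing ingredient, and the remainder of the argument would otherwise settle the (open) De Giorgi-type statement in $\R^3$.

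First I would record the pointwise consequence of \eqref{Allen-Cahn} for $|\nabla u|$. Differentiating \eqref{Allen-Cahn} and invoking Bochner's identity $\frac12\Delta|\nabla u|^2=|\nabla^2u|^2+W''(u)|\nabla u|^2$ gives, on the open set $\{\nabla u\neq0\}$, the level-set decomposition of the Hessian
\[|\nabla^2u|^2-\big|\nabla|\nabla u|\big|^2=\big|\nabla_T|\nabla u|\big|^2+|A|^2\,|\nabla u|^2,\]
where $\nabla_T$ is the gradient tangential to the level sets of $u$ and $|A|^2$ is the squared norm of their second fundamental form. Inserting the test function $\varphi=|\nabla u|\,\eta$ with $\eta\in C_0^\infty(\R^3)$ into the stability inequality $\mathcal{Q}(\varphi)\geq0$ (the set $\{\nabla u=0\}$ being handled by the usual regularization $\sqrt{|\nabla u|^2+\varepsilon}$) and using the identity above, the second-order terms combine to yield the geometric Poincar\'e inequality
\[\int_{\{\nabla u\neq0\}}\Big(\big|\nabla_T|\nabla u|\big|^2+|A|^2|\nabla u|^2\Big)\eta^2\leq\int_{\R^3}|\nabla u|^2|\nabla\eta|^2.\]

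Next I would kill the right-hand side with a logarithmic cutoff. Take $\eta_R\equiv1$ on $B_{\sqrt R}(0)$, $\eta_R\equiv0$ outside $B_R(0)$, and $\eta_R(x)=\log(R/|x|)/\log\sqrt R$ on the annulus, so that $|\nabla\eta_R|=2/(|x|\log R)$ there. Writing $e(\rho)=\int_{B_\rho(0)}|\nabla u|^2$ and integrating by parts in $\rho$,
\[\int_{\R^3}|\nabla u|^2|\nabla\eta_R|^2=\frac{4}{(\log R)^2}\int_{\sqrt R}^{R}\frac{1}{\rho^2}\,de(\rho)\leq\frac{C\big(1+\log R\big)}{(\log R)^2},\]
which tends to $0$ as $R\to\infty$; here the quadratic bound $e(\rho)\leq C\rho^2$ is used exactly at the critical rate. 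Letting $R\to\infty$ forces $\big|\nabla_T|\nabla u|\big|\equiv0$ and $|A|^2|\nabla u|^2\equiv0$, so on the regular set $\{\nabla u\neq0\}$ the level sets of $u$ are totally geodesic (flat) and $|\nabla u|$ is constant along each of them.

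Finally I would use the symmetry to upgrade flatness to the one-dimensionality. Each regular level set of the axially symmetric function $u$ is a surface of revolution about the $z$-axis, a connected totally geodesic surface in $\R^3$ is contained in an affine plane, and the only affine planes invariant under all rotations about the $z$-axis are the horizontal ones $\{z=\text{const}\}$. Hence near every regular point with $r>0$ the level set is a piece of a horizontal plane, so $\nabla u$ is vertical and $u_r=0$ there; on $\{\nabla u=0\}$ we trivially have $u_r=0$, and $u_r=0$ on the axis by symmetry, whence $u_r\equiv0$ and $u$ depends only on $z$. The main obstacle is the borderline character of the scheme in dimension three: the logarithmic cutoff only wins a factor $1/\log R$, so everything succeeds solely because the sharp quadratic bound \eqref{energy growth} holds, and that sharp bound is exactly what the axial symmetry (through Theorem \ref{main result 1.1}) provides; a secondary technical point is the rigorous treatment of the degenerate set $\{\nabla u=0\}$ in the geometric inequality.
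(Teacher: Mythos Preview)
Your argument is correct but follows a different path from the paper. The paper's proof runs as follows: the quadratic energy bound from Theorem~\ref{main result 1.1} feeds into Lemma~\ref{lem number of nodal domains} (the nodal-domain count for $u_e$, imported from \cite{17}), which in the stable case forces $u_z$ not to change sign; then the Ambrosio--Cabr\'e theorem \cite{2} gives one-dimensionality. You instead plug the same quadratic energy bound directly into the Sternberg--Zumbrun inequality \eqref{S-Z inequality} with a logarithmic cutoff to force $|B(u)|^2\equiv0$, and then observe that a rotationally invariant, totally geodesic regular level set must have vertical normal (since the horizontal circle through any regular point with $r>0$ lies in a single connected component and the Gauss map is constant there), hence $u_r\equiv0$.

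Both routes hinge on the same crucial input, the sharp growth $\int_{B_R}|\nabla u|^2\leq CR^2$ supplied by axial symmetry. Your approach is more self-contained: it bypasses both the nodal-domain machinery and the black-box appeal to Ambrosio--Cabr\'e, and is essentially the two-dimensional stable De Giorgi argument promoted to $\R^3$ once the critical energy bound is available. The paper's approach, on the other hand, develops Lemma~\ref{lem number of nodal domains} because that nodal-domain framework is reused immediately afterward to handle the Morse index~$1$ case (Theorem~\ref{main result 3}), where your Sternberg--Zumbrun route would not apply.
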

Next we prove that
\begin{thm}\label{main result 3}
Any axially symmetric  solution of \eqref{Allen-Cahn} with Morse index $1$ in $\R^3$ has exactly two ends.
\end{thm}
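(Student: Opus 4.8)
The plan is to combine the structural information from Theorem~\ref{main result 1.1} with a Fourier decomposition in the angular variable, reducing the count of negative directions to a two–dimensional eigenvalue problem. Since Morse index $1$ is finite, Theorem~\ref{main result 1.1} already guarantees that $u$ has finitely many ends, say $Q$ of them, with nodal graphs $f_1<\cdots<f_Q$ outside a cylinder $\mathcal C_R$, and that the energy grows at most like $R^2$. Moreover $u$ must be nontrivial, since the only axially symmetric stable solutions are the constants and the one–dimensional profile $g$ (Theorem~\ref{main result 2}), all of which have Morse index $0$. It therefore remains to prove $Q\ge 2$ and $Q\le 2$.

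Writing points of $\R^3$ in cylindrical coordinates $(r,\theta,z)$ and expanding a test function as $\varphi=\sum_{m\in\mathbb Z}\varphi_m(r,z)e^{im\theta}$, the quadratic form $\mathcal Q$ diagonalizes as $\mathcal Q(\varphi)=\sum_m \mathcal Q_m(\varphi_m)$, where $\mathcal Q_m$ is the quadratic form of the weighted operator $L_m=-\partial_{rr}-\tfrac1r\partial_r-\partial_{zz}+\tfrac{m^2}{r^2}+W''(u)$ on the half–plane $H=\{(r,z):r>0\}$ with weight $r\,dr\,dz$. The modes $\pm m$ give identical forms, so every $m\neq 0$ contributes an even number of negative directions, while the term $m^2/r^2\ge 0$ makes higher modes more stable. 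Since the total Morse index is the odd number $1$, the whole of it must come from the axially symmetric mode: $L_0$ has exactly one negative eigenvalue and $L_m$ is nonnegative for all $m\neq 0$. The problem is thus reduced to showing that the two–dimensional operator $L_0$ on $(H,r\,dr\,dz)$ has index $\ge Q-1$.

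For this I would use the Jacobi field $w:=u_z$, which solves $L_0w=0$ and is axially symmetric. Across consecutive ends the sign of $w$ alternates, because $u$ changes from $+1$ to $-1$ (or vice versa) as $z$ increases through each graph $\Gamma_\alpha$; hence $w$ is forced to change sign between successive interfaces and, away from the interfaces, decays exponentially since $u\to\pm1$ there. The goal is to show that $w$ has at least $Q$ nodal domains in $H$, and then to run a Courant–type argument: restricting $w$ to its nodal domains produces mutually orthogonal, disjointly supported functions on which $\mathcal Q_0$ vanishes, so $0$ can be at most the $Q$-th eigenvalue of $L_0$; as $w$ changes sign it is not the ground state, which forces at least $Q-1$ strictly negative eigenvalues. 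Combined with the previous paragraph this yields $Q-1\le 1$, i.e. $Q\le 2$. The lower bound $Q\ge2$ is obtained by excluding $Q=0$ and $Q=1$: if $Q=0$ the nodal set is compact, so $u$ has finite energy and its interface would be a compact minimal hypersurface in the singular limit, which is impossible, forcing $u$ to be constant; if $Q=1$ the single end is asymptotically planar by the balancing (Pohozaev) identity, so $u$ reduces to the one–dimensional solution $g$ and is stable --- in either case contradicting Morse index $1$.

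The main obstacle is the spectral analysis of $L_0$ on the noncompact weighted half–plane. Two points require care. First, because $W''(u)\to W''(0)<0$ along the ends, the bottom of the essential spectrum of $L_0$ sits exactly at the threshold $0$, so the Courant nodal–domain bound and the counting of eigenvalues below $0$ must be justified for a threshold zero mode rather than by the classical compact theory; here the quadratic energy bound $\le CR^2$ of Theorem~\ref{main result 1.1} and the exponential decay of $w$ off the interface are what make the truncation errors and the spectral counting admissible. Second, and more delicately, one must establish the purely geometric fact that $w=u_z$ really has at least $Q$ nodal domains, i.e. that the alternating sign regions near the $Q$ ends do not merge into fewer components inside the cylinder or at small $r$; this requires controlling the global nodal set of $u_z$ rather than only its asymptotics, and is where the finite–Morse–index structure of $u$ must be used most carefully. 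The asymptotic balancing needed to rule out $Q=1$ is a comparatively routine, but still necessary, companion estimate.
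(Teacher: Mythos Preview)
Your angular Fourier reduction is correct and elegant: the parity argument does force the single negative direction to live in the $m=0$ sector, so the problem is indeed about the two–dimensional operator $L_0$ on $(H,r\,dr\,dz)$. But the Courant step that follows has a genuine gap which, even if patched, would not reach $Q\le 2$.

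The crux is your claim that $w=u_z$ has at least $Q$ nodal domains. This is not justified, and in the critical case $Q=3$ it is generally \emph{false} as a purely topological statement. With three ends the asymptotic sign pattern of $u_z$ is $(+,-,+)$; nothing prevents the two ``$+$'' end–neighborhoods from lying in the \emph{same} connected component of $\{u_z>0\}$, joined through the interior of the half–plane. In that scenario $u_z$ has exactly two nodal domains, and your Courant bound yields only $\mathrm{index}(L_0)\ge 2-1=1$, which is compatible with Morse index $1$ and does not exclude $Q=3$. (For $Q\ge 4$ the Jordan–curve obstruction does kick in and gives at least three nodal domains, but that still only recovers $Q\le 3$.) You flag this as the delicate point, but it is not a technicality to be overcome: it is a structural obstruction to getting $Q\le 2$ from $u_z$ alone.

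The paper takes a different route. It first proves (Lemma~\ref{lem number of nodal domains}, via the quadratic energy growth of Theorem~\ref{main result 1.1} and the method of \cite{16}) that for \emph{any} direction $e$, $u_e$ has at most $2N$ nodal domains. Applying this to $u_z$ gives only $Q\le 3$, exactly the ceiling your method would hit. The extra idea that closes the gap is to apply the same bound to $u_{x_1}$: because the map $(x_1,x_2)\mapsto(-x_1,x_2)$ exchanges the signs of $u_{x_1}=\cos\theta\,u_r$, a sign change of $u_r$ in $\{r>0\}$ would produce at least four nodal domains of $u_{x_1}$, contradicting $2N=2$. Hence $u_r$ has a strict sign (Lemma~\ref{lem sign of u_r}), so every level curve of $u$ is a graph over the $z$–axis; combined with Proposition~\ref{prop limit at infinity} (distances between consecutive ends diverge) this rules out three ends by an elementary monotonicity argument. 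Your Fourier decomposition does not see this, since $u_{x_1}$ lives in the $m=\pm 1$ sector which you have already discarded.

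A secondary issue: even where the nodal count is favorable, your Courant argument is formal. The restrictions $w|_{D_i}$ are not in $L^2(H,r\,dr\,dz)$ (the energy bound gives $\int_{B_R}u_z^2\lesssim R^2$, which diverges), and $0$ lies at the bottom of the essential spectrum of $L_0$. A logarithmic cutoff makes the truncation errors $o(1)$, but turning ``$\mathcal Q_0\to 0$ on a $Q$–dimensional space'' into ``at least $Q-1$ eigenvalues are strictly negative'' requires an additional unique–continuation/rigidity step that you have not supplied; this is essentially what underlies the bound $\le 2N$ in Lemma~\ref{lem number of nodal domains}, and the factor $2$ there reflects exactly this loss.
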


Two end solutions in $\R^3$ have been studied in detail in  Gui-Liu-Wei \cite{9}. They showed  that for each $k \in (\sqrt{2}, +\infty)$ there exists two-ended axially symmetric solutions whose  zero level set approximately look like  $ \{ z= k \log r \}$. Parallel to R. Schoen's result in minimal surfaces \cite{11}, one may ask the following natural question:

\medskip

\noindent
{\bf Conjecture:} All two-ended solutions to Allen-Cahn equation in $\R^3$ must be axially symmetric.

\medskip

\medskip

We introduce some notations used in the proof of  Theorems \ref{main result 1.2}-\ref{main result 3}.
Taking $(r,z)$ as coordinates in the plane, after an even extension to $\{r<0\}$, an axially symmetric function $u$ can be viewed as a smooth function defined on $\R^2$. Now \eqref{Allen-Cahn} is written as
\begin{equation}\label{equation}
u_{rr}+\frac{n-1}{r}u_r+u_{zz}=W^\prime(u).
\end{equation}
We use subscripts to denote differentiation, e.g. $u_z:=\frac{\partial u}{\partial z}$. A nodal domain of $u_z$ is a connected component of $\{u_z\neq0\}$. Sometimes we will identify various objects in $\R^{n+1}$ with the corresponding ones in the $(r,z)$-plane, if they have axial symmetry.

\medskip

To prove  Theorems \ref{main result 1.2}-\ref{main result 3} we  follow from a strategy used by the second and the third authors \cite{17}. One of the main difficulties is the possibility of  an infinite tree of nodal domains of $ \frac{\partial u}{\partial z} (r, z)$. Here we explore the decaying properties of the curvature to exclude this scenario.

\medskip

The remaining part of this paper is organized as follows. In Section \ref{sec curvature decay} we give a curvature decay estimate on level sets of $u$. This curvature estimate allows us to determine the topology and geometry of ends in Section \ref{sec geometry of ends}. In Section \ref{sec refined asymptotics} we show that interaction between different ends is modeled by a Toda system. The case $3\leq n \leq 9$ is analysed in Section \ref{sec high dim case}, while Section \ref{sec 3 dim case} is devoted to the proof of the $n=2$ case. Finally, Theorem \ref{main result 2} and Theorem \ref{main result 3} are proved in Section \ref{sec low Morse index case}.

\section{Curvature decay}\label{sec curvature decay}
\setcounter{equation}{0}
In this section we establish a technical result on curvature decay of level sets of $u$.

Let us first recall several results on stable solutions of \eqref{Allen-Cahn}. By \cite{12}, given a  domain $\Omega\subset\R^{n+1}$, the  condition that $\mathcal{Q}(\varphi)\geq0$ for all $\varphi\in C_0^\infty(\Omega)$ is equivalent to the following Sternberg-Zumbrun inequality
\begin{equation}\label{S-Z inequality}
  \int_\Omega |\nabla\varphi|^2|\nabla u|^2\geq\int_\Omega \varphi^2|B(u)|^2|\nabla u|^2, \quad \forall \varphi\in C_0^\infty(\Omega).
\end{equation}
Here
\begin{equation}\label{Sternberg-Z}
|B(u)|^2:=\frac{|\nabla^2u|^2-|\nabla|\nabla u||^2}{|\nabla
u|^2}=|A|^2+|\nabla_T\log|\nabla u||^2,
\end{equation}
where $A$ is the second
fundamental form of the level set of $u$ and $\nabla_T$ is the tangential derivative
along the level set.

The following \emph{Stable De Giorgi} theorem in dimension $2$ is well known, see \cite{8}.
\begin{thm}\label{thm stable De Giorgi}
Suppose $u$ is a stable solution of \eqref{Allen-Cahn} in $\R^2$. Then $u$ is one dimensional. In particular, $|B(u)|^2\equiv 0$.
\end{thm}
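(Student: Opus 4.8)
The plan is to feed a logarithmic cutoff into the Sternberg-Zumbrun inequality \eqref{S-Z inequality}, which is at our disposal precisely because $u$ is assumed stable. As a preliminary, note that the solutions under consideration satisfy $|u|\le 1$ by the maximum principle, so standard interior (Schauder) estimates on unit balls yield a global gradient bound $|\nabla u|\le C_0$ on $\R^2$. This uniform bound is exactly what makes the two-dimensional cutoff computation succeed.

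Next, for $R>1$ I would take the radial test function $\varphi=\varphi_R$ equal to $1$ on $B_R$, equal to $0$ outside $B_{R^2}$, and interpolating logarithmically so that $|\nabla\varphi_R|\le C/(\rho\log R)$ at distance $\rho\in[R,R^2]$ from the origin. Inserting $\varphi_R$ into \eqref{S-Z inequality} and using $|\nabla u|\le C_0$, the left-hand side is controlled by
\[
C_0^2\int_{B_{R^2}\setminus B_R}|\nabla\varphi_R|^2\le \frac{C}{(\log R)^2}\int_R^{R^2}\frac{d\rho}{\rho}=\frac{C}{\log R},
\]
which tends to $0$ as $R\to\infty$. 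Hence the right-hand side of \eqref{S-Z inequality} forces $\int_{B_R}|B(u)|^2|\nabla u|^2=0$ for every $R$, so $|B(u)|^2|\nabla u|^2\equiv 0$, and therefore $|B(u)|^2\equiv 0$ on the open set $\{\nabla u\neq 0\}$ (while trivially $|\nabla u|=0$ off it). This already delivers the ``in particular'' clause in the desired form.

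Finally I would translate the vanishing of $|B(u)|^2$ into one-dimensionality. By \eqref{Sternberg-Z}, on $\{\nabla u\neq0\}$ both $|A|\equiv0$ and $|\nabla_T\log|\nabla u||\equiv0$; equivalently the Hessian annihilates the tangent direction, $\nabla^2u\,T=0$, i.e. $\nabla u$ is constant along each connected level curve, which is therefore a straight line. The genuine obstacle, as I see it, is promoting this \emph{local} rigidity to \emph{global} constancy of the normal $\nu=\nabla u/|\nabla u|$, i.e. showing the straight level lines are mutually parallel rather than fanning out, while handling the critical set $\{\nabla u=0\}$. I would resolve this either by a connectedness/continuation argument along the foliation of $\{\nabla u\neq0\}$, or, following Ghoussoub-Gui \cite{8}, by observing that $\sigma:=|\nabla u|$ and each $u_i$ solve the \emph{same} linearized equation $\Delta\phi=W''(u)\phi$ (here using $|B|^2=0$ for $\sigma$), so that stability forces each ratio $u_i/\sigma$ to be constant; this pins down the direction of $\nabla u$ and shows $u$ depends on a single variable.
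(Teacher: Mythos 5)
A point of comparison first: the paper does not actually prove Theorem \ref{thm stable De Giorgi}; it quotes it as a known result with a reference to Ghoussoub--Gui \cite{8}. So your proposal has to be judged as a standalone reconstruction of that known proof, and the first half of it is correct and is indeed the standard argument: $|u|\le 1$ together with interior elliptic estimates gives $|\nabla u|\le C_0$ on $\R^2$, and feeding the logarithmic cutoff into the Sternberg--Zumbrun inequality \eqref{S-Z inequality} gives $\int_{B_{R^2}\setminus B_R}|\nabla\varphi_R|^2|\nabla u|^2\le C/\log R\to0$, hence $|B(u)|^2|\nabla u|^2\equiv0$ on $\R^2$. This is exactly the dimension-two computation the whole argument hinges on, and it does deliver the ``in particular'' clause.

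The second half is where you correctly sense the gap, but your proposed repair is flawed as written: you cannot take $\sigma=|\nabla u|$ as the denominator in the ratio argument, because nothing rules out zeros of $\nabla u$ a priori, the identity $\Delta|\nabla u|=W''(u)|\nabla u|+|B(u)|^2|\nabla u|$ is only valid on $\{\nabla u\neq0\}$, and the degenerate equation $\mathrm{div}\bigl(\sigma^2\nabla(u_i/\sigma)\bigr)=0$ cannot be integrated across the critical set where $\sigma$ vanishes. The standard fix --- and the actual argument of \cite{8, 2}, which this paper itself invokes again in the proof of Lemma \ref{lem Liouville} --- is to use stability to produce a \emph{globally positive} solution $\varphi$ of the linearized equation $\Delta\varphi=W''(u)\varphi$ on all of $\R^2$ (for instance as a limit of suitably normalized principal eigenfunctions on balls), and then to apply the Liouville theorem for $\mathrm{div}\bigl(\varphi^2\nabla(u_i/\varphi)\bigr)=0$ under the quadratic growth bound $\int_{B_R}u_i^2\le C_0^2|B_R|\le CR^2$. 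This yields $u_i=c_i\varphi$ for each $i$, so $\nabla u$ is everywhere parallel to a fixed direction and $u$ is one dimensional. Note that this route makes the first half of your argument logically redundant (one-dimensionality implies $|B(u)|^2\equiv0$ directly), although the cutoff computation remains the conceptual heart of why stability is rigid in $\R^2$. Your alternative suggestion of a ``continuation along the foliation'' would likewise have to confront the critical set $\{\nabla u=0\}$ and is not obviously salvageable as stated.
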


Using this theorem we show
\begin{prop}\label{prop limit at infinity}
Suppose $u$ is an axially symmetric solution of \eqref{equation} in $\R^{n+1}$, which is stable outside a cylinder $\mathcal{C}_R$. Then for any $R_i\to+\infty$ and $z_i\in\R$, after passing to a subsequence, $u_i(r,z):=u(R_i+r,z_i+z)$ converges to a one dimensional solution of \eqref{Allen-Cahn} in $C^2_{loc}(\R^2)$.
\end{prop}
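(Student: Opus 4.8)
The plan is to combine interior elliptic estimates with the stable De Giorgi theorem in the plane (Theorem~\ref{thm stable De Giorgi}). First I would note that $|u|\le 1$ by the maximum principle, so $W'(u)$ is bounded. The translated functions $u_i(r,z)=u(R_i+r,z_i+z)$ satisfy
\begin{equation*}
\partial_{rr}u_i+\frac{n-1}{R_i+r}\,\partial_r u_i+\partial_{zz}u_i=W'(u_i).
\end{equation*}
For $R_i$ large the drift coefficient $\frac{n-1}{R_i+r}$ is smooth and uniformly bounded on each fixed ball of the $(r,z)$-frame, so standard interior Schauder estimates give $C^{2,\alpha}_{loc}(\R^2)$ bounds on $u_i$ that are uniform in $i$. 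By Arzel\`a--Ascoli, after passing to a subsequence $u_i\to u_\infty$ in $C^2_{loc}(\R^2)$. Since $\frac{n-1}{R_i+r}\to 0$ uniformly on compact sets as $R_i\to+\infty$, the limit satisfies $\Delta u_\infty=W'(u_\infty)$ in $\R^2$; that is, $u_\infty$ is an entire solution of the two-dimensional Allen-Cahn equation.

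The crucial step is to transfer the stability of $u$ to $u_\infty$. Fix $\psi\in C_0^\infty(\R^2)$ with $\mathrm{supp}\,\psi\subset B_K(0)$ and set $\varphi_i(r,z):=\psi(r-R_i,z-z_i)$, regarded (via $r=\sqrt{x_1^2+\cdots+x_n^2}$) as an axially symmetric function on $\R^{n+1}$; this is legitimate because for $R_i>R+K$ its support lies in $\{r>R\}$, hence is bounded away from the axis and, moreover, sits outside $\mathcal{C}_R$. The stability of $u$ then applies to $\varphi_i$, and writing the quadratic form in $(r,z)$ coordinates with the $(n+1)$-dimensional weight $r^{n-1}$ (dropping the harmless constant $|S^{n-1}|$) gives
\begin{equation*}
\int_{\R^2}\Big[|\nabla\varphi_i|^2+W''(u)\varphi_i^2\Big]\,r^{n-1}\,dr\,dz\ge 0.
\end{equation*}
Changing variables $r=R_i+r'$, $z=z_i+z'$ and dividing by $R_i^{n-1}$ yields
\begin{equation*}
\int_{\R^2}\Big[|\nabla\psi|^2+W''(u_i)\psi^2\Big]\Big(1+\frac{r'}{R_i}\Big)^{n-1}\,dr'\,dz'\ge 0.
\end{equation*}
On $B_K(0)$ we have $(1+r'/R_i)^{n-1}\to 1$ uniformly and $W''(u_i)\to W''(u_\infty)$ by the $C^2_{loc}$ convergence, so letting $R_i\to+\infty$ gives
\begin{equation*}
\int_{\R^2}\Big[|\nabla\psi|^2+W''(u_\infty)\psi^2\Big]\,dr'\,dz'\ge 0.
\end{equation*}
As $\psi$ is arbitrary, $u_\infty$ is a stable solution of Allen-Cahn in $\R^2$.

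Finally, Theorem~\ref{thm stable De Giorgi} forces $u_\infty$ to be one dimensional, which completes the proof. I expect the stability transfer to be the main obstacle: the point is that the weight $r^{n-1}$ coming from the $(n+1)$-dimensional volume element degenerates to the constant $1$ under a \emph{translation} (rather than a rescaling) to infinity in $r$, so the surviving inequality is precisely the genuine two-dimensional stability condition needed to invoke Theorem~\ref{thm stable De Giorgi}. A secondary point that deserves care is the uniformity of the elliptic estimates in the moving frame, but this is harmless because the drift coefficient $\frac{n-1}{R_i+r}$ remains bounded on any fixed ball once $R_i$ is large.
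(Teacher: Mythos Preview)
Your argument is correct and reaches the same conclusion as the paper, but the route you take to transfer stability is genuinely different. The paper invokes the equivalent characterization of stability via a positive Jacobi field: it takes an axially symmetric positive solution $\varphi$ of the linearized equation outside $\mathcal{C}_R$, normalizes the translates $\varphi^i(r,z)=\varphi(R_i+r,z_i+z)/\varphi(R_i,z_i)$, and uses the Harnack inequality plus elliptic estimates to extract a limit $\varphi^\infty>0$ satisfying $\Delta\varphi^\infty=W''(u_\infty)\varphi^\infty$ in $\R^2$; the existence of this positive supersolution yields the stability of $u_\infty$. You instead work directly with the quadratic form: by plugging axially symmetric test functions $\psi(r-R_i,z-z_i)$ into $\mathcal{Q}$, writing the integral with the weight $r^{n-1}$, and observing that $(1+r'/R_i)^{n-1}\to 1$ uniformly on compacta, you pass the stability inequality to the limit without ever invoking the Jacobi field characterization or Harnack. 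Your method is slightly more elementary and self-contained; the paper's method has the mild advantage of producing an explicit positive Jacobi field on the limit, which is sometimes convenient for further use (e.g.\ moving-planes or monotonicity arguments), and it also sidesteps the need to check that the weighted stability inequality in $(r,z)$ coordinates is the right object. Both are clean; there is no gap in yours.
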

\begin{proof}
By standard elliptic estimates we can assume $u_i$ converge to $u_\infty$ in $C^2_{loc}(\R^2)$.  Passing to the limit in \eqref{equation} we see $u_\infty$ is a solution of \eqref{Allen-Cahn} in $\R^2$.

 Because $u$ is axially symmetric and stable outside $\mathcal{C}_R$, there exists an axially symmetric function $\varphi$ which is positive outside $\mathcal{C}_R$ such that
\[\varphi_{rr}+\frac{n-1}{r}\varphi_r+\varphi_{zz}=W^{\prime\prime}(u)\varphi, \quad \mbox{outside } \ \mathcal{C}_R.\]
Define
\[\varphi^i(r,z):=\frac{1}{\varphi(R_i,z_i)}\varphi(R_i+r,z_i+z).\]
For any $R>0$, it satisfies
\[\varphi^i_{rr}+\frac{n-1}{R_i+r}\varphi^i_r+\varphi^i_{zz}=W^{\prime\prime}(u_i)\varphi^i, \quad \mbox{in } \ B_R^2(0).\]
By definition, $\varphi^i(0)=1$ and $\varphi^i>0$. Then by Harnack inequality and standard elliptic estimates, after passing to a subsequence we can take a limit $\varphi^i\to\varphi^\infty$ in $C^2_{loc}(\R^2)$. Here $\varphi^\infty$ satisfies
\[\varphi^\infty_{rr}+\varphi^\infty_{zz}=W^{\prime\prime}(u_\infty)\varphi^\infty, \quad \varphi^\infty>0 \quad \mbox{in } \ \R^2.\]
Hence $u_\infty$ is a stable solution of \eqref{Allen-Cahn} in $\R^2$. By Theorem \ref{thm stable De Giorgi}, $u_\infty$ is one dimensional.
\end{proof}
\begin{coro}\label{coro 2.1}
Suppose $u$ is an axially symmetric solution of \eqref{equation}  in $\R^{n+1}$, which is stable outside a cylinder $\mathcal{C}_R$. For any $b\in(0,1)$, there exists an $R(b)>0$ such that $|\nabla u|\neq 0$ in $\{|u|<1-b\}\setminus\mathcal{C}_{R(b)}$. Moreover, if $x\in\{|u|<1-b\}\setminus\mathcal{C}_{R(b)}$ and $x\to\infty$,
\[|B(u)(x)|\to0.\]
\end{coro}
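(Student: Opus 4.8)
The plan is to prove both assertions by a blow-down argument based on Proposition \ref{prop limit at infinity}, after first recording how the $(n+1)$-dimensional quantity $|B(u)|$ decomposes for an axially symmetric $u=u(r,z)$. A direct computation of the Cartesian Hessian, using $\partial_{x_i}u=u_r x_i/r$ and $\partial_{x_ix_j}u=u_{rr}x_ix_j/r^2+(u_r/r)(\delta_{ij}-x_ix_j/r^2)$, gives $|\nabla^2u|^2=u_{rr}^2+2u_{rz}^2+u_{zz}^2+(n-1)u_r^2/r^2$, while $|\nabla|\nabla u||$ involves only the $(r,z)$-derivatives. Hence
\begin{equation}
|B(u)|^2=|\widetilde B(u)|^2+\frac{n-1}{r^2}\,\frac{u_r^2}{u_r^2+u_z^2},
\end{equation}
where $|\widetilde B(u)|^2$ is the two-dimensional Sternberg--Zumbrun quantity of $u$ viewed as a function on the $(r,z)$-plane. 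The point of this identity is that $|\widetilde B(u)|$ is invariant under translations of the $(r,z)$-plane, so it behaves well under the rescalings of Proposition \ref{prop limit at infinity}, whereas the extra term carries an explicit factor $r^{-2}$.

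For the first assertion I argue by contradiction. If it fails for some $b$, then for every $R'>0$ there is a point with $|u|<1-b$, $r>R'$ and $\nabla u=0$; taking $R'=i\to\infty$ produces $x_i=(r_i,z_i)$ with $r_i\to+\infty$, $|u(x_i)|<1-b$ and $\nabla u(x_i)=0$. Applying Proposition \ref{prop limit at infinity} with $R_i=r_i$, along a subsequence $u_i(r,z):=u(r_i+r,z_i+z)\to u_\infty$ in $C^2_{loc}(\R^2)$ with $u_\infty$ a one-dimensional solution of \eqref{Allen-Cahn}. Then $|u_\infty(0,0)|\le 1-b<1$, so $u_\infty$ is not a constant $\pm1$; but every non-constant one-dimensional solution is a rotation and translation of $g$, whose derivative never vanishes (recall $g'=\sqrt{2W(g)}>0$ on $(-1,1)$). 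Thus $\nabla u_\infty(0,0)\neq0$, contradicting $\nabla u_\infty(0,0)=\lim\nabla u_i(0,0)=0$.

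For the curvature decay, suppose instead there are $\delta>0$ and $x_i=(r_i,z_i)\to\infty$ in $\{|u|<1-b\}\setminus\mathcal C_{R(b)}$ with $|B(u)(x_i)|\ge\delta$. In the regime $r_i\to+\infty$ the argument closes cleanly: by the translation invariance of $|\widetilde B|$ one has $|\widetilde B(u)(x_i)|=|\widetilde B(u_i)(0,0)|\to|\widetilde B(u_\infty)(0,0)|$, and since $u_\infty$ is again a non-constant one-dimensional solution its planar level sets are straight lines along which $|\nabla u_\infty|$ is constant, so $|\widetilde B(u_\infty)|\equiv0$; here the $C^2_{loc}$-convergence is legitimate because $\nabla u_\infty(0,0)\neq0$, exactly as above. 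Meanwhile the extra term is $\le(n-1)/r_i^2\to0$. By the displayed identity $|B(u)(x_i)|\to0$, contradicting $|B(u)(x_i)|\ge\delta$.

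The main obstacle is the complementary regime, in which $r_i$ stays bounded while $|z_i|\to+\infty$: there Proposition \ref{prop limit at infinity} does not apply, and the $r^{-2}$ term need not be small. To exclude it I would shift only in the axial variable, $v_i(r,z):=u(r,z_i+z)$, obtaining a limit $v_\infty$ that is an axially symmetric entire solution, stable outside $\mathcal C_R$, with $|v_\infty|<1$ at a point near the axis with $r_*>R$. The key should be that a transition layer persisting near a cylinder $\{r\approx r_*\}$ over an unbounded $z$-interval is incompatible with stability: testing the Sternberg--Zumbrun inequality \eqref{S-Z inequality} with $\varphi=\eta(z)$ a cutoff of length $L$ makes the right-hand side $O(1/L)$, while the rotational contribution $(n-1)u_r^2/(r^2)$ to $|B|^2$ forces the left-hand side to grow like $L$, a contradiction as $L\to\infty$; hence $u\to\pm1$ near the axis as $|z|\to\infty$ and this regime does not occur. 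Making this stability exclusion of a long, nearly cylindrical interface rigorous (in particular controlling the geometry of the layer of $v_\infty$) is the step I expect to be the most delicate.
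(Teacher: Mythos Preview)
Your argument for both assertions in the regime $r_i\to+\infty$ is correct and is precisely the paper's intended proof: the corollary is stated without proof as an immediate consequence of Proposition~\ref{prop limit at infinity}, and your compactness-plus-contradiction extraction is exactly how one reads it off. The decomposition
\[
|B(u)|^2=|\widetilde B(u)|^2+\frac{n-1}{r^2}\,\frac{u_r^2}{u_r^2+u_z^2}
\]
is a clean way to make the second assertion precise, separating the planar Sternberg--Zumbrun quantity (which is translation-invariant in $(r,z)$ and vanishes on the one-dimensional limit by Theorem~\ref{thm stable De Giorgi}) from the rotational term that carries an explicit $r^{-2}$.

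Your ``main obstacle'' is not an obstacle at all, but a misreading of the statement. Since the corollary is deduced directly from Proposition~\ref{prop limit at infinity}, which is formulated only for $R_i\to+\infty$, the phrase ``$x\to\infty$'' here means $r\to\infty$, uniformly in $z$. This is also the only case ever used downstream: Theorem~\ref{thm curvature decay} gives bounds depending on $r$ alone, and all of Section~\ref{sec geometry of ends} concerns the region $\{r>R_2\}$. Under the hypothesis ``stable outside $\mathcal C_R$'', which is $z$-translation invariant and at this stage of the paper does \emph{not} exclude infinitely many ends (the index $\alpha$ in Section~\ref{sec geometry of ends} runs over $\mathbb Z$ a priori), a $z$-shift limit $v_\infty$ may perfectly well satisfy $|v_\infty(r_*,0)|<1-b$ with $|B(v_\infty)(r_*,0)|\neq0$ for moderate $r_*>R(b)$; the stronger decay you are trying to establish is neither asserted nor needed. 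Your proposed Sternberg--Zumbrun exclusion of a long near-cylindrical interface is therefore aimed at a statement the paper does not make, and---as you yourself note---would require substantial additional control on the geometry of $v_\infty$. Drop that paragraph and your proof is complete.
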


The main technical tool we need in this paper is the following decay estimate on $|B(u)|^2$.
\begin{thm}\label{thm curvature decay}
Suppose $u$ is an axially symmetric solution of \eqref{equation}  in $\R^{n+1}$, which is stable outside a cylinder $\mathcal{C}_R$. For any $b\in(0,1)$, there exists a constant $C(b)$ such that in $\{|u|<1-b\}\setminus \mathcal{C}_{R(b)}$,
\[|B(u)(r,z)|^2\leq C(b)r^{-2}\]
and
\[|H(u)(r,z)|\leq C(b)r^{-2}\left(\log\log r\right)^2.\]
\end{thm}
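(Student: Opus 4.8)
The plan is to prove the two bounds separately, treating the estimate on $|B(u)|^2$ first, since the mean curvature bound will be bootstrapped from it. Throughout I work in the $(r,z)$-plane, where $u$ solves \eqref{equation} and the level set of $u$ in $\R^{n+1}$ is the surface of revolution generated by the planar level curve. Writing $\nu$ for the unit normal and $\kappa$ for the planar curvature of that curve, a direct computation gives
$$|A|^2 = \kappa^2 + (n-1)\Big(\frac{u_r}{r|\nabla u|}\Big)^2, \qquad H = \kappa + (n-1)\frac{u_r}{r|\nabla u|},$$
so that $|B(u)|^2 = \kappa^2 + (n-1)(u_r/(r|\nabla u|))^2 + |\nabla_T\log|\nabla u||^2$. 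The spherical term is automatically $O(r^{-2})$, so the content of the first estimate is that the planar curvature $\kappa$ and the tangential variation of $\log|\nabla u|$ are each $O(r^{-1})$, while the content of the second is a near-cancellation in $H$ between $\kappa$ and the spherical mean curvature, both of which are a priori only $O(r^{-1})$.

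To prove $|B(u)|^2\le C(b)r^{-2}$ I would argue by contradiction using a blow-up. Suppose $r|B(u)|$ is unbounded on $\{|u|<1-b\}\setminus\mathcal{C}_{R(b)}$. By Corollary \ref{coro 2.1} together with interior elliptic estimates, $|B(u)|$ is bounded on this set, so a point-selection argument produces points $p_i=(r_i,z_i)$ with $r_i\to\infty$, $r_i|B(u)(p_i)|\to\infty$, and $|B(u)|$ comparable to $|B(u)(p_i)|$ on the disc of radius $\tfrac12 r_i$ about $p_i$. Translating, $v_i(r,z):=u(r_i+r,z_i+z)$ solves $\Delta v_i = W'(v_i)-\frac{n-1}{r_i+r}(v_i)_r$, and by Proposition \ref{prop limit at infinity} (and the accompanying positive test function) $v_i\to v_\infty$ in $C^2_{loc}$, where $v_\infty$ is a stable, hence by Theorem \ref{thm stable De Giorgi} one-dimensional, solution in $\R^2$; in particular the level curves flatten. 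The rate is captured by linearization: writing $v_i=g_i+\psi_i$ with $g_i$ the nearest translate of the one-dimensional profile, $\psi_i$ solves a linearized equation whose inhomogeneity $\frac{n-1}{r_i+r}(v_i)_r$ is $O(r_i^{-1})$, so the rescaled deviation $r_i\psi_i$ converges to a bounded solution of the limiting linear problem, forcing $r_i^2|B(u)(p_i)|^2$ to stay bounded — contradicting the choice of $p_i$. The main obstacle here is the non-degeneracy needed to control $r_i\psi_i$: one must choose the offset and direction of $g_i$ so as to annihilate the translational kernel of $-\Delta+W''(g_\infty)$ and to rule out a growing homogeneous component of the deviation.

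For the mean curvature bound I would start from the identity obtained by decomposing $\Delta u=u_{\nu\nu}+H|\nabla u|$ in $\R^{n+1}$ and using \eqref{Allen-Cahn}, namely $H|\nabla u|=W'(u)-u_{\nu\nu}$, so that $H$ measures the deviation of $u$ from the one-dimensional profile in the normal direction. Differentiating the equation shows that the relevant geometric quantities — $H$ together with $\langle\nu,e_z\rangle=u_z/|\nabla u|$, which is a normalized Jacobi field since $u_z$ solves the linearized equation $\Delta u_z=W''(u)u_z$ — satisfy a Jacobi-type equation along the level curve. Feeding in $|B(u)|^2\le C r^{-2}$ from the first part to control the coefficients and integrating this equation along the profile out to radius $r$ yields the refined cancellation between $\kappa$ and the spherical contribution. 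The hard part is that the crude bound $|B|\le C/r$, integrated over the profile whose relevant length out to radius $r$ is itself of order $r$, only gives logarithmic control of $|\nabla u|$ and of $H$; recovering the stated rate requires bootstrapping this estimate, and it is precisely the controlled accumulation of errors in this iteration — one logarithm from the integration and a second from the bootstrap — that produces the $(\log\log r)^2$ loss.
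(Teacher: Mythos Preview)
Your overall strategy---blow up, observe that the limiting problem is essentially two-dimensional because the radial term $\frac{n-1}{r}u_r$ scales away, then invoke the stable De Giorgi theorem---is exactly the outline the paper gives. The paper does not prove the theorem in detail either; it simply states that the proof follows \cite{17} and reduces to the second-order estimate of \cite{18}, with the single observation that the blow-up limit is a two-dimensional problem regardless of $n$.

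Where your proposal diverges, and where there is a genuine gap, is in the mechanism for extracting the \emph{rate}. You translate without rescaling, write $v_i=g_i+\psi_i$, note that the inhomogeneity is $O(r_i^{-1})$, and then assert that $r_i\psi_i$ converges to a bounded solution of the linearized problem. But boundedness of $r_i\psi_i$ is precisely what is at stake: a priori you only know $\psi_i\to 0$, and without an independent estimate the normalized sequence could blow up. You correctly flag this as ``the main obstacle,'' but you do not resolve it---and resolving it is the entire content of the second-order estimate in \cite{18} that the paper invokes. In that reference the bound is obtained not by a single contradiction/compactness step but by an iterative scheme on the error $\phi=u-g_\ast$ (of the type appearing later in Lemma~\ref{lem estimate on error function 1}), and the curvature decay is read off from $\|\phi\|_{C^{2,1/2}}$. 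Your point-selection on a disc of radius $r_i/2$ does not by itself supply the normalization needed for that compactness argument.

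For the mean-curvature bound your route is genuinely different from the paper's---you propose to integrate a Jacobi-type relation along the level curve, whereas the paper again defers to the second-order estimate in \cite{18}---but the sketch is too loose to be correct as stated. In particular, your account of the $(\log\log r)^2$ loss (``one logarithm from the integration and a second from the bootstrap'') does not produce $(\log\log r)^2$; a naive integration of an $O(r^{-1})$ quantity over a profile of length $\sim r$ yields a $\log r$ loss, not a double logarithm. In \cite{18} the $(\log\log r)^2$ arises from a specific iteration (cf.\ \cite[Proposition~10.1]{18} and the bound $M(r)\lesssim r^{-2}(\log\log r)^2$ quoted after \eqref{first bound}), where one repeatedly halves the error over intervals of length $\sim\log r$; the doubly-logarithmic factor counts the number of such steps. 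Your Jacobi-field idea may be workable, but to close it you would need to reproduce that iteration rather than a single integration-plus-bootstrap.
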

In the above $H(u)(r,z)$ denotes the mean curvature of the level set $\{u=u(r,z)\}$ at the point $(r,z)$.
The proof of this theorem is similar to the  two dimensional case in \cite{17}. By a blow up method, it is  reduced to the second order estimate established in \cite{18}. Note that here we do not impose any condition on $n$, because as in the proof of Proposition \ref{prop limit at infinity}, the limiting problem after blow up is essentially a two dimensional problem and then the estimate in \cite{18} is applicable.

\section{Geometry of ends}\label{sec geometry of ends}
\setcounter{equation}{0}

In this section $u$ denotes an axially symmetric solution of \eqref{equation}  in $\R^{n+1}$, $n\geq 2$, which is stable outside a cylinder $\mathcal{C}_R$.
Here and henceforth, a constant $b\in(0,1)$ will be fixed and notations in the previous section will be kept. Take a constant $R_1 >R(b)$ so that it satisfies
 \begin{equation}\label{3.1}
C(b)R_1 ^{-2}\left(\log\log R_1 \right)^2<R_1^{-1}.
\end{equation}

 By Theorem \ref{thm curvature decay}, $\{u=0\}\setminus \mathcal{C}_{R_1 }=\cup_\alpha \Gamma_\alpha$, where $\alpha\in\mathcal{A}$ is the index. For each $\alpha$, $\Gamma_\alpha$ is a connected smooth embedded hypersurface with or without boundary.
% By a small perturbation of $R(b)$, we can assume $\partial\Gamma_\alpha\subset\partial\mathcal{C}_{R(b)}$ (if it is non-empty) and $\Gamma_\alpha$ intersects $\partial\mathcal{C}_{R(b)}$ transversally.
Furthermore, $\Gamma_\alpha\cap\Gamma_\beta=\emptyset$ if $\alpha\neq\beta$. Finally, since $u$ is axially symmetric, for each $\alpha\in\mathcal{A}$, $\Gamma_\alpha$ is also axially symmetric. As a consequence, $\Gamma_\alpha$ can be viewed as a smooth curve in the $(r,z)$ plane.

Viewing  $\Gamma_\alpha$ as a smooth curve in the $(r,z)$ plane and $r$ as a function defined on $\Gamma_\alpha$, we have
\begin{lem}\label{lem critical points of r}
Every critical point of $r$ in the interior of $\Gamma_\alpha$ is a strict local minima.
\end{lem}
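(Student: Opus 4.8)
The plan is to reduce the statement to a sign condition on $u_{zz}/u_r$ at a critical point, and then read off that sign from the curvature decay estimate of Theorem \ref{thm curvature decay}. First I would locate the critical points. Since $\Gamma_\alpha\subset\{u=0\}\setminus\mathcal{C}_{R_1}$ and $R_1>R(b)$, Corollary \ref{coro 2.1} gives $|\nabla u|\neq0$ along $\Gamma_\alpha$, so the tangent to $\Gamma_\alpha$ is orthogonal to $(u_r,u_z)\neq0$. A point $p=(r_0,z_0)$ is a critical point of $r$ restricted to $\Gamma_\alpha$ exactly when this tangent is vertical, i.e. when $u_z(p)=0$; at such a point $u_r(p)\neq0$. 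Using $u_r\neq0$ and the implicit function theorem I would write $\Gamma_\alpha$ near $p$ as a graph $r=\rho(z)$ with $u(\rho(z),z)\equiv0$. Differentiating twice and using $\rho'(z_0)=-u_z/u_r=0$ gives $\rho''(z_0)=-u_{zz}/u_r$ at $p$, so the claim reduces to showing $u_{zz}/u_r<0$ at every such $p$.

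Next I would bring in the geometry of the level set as a hypersurface of revolution. A direct computation of $H=\mathrm{div}\,(\nabla u/|\nabla u|)$ in the axially symmetric setting splits the mean curvature into the planar curvature of the profile curve plus the contribution of the $n-1$ rotational directions:
\[H=\kappa+\frac{n-1}{r}\,\frac{u_r}{|\nabla u|},\qquad \kappa:=\partial_r\!\Big(\frac{u_r}{|\nabla u|}\Big)+\partial_z\!\Big(\frac{u_z}{|\nabla u|}\Big).\]
Evaluating at $p$, where $u_z=0$ and $|\nabla u|=|u_r|$, yields $\kappa=u_{zz}/|u_r|$ and $\frac{n-1}{r}\frac{u_r}{|\nabla u|}=\frac{n-1}{r}\,\mathrm{sign}(u_r)$, so that
\[\frac{u_{zz}}{u_r}=\kappa\,\mathrm{sign}(u_r)=\Big(H-\frac{n-1}{r}\,\mathrm{sign}(u_r)\Big)\mathrm{sign}(u_r)=H\,\mathrm{sign}(u_r)-\frac{n-1}{r}.\]

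Finally I would invoke the decay estimate. By Theorem \ref{thm curvature decay} and the choice \eqref{3.1} of $R_1$ (together with the fact that the bound $C(b)r^{-2}(\log\log r)^2$ stays below $r^{-1}$ for all $r\geq R_1$, not just at $R_1$), at every point of $\Gamma_\alpha$ one has $|H|\leq C(b)r^{-2}(\log\log r)^2<r^{-1}\leq\frac{n-1}{r}$, using $n\geq2$. Hence $H\,\mathrm{sign}(u_r)\leq|H|<\frac{n-1}{r}$, which forces $u_{zz}/u_r<0$, i.e. $\rho''(z_0)>0$, so $p$ is a strict local minimum of $r$.

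I expect the only genuine subtlety to be the sign bookkeeping in the mean-curvature splitting, together with the observation that only $|H|$ enters the final comparison, so the precise sign convention adopted for $H$ in Theorem \ref{thm curvature decay} is irrelevant. The underlying mechanism is transparent: the level set is so nearly minimal that at a vertical-tangent point the rotational term $\frac{n-1}{r}$ dominates $H$ and dictates the sign of the profile curvature, forcing the curve to bend like a catenoid neck, where $r$ attains a strict minimum; a local maximum would instead require $\kappa$ and the rotational term to have the same sign, making $|H|\geq\frac{n-1}{r}$, which is excluded.
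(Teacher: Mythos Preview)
Your argument is correct and follows essentially the same route as the paper: write $\Gamma_\alpha$ locally as a graph $r=\rho(z)$ near a critical point, relate $\rho''(z_0)$ to the mean curvature of the level set as a hypersurface of revolution, and use the decay bound on $|H|$ from Theorem \ref{thm curvature decay} together with \eqref{3.1} to force the rotational contribution $(n-1)/r$ to dominate. The only cosmetic difference is that the paper argues by contradiction (assuming $\rho''\le 0$ and deducing $H\ge 1/r_\ast$), whereas you compute $\rho''>0$ directly; the underlying mechanism is identical.
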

\begin{proof}
Assume by the contrary, there exists a point $(r_\ast,z_\ast)$ in the interior of one $\Gamma_\alpha$, which is a critical point of $r$ but not a strict local minima. By Corollary \ref{coro 2.1}, in a neighborhood of  $(r_\ast,z_\ast)$, $\Gamma_\alpha=\{r=f_\alpha(z)\}$. By our assumptions, $f_\alpha(z_\ast)=r_\ast$, $f_\alpha^\prime(z_\ast)=0$ and $f_\alpha^{\prime\prime}(z_\ast)\leq 0$. Hence
\[ H_{\Gamma_\alpha}(r_\ast,z_\ast)\geq\frac{1}{r_\ast}.\]
In view of \eqref{3.1}, this is a contradiction with Theorem \ref{thm curvature decay}.
\end{proof}

Since $\Gamma_\alpha$ is a connected smooth curve with end points (if there are) in $\partial\mathcal{C}_{R_1}$, by this lemma we see there is no local maxima and at most one local minima  of $r$ in the interior of $\Gamma_\alpha$. There are two cases:
\begin{itemize}
\item[{\bf  Type I.}] $\Gamma_\alpha$ is diffeomorphic to $[0,+\infty)$ and it has exactly one end point on $\partial\mathcal{C}_{R_1}$;
\item[{\bf Type II.}] $\Gamma_\alpha$ is diffeomorphic to $(-\infty,+\infty)$ and its boundary is empty.
\end{itemize}

If $\Gamma_\alpha$ is of type I, $r$ is a strictly increasing function with respect to a parametrization of $\Gamma_\alpha$. Hence it can be represented by the graph $\{z=f_\alpha(r)\}$, where $f_\alpha\in C^4[R_1,+\infty)$. (Higher order regularity on $f_\alpha$ follows by applying the implicit function theorem to $u$.)

If $\Gamma_\alpha$ is of type II, there exists a point $(R_\alpha,z_\alpha)$, which is the unique  minima of $r$ on $\Gamma_\alpha$.
As in Type I case, $\Gamma_\alpha\setminus\{(R_\alpha,z_\alpha)\}=\Gamma_\alpha^+\cup\Gamma_\alpha^-$, where $\Gamma_\alpha^\pm$ can be represented by two graphs $\{z=f_\alpha^\pm(r)\}$. Here $f_\alpha^+>f_\alpha^-$ on $(R_\alpha,+\infty)$ and $f_\alpha^+(R_\alpha)=f_\alpha^-(R_\alpha)=z_\alpha$.

\begin{prop}\label{prop nonexistence of type II}
There exists a constant $R_2>R_1$ such that for any type II end $\Gamma_\alpha$, it holds that $R_\alpha< R_2$.
\end{prop}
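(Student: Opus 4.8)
The plan is to argue by contradiction using a homothetic blow-down at the necks. Suppose no such $R_2$ exists; then there is a sequence of type II ends $\Gamma_{\alpha_i}$ whose necks $(R_{\alpha_i},z_{\alpha_i})$ satisfy $R_{\alpha_i}\to+\infty$. I would rescale about the axis and translate vertically, setting $\hat u_i(r,z):=u\big(R_{\alpha_i}r,\ z_{\alpha_i}+R_{\alpha_i}z\big)$, so that the rescaled neck sits at $(1,0)$, both branches of the rescaled profile curve $\hat\Gamma_i$ run off to $r=+\infty$, and the cylinder $\mathcal C_R$ is pushed onto the axis $\{r=0\}$. With $\varepsilon_i:=R_{\alpha_i}^{-1}\to0$ the $\hat u_i$ solve a singularly perturbed Allen--Cahn equation, and the geometry of the problem is concentrated on $\hat\Gamma_i$.

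The first step is to identify the limit of $\hat\Gamma_i$ as a catenoid. By Theorem \ref{thm curvature decay} the second fundamental form obeys $|A|\le \sqrt{C(b)}\,r^{-1}$, so on any annulus $\{1/2\le r\le M\}$ the rescaled curvature satisfies $|\hat A|=R_{\alpha_i}|A|\le \sqrt{C(b)}$; hence $\hat\Gamma_i$ has locally uniformly bounded curvature and, after passing to a subsequence, converges in $C^{1,\beta}_{loc}$ (and then smoothly, via the mean curvature equation) to a limit curve $\hat\Gamma_\infty$ away from the axis. Since the rescaled mean curvature satisfies $|\hat H|=R_{\alpha_i}|H|\le C(b)R_{\alpha_i}^{-1}(\log\log R_{\alpha_i})^2\to0$, the hypersurface of revolution generated by $\hat\Gamma_\infty$ is minimal. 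Crucially, the mean curvature identity at the neck, namely $H=-g''(z_{\alpha_i})+(n-1)R_{\alpha_i}^{-1}$ for the local representation $r=g(z)$, combined with the bound on $H$, forces $g''(z_{\alpha_i})=(n-1)R_{\alpha_i}^{-1}+o(R_{\alpha_i}^{-1})$, so the rescaled neck curvature converges to $n-1\neq0$. Thus $\hat\Gamma_\infty$ is the unique solution of the minimal profile ODE with $g(0)=1$, $g'(0)=0$, $g''(0)=n-1$, i.e. a genuine (non-flat) $n$-dimensional catenoid with neck radius $1$.

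The second step transfers stability to the limit to reach a contradiction. Because $\hat\Gamma_\infty$ lies in $\{r\ge1\}$, every compact piece of the limiting catenoid is, for $i$ large, contained in the region where $u$ is stable; passing the Sternberg--Zumbrun inequality \eqref{S-Z inequality} to the limit (using Corollary \ref{coro 2.1} so that $|\nabla\hat u_i|\neq0$ near $\hat\Gamma_i$, and the bound $|B(u)|^2\ge|A|^2$ from \eqref{Sternberg-Z}) shows that the limit catenoid is a \emph{stable} minimal hypersurface. This is the desired contradiction, since the complete catenoid is unstable in every dimension: the Jacobi field $\langle\nu,e_z\rangle$ generated by vertical translations equals $-g'/\sqrt{1+(g')^2}$ along the profile, so it vanishes exactly on the neck $\{z=0\}$ and has a strict sign on each of $\{z>0\}$ and $\{z<0\}$. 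Hence $0$ is the first Dirichlet eigenvalue of the Jacobi operator $\Delta_\Sigma+|A|^2$ on the half-catenoid, and domain monotonicity produces a compactly supported variation with strictly negative second variation; pulling it back to $\hat u_i$, and hence to $u$, violates stability outside $\mathcal C_R$.

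The main obstacle is the passage to the limit in the second step. Because the transition layers of $\hat u_i$ shrink to width $\varepsilon_i\to0$, identifying the limit of the Allen--Cahn quadratic form $\mathcal Q$ with the minimal-surface stability functional $\int(|\nabla_\Sigma\varphi|^2-|A|^2\varphi^2)$ must be carried out carefully: one localizes \eqref{S-Z inequality} to an $O(\varepsilon_i)$-neighborhood of $\hat\Gamma_i$ and controls the nonnegative error coming from the $|\nabla_T\log|\nabla u||^2$ term in \eqref{Sternberg-Z}. The second delicate point is the non-degeneracy of the blow-down, namely that one obtains a true catenoid rather than a flat hyperplane; it is precisely here that the sharper decay $|H|\le C(b)r^{-2}(\log\log r)^2$, rather than the weaker $|A|\le C(b)r^{-1}$, is indispensable, since it is what pins the limiting neck curvature to the nonzero value $n-1$.
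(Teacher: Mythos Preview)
Your overall strategy coincides with the paper's: rescale about the neck, identify the limit as the $n$-catenoid, and contradict the stability inherited from $u$. Your neck-curvature computation showing $\hat g''(0)\to n-1$ is a nice explicit way to rule out a planar limit; the paper instead simply observes that the rescaled profile curves retain their vertical tangent at $(1,0)$ and two-branch structure, which already forces the limit to be the standard catenoid rather than a hyperplane, so the sharper $|H|$ decay is convenient but not strictly indispensable at this step. For the instability of the catenoid the paper just invokes Tam--Zhou; your sign-changing Jacobi field $\langle\nu,e_z\rangle$ argument is the right heuristic, but note that $\langle\nu,e_z\rangle\to\pm1$ at infinity (it does not decay), so turning this into a compactly supported destabilizing variation via ``domain monotonicity on the half-catenoid'' requires more care than you indicate.

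The one substantive difference is in the stability passage, which you correctly flag as the main obstacle. Rather than passing the Sternberg--Zumbrun inequality to the limit directly, the paper splits into two cases according to whether other components of $\{\hat u_i=0\}$ accumulate on the limit catenoid: if another sheet converges as well, one builds a positive Jacobi field on $\Sigma_\infty$ from the (signed) distance between sheets, following Chodosh--Mantoulidis; if $\hat\Gamma_i$ is the only sheet in a fixed tubular neighborhood, one has a local energy bound and can quote Tonegawa's theorem that stable Allen--Cahn interfaces converge to stable minimal hypersurfaces. Your direct S--Z route would also go through (both sides of \eqref{S-Z inequality} localize to the transition layers and pick up the same multiplicity factor), but you do not address the multi-sheet possibility at all, and ``localizing to an $O(\varepsilon_i)$-neighborhood of $\hat\Gamma_i$'' is the wrong picture---the test function should live on a \emph{fixed} neighborhood of $\Sigma_\infty$, with the measure $|\nabla\hat u_i|^2\,dx$ doing the localization. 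The paper's dichotomy sidesteps precisely this bookkeeping.
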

\begin{proof}
Assume by the contrary, there exists a sequence of type II ends $\Gamma_k$ such that $R_k\to+\infty$.

By Theorem \ref{thm curvature decay}, the rescalings $\Sigma_k:=R_k^{-1}\left[\Gamma_k-(0,z_k)\right]$ have uniformly bounded curvatures and their mean curvatures converge to $0$ uniformly. By standard elliptic estimates, after passing to a subsequence of $k$, $\Sigma_k$ converges to an axially symmetric, smooth minimal hypersurface $\Sigma_\infty$. Moreover, there exist two functions $f^\pm_\infty\in C^2((1,+\infty))$ such that
\[\Sigma_\infty\setminus\{(1,0)\}=\left\{(r,z): z=f_\infty^\pm(r)\right\}.\]
Hence $\Sigma_\infty$ is the standard catenoid. By \cite{13}, it is unstable. (Indeed, its Morse index is exactly $1$.)

On the other hand, we claim that $\Sigma_\infty$ inherits the stability from $u$, thus arriving at a contradiction. Indeed, let
$u_k(r,z):=u(R_kr, R_k (z_k+z))$. It is a solution of the singularly perturbed Allen-Cahn equation
\[\Delta u_k=R_k^2 W^\prime(u_k).\]
Since $u$ is stable outside $\mathcal{C}_{R_1}$, $u_k$ is stable outside $\mathcal{C}_{R_1/R_k}$. Note that $\Sigma_k$ is a connected component of $\{u_k=0\}$ and it is totally located outside $\mathcal{C}_1$. Therefore we can use the method in   \cite{3} to deduce the stability of $\Sigma_\infty$. There are two cases.
\begin{itemize}
  \item Suppose there exists another connected component of $\{u_k=0\}$, denoted by $\widetilde{\Sigma}_k$, also converging to $\Sigma_\infty$ in a ball $B_r(p)$ for some $r>0$ and $p\in\Sigma_\infty$. By Theorem \ref{thm curvature decay}, $\widetilde{\Sigma}_k$ enjoys the same regularity as for $\Sigma_k$. Hence  by the axial symmetry of $\widetilde{\Sigma}_k$ and the uniqueness of catenoid, $\widetilde{\Sigma}_k$ converges to $\Sigma_\infty$ everywhere. In this case we can construct a positive Jacobi field on $\Sigma_\infty$ as in \cite[Theorem 4.1]{3}, which implies the stability of $\Sigma_\infty$
  \item Suppose there is only one such a component in a fixed neighborhood $\mathcal{N}$ of $\Sigma_\infty$. Since $\Sigma_\infty\subset\{r\geq 1\}$, we can take $\mathcal{N}\subset\{r>1/2\}$. Hence $u_k$ is stable in $\mathcal{N}$. Then for any ball $B_r(p)$ with $r>0$ and $p\in \Sigma_\infty$, there exists a constant $C>0$ such that
\[\int_{\mathcal{N}\cap B_r(p)}\left[\frac{1}{2R_k}|\nabla u_k|^2+ R_k W(u_k)\right]\leq C.\]
Because $u_k$ is stable in $\mathcal{N}\cap B_r(p)$, the stability of $\Sigma_\infty$  follows by applying the main result of \cite{14}.
\end{itemize}
The contradiction implies that $R_\alpha$ is bounded and the proposition is proven.
\end{proof}

Now $\{u=0\}\setminus\mathcal{C}_{R_2}=\cup_\alpha\Gamma_\alpha$, where each $\Gamma_\alpha$ is of Type I. Denote $\Gamma_\alpha\cap\{r=R_2\}=\{(R_2,z_\alpha)\}$. By Proposition \ref{prop limit at infinity}, after perhaps enlarging $R_2$, there is a positive lower bound for $|z_\alpha-z_\beta|$, $\forall \alpha\neq\beta$. Hence we can take the index $\alpha$ to be integers and we  will relabel indices  so that $z_\alpha<z_\beta$ for any $\alpha<\beta$. Furthermore,  we have  $f_\alpha<f_\beta$ in $[R_2,+\infty)$ for any $\alpha<\beta$.

Define the functions
\begin{equation*}
\begin{aligned}
&f_\alpha^+(r):=\frac{f_\alpha(r)+f_{\alpha+1}(r)}{2}, \quad \mbox{for } r\in[R_2,+\infty),\\
&f_\alpha^-(r):=\frac{f_\alpha(r)+f_{\alpha-1}(r)}{2}, \quad \mbox{for } r\in[R_2,+\infty).
\end{aligned}
\end{equation*}
By definition, $f_\alpha^+=f_{\alpha+1}^-$. In the above we take the convention that $f_\alpha^+(r)=+\infty$ (or $f_\alpha^-(r)=-\infty$) if there does not exist any other end lying above (respectively below) $\Gamma_\alpha$. Let
  \[\mathcal{M}_\alpha:=\left\{(r,z): f_\alpha^-(r)<z<f_\alpha^+(r), \ r>R_2\right\}.\]

The following result describes the asymptotics of $f_\alpha$ as $r\to+\infty$.
\begin{lem}\label{lem decay rate of derivatives I}
There exists a constant $C$ such that for each $\alpha$, in $[R_2,+\infty)$ we have
\begin{equation}\label{decay 1}
\begin{cases}
   & |f_\alpha(r)-f_\alpha(R_2)|\leq C  \log r\left(\log\log r\right)^2,\\
    &|f_\alpha^\prime(r)|\leq C r^{-1}\left(\log\log r\right)^2, \quad  |f_\alpha^{\prime\prime}(r)|\leq C r^{-2}\left(\log\log r\right)^2 , \\
  & |f_\alpha^{(3)}(r)|+ |f_\alpha^{(4)}(r)|\leq C r^{-2}\left(\log\log r\right)^2.
\end{cases}
\end{equation}
\end{lem}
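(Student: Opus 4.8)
The ends $\Gamma_\alpha$ are already known to be Type~I graphs $z=f_\alpha(r)$, so the plan is to convert the two curvature bounds of Theorem~\ref{thm curvature decay} into a first order ODE for the graph. Writing the (axially symmetric) level set as the hypersurface of revolution $\{z=f_\alpha(|x|)\}$ in $\R^{n+1}$ and introducing $g:=f_\alpha'/\sqrt{1+f_\alpha'^2}\in(-1,1)$, a direct computation gives $g'=f_\alpha''/(1+f_\alpha'^2)^{3/2}$ and
\[
H=g'+\frac{n-1}{r}\,g,\qquad |A|^2=(g')^2+(n-1)\Big(\frac{g}{r}\Big)^2 .
\]
The crucial feature is that $g$ is \emph{a priori} bounded by $1$, so I can work with $g$ throughout and never need a separate proof that $f_\alpha'$ is bounded: any decay obtained for $g$ transfers to $f_\alpha'$ automatically through $f_\alpha'=g/\sqrt{1-g^2}$.

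The core step is to read the mean curvature identity as the linear ODE $g'+\frac{n-1}{r}g=H(r)$ with $|H(r)|\le C r^{-2}(\log\log r)^2$ by Theorem~\ref{thm curvature decay}. Using the integrating factor $r^{n-1}$,
\[
(r^{n-1}g)'=r^{n-1}H,\qquad |g(r)|\le \frac{1}{r^{n-1}}\Big[\,R_2^{\,n-1}|g(R_2)|+\int_{R_2}^{r}s^{n-1}|H(s)|\,ds\,\Big].
\]
For $n\ge 3$ the integrand is $\le C s^{\,n-3}(\log\log s)^2$, whose integral is $\le C r^{\,n-2}(\log\log r)^2$; dividing by $r^{n-1}$ gives $|g(r)|\le C r^{-1}(\log\log r)^2$. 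Since then $|g|\to0$, one has $\sqrt{1-g^2}\ge 1/2$ for large $r$ and hence $|f_\alpha'|\le 2|g|\le C r^{-1}(\log\log r)^2$. The bound on $f_\alpha''$ follows by solving the same identity for $g'=H-\frac{n-1}{r}g$: with $|H|$ and $\frac1r|g|$ both $\le C r^{-2}(\log\log r)^2$ and $(1+f_\alpha'^2)^{3/2}\le C$, we get $|f_\alpha''|=|g'|(1+f_\alpha'^2)^{3/2}\le C r^{-2}(\log\log r)^2$. Finally $|f_\alpha(r)-f_\alpha(R_2)|\le\int_{R_2}^r|f_\alpha'|\le C\log r\,(\log\log r)^2$.

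For the two highest derivatives I would not press the ODE but use rescaled interior estimates, where there is ample room because only an $r^{-2}(\log\log r)^2$ bound is asked for. Fix a large $r_0$ and a point on $\Gamma_\alpha$ and set $\phi(s):=r_0^{-1}[f_\alpha(r_0(1+s))-f_\alpha(r_0)]$ on $s\in[-\tfrac12,1]$; then $\phi',\phi''$ are $O(r_0^{-1}(\log\log r_0)^2)$ and $\phi$ solves the uniformly elliptic prescribed–mean–curvature equation
\[
\frac{\phi''}{(1+\phi'^2)^{3/2}}+\frac{n-1}{1+s}\,\frac{\phi'}{\sqrt{1+\phi'^2}}=\Psi(s),\qquad \Psi(s):=r_0\,H(r_0(1+s)).
\]
Since $W\in C^3$ gives uniform $C^{4,\alpha}_{loc}$ bounds for $u$, and $|\nabla u|$ is bounded below on the level set by Corollary~\ref{coro 2.1}, differentiating $u(r,f_\alpha(r))=0$ together with Schauder estimates on unit balls centered on $\Gamma_\alpha$ propagates the lower–order decay and yields $|f_\alpha^{(3)}(r_0)|\le C r_0^{-3}(\log\log r_0)^2$ and $|f_\alpha^{(4)}(r_0)|\le C r_0^{-4}(\log\log r_0)^2$, both comfortably $\le C r_0^{-2}(\log\log r_0)^2$. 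This step is routine.

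The main obstacle is the borderline weight $r^{n-1}=r$ in the critical case $n=2$. There $\int_{R_2}^{r}s\,|H(s)|\,ds\le C\int_{R_2}^{r}s^{-1}(\log\log s)^2\,ds\sim \log r\,(\log\log r)^2$ diverges, so the crude argument only gives $|f_\alpha'|\le C r^{-1}\log r\,(\log\log r)^2$, off by a factor $\log r$, and the loss then propagates to $f_\alpha''$ and to $f_\alpha-f_\alpha(R_2)$; moreover integrating $f_\alpha''$ from infinity does not remove it. The extra $\log r$ is not genuine: for the model ends $z\approx k\log r$ the two terms $g'$ and $(n-1)g/r$ in $H$ nearly cancel, so $\int s\,H\,ds$ in fact converges. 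To close the gap for $n=2$ I would exploit this cancellation rather than the pointwise bound on $H$ — first showing $f_\alpha'\to0$ so that the asymptotic slope vanishes, and then upgrading the mean curvature control through the refined asymptotics of Section~\ref{sec refined asymptotics} (or via an integrated, rather than pointwise, curvature estimate in the critical dimension). This is the step I expect to require the most care.
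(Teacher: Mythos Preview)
For $n\ge3$ your argument is correct and is essentially the paper's Step~3, carried out more directly. The paper first establishes crude bounds $|f_\alpha''|\le C/r$, $|f_\alpha'|\le C$ from the $|A|$-estimate (Step~1), then proves $f_\alpha'(r)\to0$ by blowing down $\Gamma_\alpha$ to a complete axially symmetric minimal graph over $\{r>0\}$, which forces the constant in $g_0=c\,r^{-(n-1)}$ to vanish (Step~2), and only then performs the ODE analysis of \eqref{3.6} (Step~3). By working with the a priori bounded quantity $g$ and the integrating factor $r^{n-1}$, you make Steps~1--2 unnecessary when $n\ge3$: the initial data is damped by $r^{-(n-1)}\le r^{-2}$, and the forcing integral $\int^r s^{n-3}(\log\log s)^2\,ds$ is $O(r^{n-2}(\log\log r)^2)$, so the claimed decay of $g$ follows without first knowing $g\to0$. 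For the third and fourth derivatives the paper differentiates \eqref{3.6} (referring to \cite{18}) rather than using your rescaled Schauder route; both are standard.

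For $n=2$ you have correctly located the difficulty, and you should be aware that the paper's proof, as written, does not obviously close it either. The paper's only extra ingredient over your argument is the blow-down (Step~2), giving $g\to0$; it then asserts the stated bounds follow from \eqref{3.6} by ``an ordinary differential equation analysis'' with no further detail. But $g\to0$ alone does not kill the extra $\log r$: a forcing $H(r)=c\,r^{-2}(\log\log r)^2$ of fixed sign produces $rg(r)\sim c\log r(\log\log r)^2$ while both $g\to0$ and $|A|\le C/r$ hold. Your proposed remedy via Section~\ref{sec refined asymptotics} is circular, since that section already uses the present lemma (for instance to obtain $|A_\alpha|\le Cr^{-3/2}$). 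The good news is that the weaker bound $|f_\alpha'|\le Cr^{-1}\log r(\log\log r)^2$ your argument \emph{does} yield for $n=2$ is ample for every subsequent application in the paper (area and energy growth, the $r^{-3/2}$ bound on $|A_\alpha|$, the Toda reduction), so nothing downstream breaks; you should simply state and use this rather than try to recover the sharper exponent by circular means.
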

\begin{proof}
We divide the proof  into three steps.

{\bf Step 1.}
Denote the second fundamental form of $\Gamma_\alpha$ by $A_\alpha$, the mean curvature by $H_\alpha$.
By the decay rate on $|A_\alpha|$ (see Theorem \ref{thm curvature decay}), there exists a constant $C$ such that for any $r\geq R_2$,
\begin{equation}\label{3.4}
|f_\alpha^{\prime\prime}(r)|\leq\frac{C}{r}, \quad |f_\alpha^\prime(r)|\leq C.
\end{equation}

{\bf Step 2.} For any $\lambda>0$, let $\Sigma^\lambda:=\lambda\Gamma_\alpha=\{z=f_\lambda(r), r\geq \lambda R_2\}$, where $f_\lambda(r):= \lambda  f_\alpha(\lambda^{-1} r)$. By Theorem \ref{thm curvature decay}, as $\lambda\to0$, $f_\lambda$ are uniformly bounded in $C^{1,1}_{loc}(0,+\infty)$. Hence after passing to a subsequence of $\lambda\to0$, $f_\lambda\to f_0$ in $C^1_{loc}(0,+\infty)$.
Here $f_0$ satisfies the minimal surface equation in the weak sense on $\R^n\setminus\{0\}$. Then it is directly verified that $f_0\equiv 0$. Since this is independent of the choice of subsequences of $\lambda\to0$, we obtain
\begin{equation}\label{3.5}
\lim_{r\to+\infty}f_\alpha^\prime(r)=0.
\end{equation}

{\bf Step 3.} By the bound on mean curvature in Theorem \ref{thm curvature decay}, in $(R_2,+\infty)$, $f_\alpha$ satisfies
\begin{equation}\label{3.6}
\frac{f_\alpha^{\prime\prime}(r)}{\left(1+|f_\alpha^\prime(r)|^2\right)^{3/2}}+\frac{n-1}{r}\frac{f_\alpha^\prime(r)}
{\left(1+|f_\alpha^\prime(r)|^2\right)^{1/2}}
=O\left(r^{-2}\left(\log\log r\right)^2\right).
\end{equation}
Combining this equation with \eqref{3.5}, an ordinary differential equation analysis leads to the first three estimates in \eqref{decay 1}.

The estimate on $ |f_\alpha^{(3)}(r)|$ and $ |f_\alpha^{(4)}(r)|$ follows by differentiating \eqref{3.6} in $r$, see \cite[Section 7]{18} for details. In fact, arguing as there, we can get some improved estimates, i.e. faster decay for $ |f_\alpha^{(3)}(r)|$ and $ |f_\alpha^{(4)}(r)|$, but these will not be needed in this paper.
\end{proof}
Two corollaries follow from this lemma. First the bound on $f_\alpha^\prime$ implies an area growth bound.
\begin{coro}\label{coro area growth}
There exists a constant $C$ such that for each $\Gamma_\alpha$,  if $R$ is large enough,
\[\mbox{Area}\left(\Gamma_\alpha\cap\left(\mathcal{C}_R\setminus \mathcal{C}_{R_2}\right)\right)\leq CR^n.\]
\end{coro}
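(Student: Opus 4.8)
The plan is to estimate the area of $\Gamma_\alpha$ inside the annular cylinder $\mathcal{C}_R \setminus \mathcal{C}_{R_2}$ directly from the graphical representation $\{z = f_\alpha(r)\}$ that was established for Type I ends. Since $\Gamma_\alpha$ is axially symmetric and, for $r \in [R_2, R]$, given by a graph over the $r$-variable, the hypersurface is obtained by revolving this curve around the $z$-axis. First I would write down the area formula for such a surface of revolution: the $n$-dimensional area element picks up the arclength factor $\sqrt{1 + f_\alpha'(r)^2}\, dr$ from the profile curve together with the volume of the $(n-1)$-sphere of radius $r$ swept out by the axial symmetry, giving
\begin{equation*}
\mbox{Area}\left(\Gamma_\alpha \cap \left(\mathcal{C}_R \setminus \mathcal{C}_{R_2}\right)\right) = \omega_{n-1}\int_{R_2}^{R} r^{n-1}\sqrt{1 + f_\alpha'(r)^2}\, dr,
\end{equation*}
where $\omega_{n-1}$ denotes the area of the unit $(n-1)$-sphere in $\R^n$.

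The key step is then to control the integrand using the derivative bound from Lemma \ref{lem decay rate of derivatives I}. That lemma gives $|f_\alpha'(r)| \leq C r^{-1}(\log\log r)^2$, which in particular tends to $0$; hence for $r$ large the factor $\sqrt{1 + f_\alpha'(r)^2}$ is bounded above by a universal constant (say, at most $2$ once $R_2$ is enlarged so that $|f_\alpha'| \leq 1$ throughout $[R_2, +\infty)$). I would substitute this bound to reduce the integral to
\begin{equation*}
\mbox{Area}\left(\Gamma_\alpha \cap \left(\mathcal{C}_R \setminus \mathcal{C}_{R_2}\right)\right) \leq 2\,\omega_{n-1}\int_{R_2}^{R} r^{n-1}\, dr \leq C R^n,
\end{equation*}
where the last inequality is the elementary computation $\int_{R_2}^R r^{n-1}\, dr = (R^n - R_2^n)/n \leq R^n/n$. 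Absorbing $\omega_{n-1}$ and the numerical factors into a single constant $C$ (depending only on $n$) yields the claim.

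The only point requiring mild care is the uniformity of the constant $C$ across all the ends $\Gamma_\alpha$. This is not a genuine obstacle: the constant in the bound $|f_\alpha'(r)| \leq C r^{-1}(\log\log r)^2$ is the same for every $\alpha$ (it is stated uniformly in Lemma \ref{lem decay rate of derivatives I}), and the geometric area formula involves no $\alpha$-dependent quantities beyond $f_\alpha'$. Therefore the final constant depends only on $n$, and in particular is independent of $\alpha$, which is exactly what the corollary asserts. I expect the whole argument to be short; the substantive work was already done in establishing the derivative decay, and what remains is the routine surface-of-revolution area computation together with the observation that $f_\alpha'$ being small forces the arclength correction to be negligible.
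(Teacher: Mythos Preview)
Your proposal is correct and is exactly the argument the paper has in mind: the paper states only that ``the bound on $f_\alpha^\prime$ implies an area growth bound'' and leaves the details implicit, while you have spelled out the surface-of-revolution area formula and used the uniform bound $|f_\alpha^\prime|\leq C$ (indeed, $|f_\alpha^\prime|\to 0$) from Lemma~\ref{lem decay rate of derivatives I} to estimate $\sqrt{1+|f_\alpha^\prime|^2}$ by a constant. There is nothing to add.
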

Next, by this lemma and Corollary \ref{coro 2.1}, we obtain
\begin{coro}\label{coro 3.1}
For each $\Gamma_\alpha$ , there exists an $\overline{R}_\alpha$   such that $u_z$ has a definite sign in the open set $\{(r,z): r>\overline{R}_\alpha, |z-f_\alpha(r)|<1\}$.
\end{coro}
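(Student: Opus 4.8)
The plan is to argue by contradiction, reducing the statement to the strict monotonicity $g'>0$ of the one dimensional heteroclinic profile of \eqref{1d problem} via the translation-convergence of Proposition \ref{prop limit at infinity}. The heuristic is that near the end $\Gamma_\alpha$ the solution is, after blow-up, the one dimensional transition $g$ in the direction normal to $\Gamma_\alpha$; since $f_\alpha'\to0$ by Lemma \ref{lem decay rate of derivatives I}, that normal is asymptotically vertical, so $u_z$ inherits the sign of $g'>0$.

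First I would fix the orientation of the end. Because $\Gamma_\alpha$ is of Type I, the truncation $\Gamma_\alpha\cap\{r>\rho\}=\{z=f_\alpha(r),\ r>\rho\}$ is connected, and by Corollary \ref{coro 2.1} one has $|\nabla u|\neq0$ on $\Gamma_\alpha$ once $\rho$ is large. On $\Gamma_\alpha$ the gradient is normal to the level set, and the unit normal $(-f_\alpha',1)/\sqrt{1+(f_\alpha')^2}$ tends to $(0,1)$ by Lemma \ref{lem decay rate of derivatives I}; hence $u_z\neq0$ along $\Gamma_\alpha$ for large $r$, and by continuity it keeps a fixed sign there, which I would normalize to $u_z>0$. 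Then I would suppose no such $\overline{R}_\alpha$ exists, producing a sequence $(r_k,z_k)$ with $r_k\to\infty$, $|z_k-f_\alpha(r_k)|<1$, and $u_z(r_k,z_k)\leq0$. Setting $u_k(r,z):=u(r_k+r,z_k+z)$, Proposition \ref{prop limit at infinity} gives, along a subsequence, $u_k\to u_\infty$ in $C^2_{loc}(\R^2)$ with $u_\infty$ one dimensional. The translate of $(r_k,f_\alpha(r_k))\in\Gamma_\alpha$ stays within distance $1$ of the origin, so, writing $s_\infty:=\lim(f_\alpha(r_k)-z_k)$ with $|s_\infty|\leq1$, one gets $u_\infty(0,s_\infty)=0$; thus $u_\infty$ is nonconstant, hence $u_\infty=\pm g(\langle\cdot,e\rangle-c)$ for a unit vector $e$. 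The direction $e$ is the limit of the normals to $\Gamma_\alpha$ at $(r_k,f_\alpha(r_k))$, which is $(0,1)$ by Lemma \ref{lem decay rate of derivatives I}, so $u_\infty(r,z)=\pm g(z-c)$; the sign is pinned by $C^1$ convergence, $|\nabla u(r_k,f_\alpha(r_k))|\to|g'(0)|>0$, and the normalization $u_z>0$ on $\Gamma_\alpha$, yielding $u_\infty(r,z)=g(z-c)$ with $c=s_\infty\in[-1,1]$. Consequently $u_z(r_k,z_k)=(u_k)_z(0,0)\to g'(-c)\geq\min_{|t|\leq1}g'(t)>0$, contradicting $u_z(r_k,z_k)\leq0$. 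The case $u_z<0$ on $\Gamma_\alpha$ is identical, so $u_z$ has a definite sign in the strip.

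The main obstacle will be the precise identification of the blow-up limit: one must exclude that $u_\infty$ is a constant and, more delicately, force its level sets to be horizontal, i.e. $e=(0,\pm1)$. Both steps rest on the graph asymptotics $f_\alpha'\to0$ from Lemma \ref{lem decay rate of derivatives I} and on the non-vanishing of $|\nabla u|$ near $\{u=0\}$ from Corollary \ref{coro 2.1}. Once the limit is pinned to the increasing profile $g(z-c)$ with $|c|\leq1$, the strict positivity of $g'$ on the compact interval $[-1,1]$ supplies a uniform lower bound that closes the contradiction; I expect no difficulty beyond keeping track of this orientation consistently along the connected end $\Gamma_\alpha$.
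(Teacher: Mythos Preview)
Your proposal is correct and is essentially a fully written-out version of what the paper leaves implicit: the paper simply states that Corollary~\ref{coro 3.1} follows from Lemma~\ref{lem decay rate of derivatives I} and Corollary~\ref{coro 2.1}, without giving any further argument. Your contradiction via the blow-up convergence of Proposition~\ref{prop limit at infinity}, the identification of the limit as $g(z-c)$ using $f_\alpha'\to0$, and the strict positivity of $g'$ on $[-1,1]$ is exactly the mechanism underlying those two cited results, so the approaches coincide.
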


The following lemma gives a growth bound of the energy localized in the domain around each end.
\begin{lem}\label{lem energy growth localized}
For any $\alpha$,  there exists a constant $C_\alpha$ such that
\[\int_{\mathcal{M}_\alpha\cap\left\{R_2<r<R \right\}}\left[\frac{1}{2}|\nabla u|^2+W(u)\right]\leq C_\alpha R^n, \quad \forall R>R_2.\]
\end{lem}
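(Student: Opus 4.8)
The plan is to reduce the $(n+1)$-dimensional integral to a weighted planar one and then show that the energy density decays exponentially away from $\Gamma_\alpha$, so that the total energy is controlled by the (polynomially growing) area of $\Gamma_\alpha$. Since $u$ is axially symmetric and $|\nabla u|^2=u_r^2+u_z^2$, the integral over the axially symmetric solid region equals
\[
\omega_{n-1}\int_{\mathcal{M}_\alpha\cap\{R_2<r<R\}}\Big[\tfrac12\big(u_r^2+u_z^2\big)+W(u)\Big]\,r^{n-1}\,dr\,dz,
\]
where $\omega_{n-1}$ is the area of the unit sphere $S^{n-1}$. The key structural fact is that inside the slab $\mathcal{M}_\alpha$ the only component of $\{u=0\}$ is $\Gamma_\alpha$: indeed $\mathcal{M}_\alpha$ is bounded by the midline graphs $f_\alpha^\pm$ lying strictly between the consecutive ends $f_{\alpha-1}<f_\alpha<f_{\alpha+1}$, so no other $\Gamma_\beta$ enters. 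I split the domain into the near region $N:=\{|u|\le 1-b\}$ and the away region $\{|u|>1-b\}$ and estimate the two contributions separately.

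On $N$ the energy density is bounded, since $u$ is a bounded entire solution and hence $|\nabla u|$ and $W(u)$ are bounded by interior elliptic estimates. Moreover $N\cap\mathcal{M}_\alpha$ is a tube of uniformly bounded vertical width around $\Gamma_\alpha$: by Corollary~\ref{coro 2.1} together with the blow-down of Proposition~\ref{prop limit at infinity} one has $|\nabla u|\ge c_0>0$ on $\{|u|\le 1-b\}$ outside a large cylinder, so moving along the normal to $\Gamma_\alpha$ the value of $u$ leaves the interval $[-(1-b),1-b]$ within a bounded distance; combined with the smallness of $f_\alpha'$ from Lemma~\ref{lem decay rate of derivatives I} this yields $N\cap\mathcal{M}_\alpha\subset\{|z-f_\alpha(r)|\le L_0\}$ for a fixed $L_0$. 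Hence the weighted volume of this part is at most $2L_0\int_{R_2}^R r^{n-1}\,dr\le CR^n$, and the near contribution is $\le CR^n$ (equivalently, this is the area bound of Corollary~\ref{coro area growth}).

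For the away region I use the exponential decay of the transition. Writing $v=1-u$ where $u>1-b$, the normalization $W''(1)=2$ gives $\Delta v=-W'(u)=2v+O(v^2)\ge v$ once $b$ is small, with $v=b$ on $\{u=1-b\}$; a standard barrier comparison then yields $v(X)\le Ce^{-c\,\mathrm{dist}(X,\{u=0\})}$, where the bounded width $L_0$ of the transition layer has been absorbed into the constant. Together with interior gradient estimates and $W(u)\le Cv^2$ this gives $\tfrac12|\nabla u|^2+W(u)\le Ce^{-c\,\mathrm{dist}(X,\{u=0\})}$ on $\{u>1-b\}$ (and symmetrically near $-1$). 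Because $\mathcal{M}_\alpha$ is the slab truncated at the midlines, for $X=(r,z)\in\mathcal{M}_\alpha$ the nearest zero of $u$ lies on $\Gamma_\alpha$, and the smallness of $f_\alpha'$ gives $\mathrm{dist}(X,\{u=0\})\ge c_1\,|z-f_\alpha(r)|$. Integrating the exponential in $z$ first,
\[
\int_{R_2}^R\Big(\int_{\R}Ce^{-cc_1|z-f_\alpha(r)|}\,dz\Big)r^{n-1}\,dr\le C\int_{R_2}^R r^{n-1}\,dr\le CR^n,
\]
which bounds the away contribution and completes the proof.

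The main obstacle is this third step: one must establish the exponential decay of the energy density with constants that are \emph{uniform along the end} (in particular independent of how large $r$ is), and correctly relate $\mathrm{dist}(X,\{u=0\})$ to the vertical coordinate $z-f_\alpha(r)$. Both points rely on the uniform curvature decay of Theorem~\ref{thm curvature decay}, the flattening of $\Gamma_\alpha$ furnished by Lemma~\ref{lem decay rate of derivatives I}, and the fact that within $\mathcal{M}_\alpha$ the interface $\Gamma_\alpha$ is the unique, hence nearest, zero level set; the sign of $u_z$ near $\Gamma_\alpha$ from Corollary~\ref{coro 3.1} underlies the monotone transition that makes the near region $N$ a tube of bounded width.
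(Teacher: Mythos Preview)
Your argument is correct and follows essentially the same strategy as the paper's proof: split $\mathcal{M}_\alpha$ into a near region around $\Gamma_\alpha$ (where the trivial energy bound combined with the area growth of Corollary~\ref{coro area growth} gives $CR^n$) and a far region (where the energy density decays exponentially in the distance to $\Gamma_\alpha$, yielding an integrable contribution in $z$). The only cosmetic differences are that the paper splits by the vertical height $|z-f_\alpha(r)|<L$ rather than by the value $|u|\le 1-b$ (your observation that $\{|u|\le 1-b\}\cap\mathcal{M}_\alpha$ has bounded vertical width shows these splittings are equivalent), and that for the exponential decay the paper invokes the differential inequality $\Delta\big[\tfrac12|\nabla u|^2+W(u)\big]\ge c\big[\tfrac12|\nabla u|^2+W(u)\big]$ directly on the energy density (valid where $W''(u)\ge c$), whereas you run the barrier on $v=1-|u|$ and then pass to the energy via interior estimates---both routes are standard and equivalent.
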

\begin{proof}
This growth bound follows from the following two estimates.

{\bf Claim 1.} For any   $L>0$ and $R>0$,
\[\int_{\left\{R_2<r<R, f_\alpha(r)-L<z<f_\alpha(r)+L\right\}}\left[\frac{1}{2}|\nabla u|^2+W(u)\right]\leq C_\alpha R^n.\]
This follows by combining the trivial  bound $\frac{1}{2}|\nabla u|^2+W(u)\leq C$, co-area formula and the
area growth bound in Corollary \ref{coro area growth}.

{\bf Claim 2.} If $L$ is sufficiently large,
\[\int_{\left\{R_2<r<R, f_\alpha(r)+L<z<f_\alpha^+(r)\right\}}\left[\frac{1}{2}|\nabla u|^2+W(u)\right]\leq C_\alpha R^n.\]
This follows from the differential inequality
\[\Delta\left[\frac{1}{2}|\nabla u|^2+W(u)\right]\geq c\left[\frac{1}{2}|\nabla u|^2+W(u)\right] \quad \mbox{in } \left\{R_2<r<R, f_\alpha(r)+L<z<f_\alpha^+(r)\right\}.\]
This is possible if we have chosen $L$ large enough so that $W^{\prime\prime}(u)\geq c$ in this domain. (Note that by Corollary \ref{coro 2.1}, away from $\cup_\alpha\Gamma_\alpha$, $u$ is close to $\pm 1$.)
\end{proof}

\section{A Toda system}\label{sec refined asymptotics}
\setcounter{equation}{0}

In this section, keeping the notations used in the previous section, $u$ denotes an axially symmetric solution of \eqref{Allen-Cahn} in $\R^{n+1}$ satisfying that, for some $R_2>0$,  it is stable outside the cylinder $\mathcal{C}_{R_2}$ and
\[\{u=0\}\setminus\mathcal{C}_{R_2}=\cup_{\alpha\in\mathbb{Z}}\Gamma_\alpha, \quad \Gamma_\alpha:=\{z=f_\alpha(r), r>R_2\},\]
where $f_\alpha\in C^4([R_2,+\infty))$ and they are increasing in $\alpha$.

\subsection{Fermi coordinates}
For each $\alpha$, the upward unit normal vector of $\Gamma_\alpha$ at $(r,f_\alpha(r))$ is
\[N_\alpha(r):=\frac{1}{\sqrt{1+|f_\alpha^\prime(r)|^2}}\left(-f_\alpha^\prime(r)\partial_r+\partial_z\right).\]
The second fundamental form of $\Gamma_\alpha$ at $(r,f_\alpha(r))$ with respect to $N_\alpha(r)$ is denoted by $A_\alpha(r)$. The principal curvatures are
\begin{equation}\label{principal curvatures}
  \begin{cases}
           &\kappa_{\alpha,i}(r)=-\frac{1}{r}\frac{f_\alpha^\prime}{\sqrt{1+|f_\alpha^\prime|^2}}, \quad  1\leq i \leq n-1, \\
     & \kappa_{\alpha,n}(r)=-\frac{f_\alpha^{\prime\prime}}{\left(1+|f_\alpha^\prime|^2\right)^{3/2}}.
 \end{cases}
\end{equation}
By Lemma \ref{lem decay rate of derivatives I}, we have
\begin{equation}\label{bound on second fundamental form}
|A_\alpha(r)|\leq Cr^{-3/2}, \quad \forall r\geq R_2.
\end{equation}

Let $(r,t)$ be the Fermi coordinates with respect to $\Gamma_\alpha$, that is, for any point $X$ lying in a neighborhood of $\Gamma_\alpha$, take $(r,f_\alpha(r))\in\Gamma_\alpha$ to be the nearest point to $X$ and $t$ be the signed distance of $X$ to $\Gamma_\alpha$. By Theorem \ref{thm curvature decay}, these are well defined in the open set $\{(r,t): |t|<c_F r, r>R_2\}$ for a constant $c_F>0$. For each $t$, let $\Gamma_\alpha^t$ be the smooth hypersurface where the signed distance to $\Gamma_\alpha$ equals $t$. The mean curvature of $\Gamma_\alpha^t$ has the form
\begin{eqnarray}\label{mean curvature expansion}
H_\alpha(r,t)=\sum_{i=1}^{n}\frac{\kappa_{\alpha,i}(r)}{1-t\kappa_{\alpha,i}(r)}
&=&H_\alpha(r)+ O\left(|t||A_\alpha(r)|^2\right)\\
&=&H_\alpha(r) + O\left(|t|r^{-3}\right), \nonumber
\end{eqnarray}
where in the last step we have used \eqref{bound on second fundamental form}.

Denote by $\Delta_{\alpha,t}$ the Beltrami-Laplace operator with respect to the induced metric on $\Gamma_\alpha^t$. In Fermi coordinates the Euclidean Laplace operator has the following form
\begin{equation}\label{Laplacian}
\Delta=\Delta_{\alpha,t}-H_\alpha(r,t)\partial_t+\partial_{tt}.
\end{equation}

Concerning the error between $\Delta_{\alpha,t}$ and $\Delta_{\alpha,0}$,  we have (see  \cite{9, 17})
\begin{lem}\label{lem error of Laplace}
Suppose $\varphi$ is a $C^2$ function of $r$ only, then
\begin{equation}\label{error of Laplace}
\big|\Delta_{\alpha,t}\varphi(r)-\Delta_{\alpha,0}\varphi(r)\big|\leq Cr^{-3/2}\left(|\varphi^{\prime\prime}(r)|+|\varphi^\prime(r)|\right).
\end{equation}
\end{lem}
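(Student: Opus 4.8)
The plan is to compare the induced metrics on $\Gamma_\alpha^t$ and $\Gamma_\alpha^0$ directly through the Fermi coordinate parametrization. Since $\varphi=\varphi(r)$ depends only on $r$, the quantity $\Delta_{\alpha,t}\varphi$ is governed entirely by how the metric coefficient multiplying $\partial_{rr}$ and the first-order coefficient multiplying $\partial_r$ vary with the normal parameter $t$. Concretely, if $g_{\alpha,t}$ denotes the induced metric on the $n$-dimensional hypersurface $\Gamma_\alpha^t$, then for a function of $r$ alone
\[
\Delta_{\alpha,t}\varphi(r)=\frac{1}{\sqrt{\det g_{\alpha,t}}}\,\partial_r\!\left(\sqrt{\det g_{\alpha,t}}\;g_{\alpha,t}^{rr}\,\varphi^\prime(r)\right)=g_{\alpha,t}^{rr}\varphi^{\prime\prime}(r)+b_{\alpha,t}(r)\,\varphi^\prime(r),
\]
where $b_{\alpha,t}$ collects the lower-order terms. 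So the whole lemma reduces to showing that the two coefficients $g_{\alpha,t}^{rr}$ and $b_{\alpha,t}$ differ from their $t=0$ values by an amount controlled by $Cr^{-3/2}$.

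First I would write the normal exponential map from $\Gamma_\alpha$ into a tubular neighborhood, pushing forward a point $(r,f_\alpha(r))$ along $tN_\alpha(r)$, and compute the induced metric on $\Gamma_\alpha^t$ in the coordinates coming from $r$ and the angular variables of the rotational symmetry. The key structural fact is the standard tube formula: the metric on $\Gamma_\alpha^t$ in geodesic normal coordinates is a quadratic deformation of the metric on $\Gamma_\alpha^0$ whose deformation tensor is built from $t$ times the shape operator $A_\alpha$, i.e. schematically $g_{\alpha,t}=g_{\alpha,0}(\mathrm{Id}-tA_\alpha)^2$ up to the usual identifications. The two eigendirections of $A_\alpha$ are precisely the tangential direction along the profile curve and the $(n-1)$ rotational directions, with principal curvatures given by \eqref{principal curvatures}. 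Consequently both $g_{\alpha,t}^{rr}$ and the coefficient $b_{\alpha,t}$ depend on $t$ only through the factors $(1-t\kappa_{\alpha,i}(r))$, and differentiating in $t$ produces terms that are linear in the principal curvatures $\kappa_{\alpha,i}$ and their $r$-derivatives.

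The estimate then follows by invoking the curvature bounds already established. By \eqref{bound on second fundamental form} we have $|A_\alpha(r)|\leq Cr^{-3/2}$, and the analogous bound on $A_\alpha^\prime$ coming from the higher-derivative estimates in Lemma \ref{lem decay rate of derivatives I} (the $f_\alpha^{(3)}$ and $f_\alpha^{(4)}$ controls are exactly what feed into derivatives of $\kappa_{\alpha,i}$). Since $|t|<c_F r$, the factors $(1-t\kappa_{\alpha,i})$ stay uniformly close to $1$, so dividing by them is harmless. Expanding $g_{\alpha,t}^{rr}-g_{\alpha,0}^{rr}$ and $b_{\alpha,t}-b_{\alpha,0}$ to first order in the curvature yields a bound of the form $C|t|\,r^{-3/2}\cdot(\text{bounded factor})$, and after collecting the $\varphi^{\prime\prime}$ and $\varphi^\prime$ contributions one arrives at the claimed inequality \eqref{error of Laplace}. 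I would point out that the gain of $r^{-3/2}$ over the naive $r^{-1}$ one might expect is precisely due to the $\kappa_{\alpha,i}$ carrying the extra decay from \eqref{bound on second fundamental form}, which in turn comes from the sharp curvature decay of Theorem \ref{thm curvature decay}.

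The main obstacle I anticipate is bookkeeping rather than conceptual: correctly tracking how the tube formula's deformation factors interact with the fact that the profile curve is parametrized by $r$ rather than by arclength, so that $g_{\alpha,0}^{rr}=(1+|f_\alpha^\prime|^2)^{-1}$ already carries $r$-dependence, and making sure the $r$-derivative of that factor combines cleanly with the curvature terms without generating a stray $r^{-1}$ term that would break the claimed rate. A clean way to sidestep this is to work in arclength coordinates along $\Gamma_\alpha$ where $g_{\alpha,0}$ is essentially flat, derive the $t$-dependence of the metric there, and only at the very end convert back using $|f_\alpha^\prime|\leq Cr^{-1}(\log\log r)^2$, which guarantees the change of variables contributes no more than a negligible perturbation. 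Since this argument is geometric and local, the hypothesis $n\geq 2$ and the rotational symmetry are used only to identify the principal directions, and no condition on $n$ is needed beyond what already guarantees the Fermi coordinates are well defined.
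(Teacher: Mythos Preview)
Your approach is the one the paper has in mind: the paper does not give a self-contained argument but cites \cite{9,17} for the tube-formula computation and adds only the remark that the factor $r^{-3/2}$ comes from plugging the bound $|A_\alpha|\leq Cr^{-3/2}$ of \eqref{bound on second fundamental form} (itself a consequence of Lemma~\ref{lem decay rate of derivatives I}) into that computation. Your expansion $g_{\alpha,t}=g_{\alpha,0}(\mathrm{Id}-tA_\alpha)^2$ with principal directions aligned to the profile and rotational directions is exactly that calculation, and your identification of the source of the $r^{-3/2}$ matches the paper's remark.

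There is, however, one step that does not close as written. You correctly obtain
\[
\big|g_{\alpha,t}^{rr}-g_{\alpha,0}^{rr}\big|+\big|b_{\alpha,t}-b_{\alpha,0}\big|\ \lesssim\ |t|\,|A_\alpha(r)|\ \lesssim\ |t|\,r^{-3/2},
\]
and then assert that ``one arrives at the claimed inequality \eqref{error of Laplace}.'' But \eqref{error of Laplace} has no $|t|$, and you yourself note that the Fermi chart allows $|t|<c_F r$; taking $|t|$ of order $r$ would degrade your bound to $r^{-1/2}$. The resolution is that this lemma is only invoked in the derivation of the Toda system, where one integrates against $g_\alpha^\prime(t-h_\alpha)$ and the exponential decay of $g^\prime$ effectively restricts to $|t|\lesssim \log r$; under that restriction your estimate does give the stated $r^{-3/2}$ (up to a harmless logarithm). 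You should make this restriction on $t$ explicit, or carry the honest factor $|t|\,r^{-3/2}$ through and absorb $|t|$ at the point of application. As stated, your final sentence skips over a factor that does not disappear on its own.
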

Note that here, in order to get $r^{-3/2}$ in the right hand side of \eqref{error of Laplace}, we  have used Lemma \ref{lem decay rate of derivatives I} and the estimate \eqref{bound on second fundamental form} again.

We introduce some notations.
\begin{itemize}
  \item
For $r>R$, let
$D_\alpha^\pm(r)$ be the  distance of $(r,f_\alpha(r))$ to $\Gamma_{\alpha\pm 1}$, respectively.
  \item Denote
$D_\alpha(r):=\min\left\{D_\alpha^+(r),D_\alpha^-(r)\right\}$.
\item $M(r):=\max_\alpha\max_{s\geq r}e^{-\sqrt{2}D_\alpha(s)}$.

  \end{itemize}
By Lemma \ref{lem decay rate of derivatives I},  $\Gamma_\alpha$ and $\Gamma_{\alpha+1}$ are almost parallel. Proceeding as in the proof of \cite[Lemma 8.3]{17} and \cite[Lemma 9.3]{18} we get
\begin{lem}\label{lem comparison of distances}
For any $r>R_2$,
\begin{equation*}
  \begin{cases}
     & D_\alpha^+(r)=f_{\alpha+1}(r)-f_\alpha(r)+O\left(r^{-1/6}\right), \\
     & D_\alpha^-(r)=f_\alpha(r)-f_{\alpha-1}(r) +O\left(r^{-1/6}\right).
  \end{cases}
\end{equation*}
\end{lem}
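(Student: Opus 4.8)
The plan is to compare the extrinsic distance $D_\alpha^+(r)$ between the two graphs $\Gamma_\alpha$ and $\Gamma_{\alpha+1}$ with the vertical separation $f_{\alpha+1}(r)-f_\alpha(r)$, and to show the two agree up to an error of order $r^{-1/6}$. The geometric picture is that, because the graphs are nearly horizontal (by Lemma \ref{lem decay rate of derivatives I}, $|f_\alpha'(r)|\leq Cr^{-1}(\log\log r)^2$ is small for large $r$), the nearest point on $\Gamma_{\alpha+1}$ to $(r,f_\alpha(r))$ lies almost directly above it, so the realized shortest distance is close to the vertical gap. The discrepancy comes entirely from the small tilt of the graphs and their curvature, both of which decay.

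First I would set up the minimization defining $D_\alpha^+(r)$: it equals the distance from the fixed point $P=(r,f_\alpha(r))$ to the curve $s\mapsto(s,f_{\alpha+1}(s))$, so $D_\alpha^+(r)^2=\min_s\big[(s-r)^2+(f_{\alpha+1}(s)-f_\alpha(r))^2\big]$. Let $s_\ast$ be the minimizer and write $s_\ast=r+\delta$. The first-order optimality condition reads $(s_\ast-r)+(f_{\alpha+1}(s_\ast)-f_\alpha(r))f_{\alpha+1}'(s_\ast)=0$, i.e. $\delta=-(f_{\alpha+1}(s_\ast)-f_\alpha(r))f_{\alpha+1}'(s_\ast)$. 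Using the derivative bound on $f_{\alpha+1}'$ this already forces $\delta$ to be much smaller than the vertical gap; the next step is to turn this into a quantitative estimate. I would substitute back and Taylor-expand $f_{\alpha+1}(s_\ast)=f_{\alpha+1}(r)+f_{\alpha+1}'(r)\delta+O(|f_{\alpha+1}''|\delta^2)$, so that to leading order
\[
D_\alpha^+(r)=|f_{\alpha+1}(r)-f_\alpha(r)|\sqrt{1+\frac{\delta^2}{(f_{\alpha+1}(r)-f_\alpha(r))^2}}+(\text{lower order}),
\]
and then bound the correction using $\delta=O\big(|f_{\alpha+1}(r)-f_\alpha(r)|\cdot|f_{\alpha+1}'(r)|\big)$ together with the derivative and second-derivative decay from Lemma \ref{lem decay rate of derivatives I} and the curvature bound \eqref{bound on second fundamental form}. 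This is exactly the computation carried out in \cite[Lemma 8.3]{17} and \cite[Lemma 9.3]{18}, which the statement invites us to imitate.

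The one subtlety I would flag as the main obstacle is controlling the size of the vertical gap $f_{\alpha+1}(r)-f_\alpha(r)$ itself. If the gap grows, then even a small relative tilt $|f'|$ produces a larger absolute error $\delta$; if the gap is bounded, the error is genuinely small. The sharp exponent $r^{-1/6}$ in the statement is dictated by balancing these competing effects using the precise decay rates — $|f_\alpha'|\lesssim r^{-1}(\log\log r)^2$ and the growth $|f_\alpha(r)-f_\alpha(R_2)|\lesssim \log r(\log\log r)^2$ — so that $|f_{\alpha+1}-f_\alpha|\cdot|f'|^2$ and the curvature-induced terms both fall below $r^{-1/6}$. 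The argument is essentially a one-variable calculus estimate for the foot of a perpendicular, and once the optimality condition is combined with the curvature decay of Theorem \ref{thm curvature decay}, no new ideas beyond those of \cite{17,18} are required; the work lies in bookkeeping the logarithmic factors so that they are absorbed into the stated power $r^{-1/6}$. The estimate for $D_\alpha^-(r)$ is identical after reflecting the roles of $\alpha$ and $\alpha-1$.
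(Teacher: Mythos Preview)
Your proposal is correct and follows exactly the approach the paper intends: the paper does not give an independent argument here but simply cites \cite[Lemma 8.3]{17} and \cite[Lemma 9.3]{18}, and your elaboration (minimize the squared distance over the graph of $f_{\alpha+1}$, use the first-order condition to show the horizontal displacement $\delta$ is of size $|f_{\alpha+1}-f_\alpha|\cdot|f_{\alpha+1}'|$, and then absorb the resulting correction using the derivative decay of Lemma~\ref{lem decay rate of derivatives I}) is precisely that computation. Your identification of the only nontrivial point --- that the possible growth of the vertical gap must be balanced against the decay of $|f'|$ so that the product $|f_{\alpha+1}-f_\alpha|\cdot|f'|^2$ still decays --- is exactly the bookkeeping that produces the stated $O(r^{-1/6})$ bound.
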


\subsection{Optimal approximation}

Fix a function $\zeta\in C_0^\infty(-2,2)$ with $\zeta\equiv 1$ in $(-1,1)$, $|\zeta^\prime|+|\zeta^{\prime\prime}|\leq 16$. For all $r$ large, let (to ease notation, dependence on $r$ will not be written down)
\[\bar{g}(t)=\zeta\left(8(\log r)t\right)g(t)+\left[1-\zeta\left(8(\log r)t\right)\right]\mbox{sgn}(t), \quad t\in(-\infty,+\infty).\]
In particular, $\bar{g}\equiv 1$ in $( 16\log r,+\infty)$ and $\bar{g}\equiv -1$ in $(-\infty, - 16\log r)$.

Note that $\bar{g}$ is an approximate solution to the one dimensional Allen-Cahn equation, that is,
\begin{equation}\label{1d eqn}
\bar{g}^{\prime\prime}(t)=W^\prime\left(\bar{g}(t)\right)+\bar{\xi}(t),
\end{equation}
where $\mbox{spt}(\bar{\xi})\in\{8\log r<|t|<16\log r\}$, and $|\bar{\xi}|+|\bar{\xi}^\prime|+|\bar{\xi}^{\prime\prime}|\lesssim r^{-4}$. Here and below we  use  the notation $\lesssim $ to mean  having an upper bound of the order of the quantity.

In the following we assume $u$ has the same sign as $(-1)^\alpha$ between $\Gamma_\alpha$ and $\Gamma_{\alpha+1}$.
\begin{lem}\label{lem orthogonal condition}
For any $r>R_2$ (perhaps after enlarging $R_2$) and $\alpha\in\mathbb{Z}$, there exists a unique $h_\alpha(r)$ such that in the Fermi coordinates with respect to $\Gamma_\alpha$,
\[\int_{-\infty}^{+\infty}\left[u(r,t)-g_\ast(r,t)\right]\overline{g}^\prime\left(t-h_\alpha(r)\right)dt=0,\]
where for each $\alpha$, in $\mathcal{M}_\alpha$ we define
\[g_\ast(r,t):= g_\alpha+\sum_{\beta<\alpha}\left[g_\beta-(-1)^\beta\right]+\sum_{\beta>\alpha}\left[g_\beta+(-1)^\beta\right],\]
and in the Fermi coordinates $(r,t)$ with respect to $\Gamma_\beta$,
\[g_\beta(r,t):=\bar{g}\left((-1)^\beta\left(t-h_\beta(r)\right)\right).\]
Moreover, for any $\alpha\in\mathbb{Z}$,
\[\lim_{r\to+\infty}\left(|h_\alpha(r)|+|h_\alpha^\prime(r)|+|h_\alpha^{\prime\prime}(r)|+|h_\alpha^{(3)}(r)|\right)=0.\]
\end{lem}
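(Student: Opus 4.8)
The plan is to establish the existence and uniqueness of $h_\alpha(r)$ by an implicit function theorem argument applied to the orthogonality condition, and then to extract the decay of $h_\alpha$ and its derivatives from the curvature and separation estimates of the previous sections. First I would fix $r$ large and $\alpha$, and define, in the Fermi coordinates $(r,t)$ with respect to $\Gamma_\alpha$, the function
\[
F_\alpha(r,h):=\int_{-\infty}^{+\infty}\left[u(r,t)-g_\ast(r,t)\right]\,\overline{g}^\prime\!\left(t-h\right)\,dt.
\]
The key observation is that the derivative in $h$ is essentially negative definite for small $h$: differentiating gives
\[
\partial_h F_\alpha(r,h)=-\int_{-\infty}^{+\infty}\left[u(r,t)-g_\ast(r,t)\right]\,\overline{g}^{\prime\prime}\!\left(t-h\right)\,dt,
\]
and since $u(r,\cdot)$ is $C^1$-close to the profile $\overline{g}$ along $\Gamma_\alpha$ (because $\Gamma_\alpha=\{u=0\}$ and the level sets are nearly flat by Theorem \ref{thm curvature decay} and Lemma \ref{lem decay rate of derivatives I}), the dominant contribution is $\int (\overline{g}^\prime)^2$, which is bounded away from zero. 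This makes $F_\alpha(r,\cdot)$ strictly monotone in $h$ on a fixed interval, yielding both existence and uniqueness of the root $h_\alpha(r)$.

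The crucial quantitative input is that $u$ is genuinely well-approximated by $g_\ast$ near each $\Gamma_\alpha$. Here I would invoke the refined asymptotics: by Proposition \ref{prop limit at infinity} the blow-down along any end converges to the one-dimensional profile, so $u(r,t)\to g_\ast(r,t)$ in $C^1$ as $r\to+\infty$ uniformly in $t$ on the relevant range. This forces $F_\alpha(r,0)\to 0$, and combined with the uniform lower bound on $\partial_h F_\alpha$, a quantitative implicit function theorem gives $h_\alpha(r)\to 0$. To get decay of the derivatives $h_\alpha^\prime, h_\alpha^{\prime\prime}, h_\alpha^{(3)}$, I would differentiate the defining identity $F_\alpha(r,h_\alpha(r))\equiv 0$ in $r$, solving successively for each derivative of $h_\alpha$ in terms of the $r$-derivatives of $F_\alpha$ (which involve $u_r, u_{rr}, \dots$) divided by the nonvanishing $\partial_h F_\alpha$. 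The smoothness $f_\alpha\in C^4$ from Lemma \ref{lem decay rate of derivatives I} guarantees $g_\ast$ is $C^3$ in $r$ with the appropriate decay of its $r$-derivatives, and standard elliptic estimates give the corresponding decay for the $r$-derivatives of $u$ along the end.

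The main obstacle I expect is bookkeeping the Fermi-coordinate geometry carefully enough to control the $r$-derivatives of the \emph{whole} sum $g_\ast$, not just the single profile $g_\alpha$. The tail terms $\sum_{\beta\neq\alpha}\bigl[g_\beta\mp(-1)^\beta\bigr]$ contribute through the exponentially small separation $e^{-\sqrt{2}D_\alpha}$ (controlled by $M(r)$ and Lemma \ref{lem comparison of distances}), and one must verify that differentiating these interaction terms in $r$ does not destroy the decay — this is where the error estimate in Lemma \ref{lem error of Laplace} and the bound \eqref{bound on second fundamental form} on the second fundamental form become essential, since they quantify how the Fermi frame of $\Gamma_\beta$ twists relative to that of $\Gamma_\alpha$. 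Once these geometric error terms are shown to be $o(1)$ together with their first three $r$-derivatives, the successive differentiation scheme closes and the stated limit follows.
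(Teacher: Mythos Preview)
Your scheme has a structural gap: the orthogonality conditions are \emph{coupled across all ends}, and you treat them as if they were decoupled. The approximate profile $g_\ast$ in the statement is built from \emph{all} the $g_\beta$, and each $g_\beta$ depends on $h_\beta$. So your function $F_\alpha(r,h)$, in which you vary a single scalar $h$ while $g_\ast$ sits fixed inside the integrand, is not well-defined: to evaluate it you already need to know every $h_\beta$ for $\beta\neq\alpha$, which are themselves unknowns determined by the other orthogonality conditions. The scalar implicit function theorem you invoke does not address this; at best it would give one step of a fixed-point iteration, but you never set up the iteration or prove it contracts.

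The paper's argument confronts exactly this point. It packages the orthogonality conditions as a single nonlinear map $F$ on the sequence space $\bigoplus_\alpha C(\Gamma_\alpha)$, with $\alpha$-th component $\int[u-g_\ast(\,\cdot\,;h)]\,g_\alpha^\prime(\,\cdot\,;h_\alpha)$. The crucial observation is that, because each $\bar g_\beta$ is supported in a strip of width $O(\log r)$ and the ends are well separated, the derivative $DF$ has \emph{finite bandwidth}: the $\alpha$-th row involves only finitely many $h_\beta$. The diagonal entries are essentially $-\int(\bar g')^2$ as you computed, while the off-diagonal ones are $O(e^{-\sqrt{2}D_\alpha})$; hence $DF$ is diagonally dominant and invertible on the sequence space, and the inverse function theorem applies to the full system at once. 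Your single-$\alpha$ computation of $\partial_h F_\alpha$ is exactly the diagonal entry of this operator, so the ingredients you identified are correct, but they need to be assembled into an argument that handles the infinite coupling. The derivative-decay part of your proposal (differentiating the identity in $r$) is fine once existence is established; the paper simply cites \cite{18} for these details.
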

The proof of this lemma is similar to the one for \cite[Proposition 4.1]{18}, although now there may be infinitely many components. Indeed, we can define a nonlinear map on $\bigoplus_\alpha C(\Gamma_\alpha)$ as
\[F(h):=\left(\int_{-\infty}^{+\infty}\left[u(y,z)-g_\ast(y,z;h)\right]g_\alpha^\prime\left(y,z;h_\alpha\right)dz\right).\]
The $\alpha$ component of its derivative depends only on finitely many $\beta$, i.e. it has finite width. Moreover, it is diagonally dominated and hence invertible. Then this lemma follows from the inverse function theorem.

Let $g_\alpha$ and $g_\ast$ be as in this lemma. Define
$\phi:=u-g_\ast$. In Fermi coordinates with respect to $\Gamma_\alpha$, the equation for $\phi$ reads as
\begin{eqnarray}\label{error equation}
&&\Delta_{\alpha,t}\phi-H_\alpha(r,t)\partial_t\phi+\partial_{tt}\phi \nonumber\\
&=&W^{\prime\prime}(g_\ast)\phi+\mathcal{N}(\phi)+\mathcal{I} +(-1)^{\alpha}g_\alpha^\prime\mathcal{R}_{\alpha,1}-g_\alpha^{\prime\prime}\mathcal{R}_{\alpha,2}\\
&+&\sum_{\beta\neq\alpha} \left[(-1)^{\beta}g_\beta^\prime\mathcal{R}_{\beta,1}-
g_\beta^{\prime\prime}\mathcal{R}_{\beta,2}\right] -\sum_{\beta}\xi_\beta,  \nonumber
\end{eqnarray}
where
\[\mathcal{N}(\phi)=W^\prime(g_\ast+\phi)-
 W^\prime(g_\ast)-W^{\prime\prime}(g_\ast)\phi=O\left(\phi^2\right),\]
\[\mathcal{I}=W^\prime(g_\ast)-\sum_{\beta}
 W^\prime(g_\beta),\]
while for each $\beta$, in the Fermi coordinates with respect to $\Gamma_\beta$,
\[\xi_\beta(r,t)=\bar{\xi}\left((-1)^{\beta}(t-h_\beta(r))\right),\]
\[
\mathcal{R}_{\beta,1}(r,t):=H_\beta(r,t)+\Delta_{\beta,t} h_\beta(r),
\]
\[\mathcal{R}_{\beta,2}(r,t):=|\nabla_{\beta,t} h_\beta(r)|^2.\]

As in \cite[Lemma 4.6]{18}, because $u=0$ on $\Gamma_\alpha$, $h_\alpha$ can be controlled by $\phi$ in the following way.
\begin{lem}\label{control on h_0}
For each $\alpha$ and $r>R_2$, we have
\begin{equation}\label{control on h 1}
 \|h_\alpha\|_{C^{2,1/2}(r,+\infty)}\lesssim\|\phi\|_{C^{2,1/2}(\mathcal{C}_r^c)}+\max_{(r,+\infty)}e^{-\sqrt{2} D_\alpha},
\end{equation}
\begin{equation}\label{control on h_2}
\max_\alpha\| h_\alpha^\prime\|_{C^{1,1/2}(r,+\infty)} \lesssim \|\phi_r\|_{C^{1,1/2}(\mathcal{C}_r^c)}
+r^{-1/6}M(r) .
\end{equation}
\end{lem}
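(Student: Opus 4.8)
The plan is to use the one piece of information not yet exploited, namely that $u$ vanishes identically on $\Gamma_\alpha$. In the Fermi coordinates attached to $\Gamma_\alpha$ the curve is exactly $\{t=0\}$, so $u(r,0)=0$ for all $r>R_2$, and since $\phi=u-g_\ast$ this is the scalar identity
\[
g_\ast(r,0)=-\phi(r,0),\qquad r>R_2,
\]
an equation between functions of $r$ alone. First I would isolate the explicit dependence of the left side on $h_\alpha$. By definition $g_\alpha(r,0)=\bar g\big(-(-1)^\alpha h_\alpha(r)\big)$, and for $r$ large $\bar g$ coincides with $g$ near the origin, so this term equals $-(-1)^\alpha g'(0)\,h_\alpha(r)+O(h_\alpha^2)$ with $g'(0)>0$. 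Each remaining summand $g_\beta\mp(-1)^\beta$ $(\beta\neq\alpha)$ is evaluated at a point of $\Gamma_\alpha$ whose signed distance to $\Gamma_\beta$ is, by Lemma \ref{lem comparison of distances}, comparable to $\pm D_\alpha^\pm(r)$; using the exponential convergence $g(t)\mp1=O(e^{-\sqrt2|t|})$ recorded in the Introduction, together with the uniform lower bound on the spacing of the ends to sum the tail as a convergent series dominated by its $\beta=\alpha\pm1$ terms, I would show that the whole remainder $E_\alpha(r):=g_\ast(r,0)-g_\alpha(r,0)$ (which depends on the other $h_\beta$, treated as controlled data through Lemma \ref{lem orthogonal condition}) satisfies $|E_\alpha(r)|\lesssim e^{-\sqrt2 D_\alpha(r)}$, with analogous bounds for its $r$-derivatives.

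With this decomposition the identity reads $-(-1)^\alpha g'(0)\,h_\alpha+O(h_\alpha^2)=-\phi(r,0)-E_\alpha(r)$. Since the map $h\mapsto\bar g(-(-1)^\alpha h)$ has non-vanishing derivative $-(-1)^\alpha g'(0)$ at $h=0$, and $h_\alpha(r)\to0$ as $r\to+\infty$ by Lemma \ref{lem orthogonal condition}, the inverse function theorem lets me write $h_\alpha(r)=\Psi\big(-\phi(r,0)-E_\alpha(r)\big)$ for a fixed smooth $\Psi$ with $\Psi(0)=0$. The pointwise bound $|h_\alpha(r)|\lesssim|\phi(r,0)|+e^{-\sqrt2D_\alpha(r)}$ is then immediate. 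To upgrade it to the full estimate \eqref{control on h 1} I would differentiate the relation up to second order and estimate the Hölder seminorms by the chain rule: every $\phi$-contribution is bounded by $\|\phi\|_{C^{2,1/2}(\mathcal C_r^c)}$, since restriction to $\{t=0\}$ only decreases the norm, while the derivatives of $E_\alpha$ are controlled, via Lemma \ref{lem decay rate of derivatives I} and Lemma \ref{lem comparison of distances}, by the same exponential $\max_{(r,+\infty)}e^{-\sqrt2D_\alpha}$.

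The sharper statement \eqref{control on h_2} follows by isolating the first derivative: $h_\alpha'=\Psi'(\cdot)\big(-\partial_r\phi(r,0)-E_\alpha'(r)\big)$. The leading term is governed by $\partial_r\phi$, hence by $\|\phi_r\|_{C^{1,1/2}(\mathcal C_r^c)}$. The quantity $E_\alpha'$ is obtained by differentiating the off-diagonal terms: changing between the Fermi frames of $\Gamma_\alpha$ and $\Gamma_\beta$, $\partial_r$ produces the rates of change of the nearest-point projection and of the signed distance, each multiplied by the exponentially small $\bar g'(\cdots)\lesssim e^{-\sqrt2 D_\alpha}$. Bounding these rates by the derivative estimates of Lemma \ref{lem decay rate of derivatives I}, the already-established \eqref{control on h 1}, and the $O(r^{-1/6})$ error of Lemma \ref{lem comparison of distances}, all sub-leading prefactors are $\lesssim r^{-1/6}$, giving $|E_\alpha'|\lesssim r^{-1/6}M(r)$; differentiating once more, taking Hölder seminorms, and passing to $\max_\alpha$ yields \eqref{control on h_2}.

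The main obstacle I anticipate is precisely the bookkeeping of the off-diagonal terms $E_\alpha$ and their derivatives, where one must pass between the Fermi frames of $\Gamma_\alpha$ and of each $\Gamma_\beta$ and show that the resulting projection rates are subordinate to $e^{-\sqrt2D_\alpha}$ with the stated decaying prefactors — in particular, tracing the $O(r^{-1/6})$ of Lemma \ref{lem comparison of distances} through the differentiation to produce the factor $r^{-1/6}M(r)$. Carrying this out \emph{uniformly in $\alpha$} despite the possibly infinite family of ends, which is where the uniform spacing lower bound and the geometric summation of the exponential tails enter, and propagating the $C^{2,1/2}$ Hölder regularity through the composition with $\Psi$, is the technical heart; the rest is a routine application of the inverse function theorem together with the asymptotics of Lemmas \ref{lem decay rate of derivatives I}--\ref{lem comparison of distances}.
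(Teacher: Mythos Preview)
Your approach is essentially the one the paper takes: the text preceding the lemma says it follows ``as in \cite[Lemma 4.6]{18}, because $u=0$ on $\Gamma_\alpha$,'' and that is precisely the identity $g_\ast(r,0)=-\phi(r,0)$ you exploit, followed by inverting $h\mapsto \bar g(-(-1)^\alpha h)$ and bounding the off-diagonal contributions by $e^{-\sqrt{2}D_\alpha}$ via Lemma~\ref{lem comparison of distances}. The only point worth flagging is the mild self-reference when you differentiate $E_\alpha$: the $h_\beta'$ for $\beta\neq\alpha$ reappear on the right, but since they come multiplied by $e^{-\sqrt{2}D_\alpha}\leq M(r)\to 0$, taking $\max_\alpha$ on both sides and absorbing this small term into the left is exactly what produces the uniform estimate \eqref{control on h_2}; you allude to this with ``passing to $\max_\alpha$,'' which is the right move.
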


\subsection{Toda system}

As in \cite{9, 17}, multiplying \eqref{error equation} by $g_\alpha^\prime$ and integrating in $t$ leads to
\begin{equation}\label{Toda system}
H_\alpha+\Delta_{\alpha,0}h_\alpha
=\frac{2 A^2}{\sigma_0} \left(e^{-\sqrt{2}D_{\alpha}^-}- e^{-\sqrt{2}D_{\alpha}^+}\right)+E_\alpha,
\end{equation}
where $E_\alpha$ is a higher order term.  More precisely, we have
\begin{lem}\label{lem 5.1}
For any $ r>2R_2$,
\begin{eqnarray}\label{5.1}
|E_\alpha(r)|
&\lesssim& r^{-3}+ r^{-\frac{1}{2}}M\left(r-100 \log r\right) +M\left(r-100 \log r\right)^{\frac{4}{3}}  \nonumber\\
&+&\max_\alpha\big\|H_\alpha+\Delta_{\alpha,0}h_\alpha\big\|_{C^{1/2}(r-100\log r,+\infty)}^2+ \|\phi\|_{C^{2,1/2}(r-100\log r,+\infty)}^2.
\end{eqnarray}
\end{lem}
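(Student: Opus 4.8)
The goal of Lemma~\ref{lem 5.1} is to estimate the error term $E_\alpha$ obtained when we multiply the $\phi$-equation \eqref{error equation} by $g_\alpha'$ and integrate in $t$ over $\R$. The plan is to go term by term through the right-hand side of \eqref{error equation}, test each against $g_\alpha'$, and collect the contributions into the five types of bounds appearing in \eqref{5.1}. The starting point is the definition of $h_\alpha$ in Lemma~\ref{lem orthogonal condition}: the orthogonality condition $\int[u-g_\ast]\bar g'(t-h_\alpha)\,dt=0$ is precisely what kills the leading-order term $\int W''(g_\ast)\phi\,g_\alpha'\,dt$ modulo the translation of $g_\alpha'$ versus $\bar g'$, so the main linear term does not produce an uncontrolled contribution. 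The quantity $E_\alpha$ is then the residual after the principal interaction term $\frac{2A^2}{\sigma_0}(e^{-\sqrt2 D_\alpha^-}-e^{-\sqrt2 D_\alpha^+})$ has been extracted from $\mathcal I$.

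**The main contributions.** I would organize the estimate as follows. First, the curvature/metric errors: the terms $\Delta_{\alpha,t}\phi-\Delta_{\alpha,0}\phi$ and the $O(|t|r^{-3})$ part of $H_\alpha(r,t)$ from \eqref{mean curvature expansion}, together with the $\mathcal R_{\alpha,2}$ term $|\nabla h_\alpha|^2$, contribute the purely geometric rate $r^{-3}$, using Lemma~\ref{lem error of Laplace}, the bound \eqref{bound on second fundamental form}, and the exponential decay of $g_\alpha'$ in $t$ which makes $\int|t|\,g_\alpha'\,dt$ finite. Second, the interaction term $\mathcal I=W'(g_\ast)-\sum_\beta W'(g_\beta)$ must be expanded: the nearest two neighbors $\beta=\alpha\pm1$ give the explicit main term via the one-dimensional computation of $\int W'''(\pm1)\cdots$ producing $e^{-\sqrt2 D_\alpha^\pm}$ with constant $2A^2/\sigma_0$ (this uses the precise asymptotics of $g$ stated in the introduction), while the tail $\sum_{|\beta-\alpha|\ge2}$ and the higher-order Taylor remainder give the $M(r-100\log r)^{4/3}$ term by summing a geometrically decaying series in the distances. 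The shift $r-100\log r$ appears because the Fermi coordinate system for $\Gamma_\beta$ and the support of $g_\alpha'$ only overlap within a logarithmic window, so the relevant maximum of $e^{-\sqrt2 D}$ must be taken slightly inside. Third, the nonlinear term $\mathcal N(\phi)=O(\phi^2)$ and the cross terms $\sum_{\beta\ne\alpha}[\cdots]$ give the quadratic $\|\phi\|_{C^{2,1/2}}^2$ contribution, and the term $(-1)^\alpha g_\alpha'\mathcal R_{\alpha,1}=(-1)^\alpha g_\alpha'(H_\alpha+\Delta_{\alpha,t}h_\alpha)$ must be handled carefully: after subtracting off $\Delta_{\alpha,0}h_\alpha$ (which combines with $H_\alpha$ on the left-hand side of \eqref{Toda system}) the remainder is quadratic in $H_\alpha+\Delta_{\alpha,0}h_\alpha$ itself, yielding the term $\|H_\alpha+\Delta_{\alpha,0}h_\alpha\|_{C^{1/2}}^2$. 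The remaining cut-off error $\sum_\beta\xi_\beta$ is harmless since $|\bar\xi|\lesssim r^{-4}$.

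**Where the mixed term $r^{-1/2}M$ comes from, and the main obstacle.** The term $r^{-\frac12}M(r-100\log r)$ is the most delicate: it arises from the interaction between the metric error (size $r^{-3/2}$, from \eqref{bound on second fundamental form} and Lemma~\ref{lem error of Laplace}) and the interaction/neighbor terms (size $M\sim e^{-\sqrt2 D}$), i.e. it is a genuine cross term, and separately from applying Lemma~\ref{control on h_0} and Lemma~\ref{control on h_2} to replace $h_\alpha'$ and $h_\alpha''$ by $\phi$ and $M$. The main obstacle I anticipate is bookkeeping the $h_\alpha$-dependence consistently: $E_\alpha$ nominally involves $h_\alpha,h_\alpha',h_\alpha''$ through $\mathcal R_{\alpha,1},\mathcal R_{\alpha,2}$ and the Fermi-coordinate expansions, but \eqref{5.1} is stated purely in terms of $\phi$, $M$, and $H_\alpha+\Delta_{\alpha,0}h_\alpha$. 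So the crux is to feed in the control \eqref{control on h 1}--\eqref{control on h_2} at exactly the right places to eliminate explicit $h_\alpha$-norms without losing the claimed rates, and to verify that the infinitely many neighbor sums converge, controlled uniformly by $M$. This is where following the template of \cite[Lemma 8.3]{17} and \cite[Section 7]{18} is essential; the novelty here is purely that the sum over $\beta\in\Z$ is infinite, which is absorbed by the uniform exponential decay built into the definition of $M(r)$.
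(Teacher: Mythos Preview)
Your sketch is correct and aligns with the paper's approach. The paper does not give a detailed proof of Lemma~\ref{lem 5.1} here: it simply defers to the derivation in \cite{18} (and the parallel ones in \cite{9,17}), adding only the remark that the improved bound $|A_\alpha|=O(r^{-3/2})$ from \eqref{bound on second fundamental form} should be inserted wherever the second fundamental form appears; your term-by-term testing of \eqref{error equation} against $g_\alpha'$ is exactly the intended unpacking of that derivation.
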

Here it is still useful to note that by \eqref{bound on second fundamental form}, now we can take the upper bound on  the second fundamental form to be $O\left(r^{-3/2}\right)$ when using the derivation  in \cite{18}.

\subsection{Estimates on $\phi$}

As in \cite{Gui-Liu-Wei, Wang-Wei3}, we have
\begin{lem}\label{lem estimate on error function 1}
There exist two constants $C$  such that for all $r$ large,
\begin{eqnarray*}
 &&\max_\alpha\|H_\alpha+\Delta_{\alpha,0}h_\alpha\|_{C^{1/2}(r ,+\infty)}+\|\phi\|_{C^{2,1/2}(\mathcal{C}_{r }^c)} \\
&\leq&\frac{1}{2}\left[\max_\alpha\|H_\alpha+\Delta_{\alpha,0}h_\alpha\|_{C^{1/2}(r-100\log r ,+\infty)}+\|\phi\|_{C^{2,1/2}(\mathcal{C}_{r-100\log r}^c)}\right] \\
&+&CM\left(r-100 \log r\right)+ Cr^{-3}.
\end{eqnarray*}
\end{lem}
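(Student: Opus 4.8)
The plan is to establish a single coupled self-improvement inequality for the two quantities on the left. Write $P(r):=\max_\alpha\|H_\alpha+\Delta_{\alpha,0}h_\alpha\|_{C^{1/2}(r,+\infty)}$ and $S(r):=\|\phi\|_{C^{2,1/2}(\mathcal{C}_r^c)}$, and abbreviate $r':=r-100\log r$. The goal is the inequality $P(r)+S(r)\le \tfrac12\big[P(r')+S(r')\big]+CM(r')+Cr^{-3}$. The three ingredients are the projected linear (Schauder) theory applied to the $\phi$-equation \eqref{error equation}, the Toda estimate of Lemma \ref{lem 5.1}, and the control of $h_\alpha$ by $\phi$ from Lemma \ref{control on h_0}. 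The orthogonality normalization of Lemma \ref{lem orthogonal condition} is what lets these fit together, since it places $\phi$, in each set of Fermi coordinates, in the $L^2$-orthogonal complement of the approximate kernel $\overline g'$.

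First I would bound $S(r)$. Moving $W^{\prime\prime}(g_\ast)\phi$ to the left of \eqref{error equation} exhibits $\phi$ as solving a transition-layer linear equation whose one-dimensional model $\partial_{tt}-W^{\prime\prime}(g)$ is coercive, with a positive spectral gap, on the orthogonal complement of $g'$ (recall $W^{\prime\prime}(\pm1)=2$). Weighted Schauder estimates on the strips $\mathcal{M}_\alpha$ then bound $S(r)$ by the $C^{1/2}$-norm of the right-hand side of \eqref{error equation} over a slightly larger region. The decisive point is that this right-hand side is to be measured \emph{in the orthogonal complement of} $g_\alpha'$: the $t$-independent part of $(-1)^\alpha g_\alpha'\mathcal{R}_{\alpha,1}$, which is parallel to $g_\alpha'$ and of size $P$, is projected out -- it is exactly what produces the Toda left-hand side $H_\alpha+\Delta_{\alpha,0}h_\alpha$ -- so it does \emph{not} feed back into $\phi$. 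What survives from that term is only the $t$-dependent remainder, namely the $O(|t|r^{-3})$ correction in the mean-curvature expansion \eqref{mean curvature expansion} and the $O(r^{-3/2})$ Laplacian error of Lemma \ref{lem error of Laplace}. The remaining contributions are all either exponentially small in the inter-end distance, hence $\lesssim M$ (the interaction $\mathcal{I}$ and the cross terms indexed by $\beta\neq\alpha$), quadratic, hence $\lesssim S^2$ after invoking Lemma \ref{control on h_0} (the terms $\mathcal{N}(\phi)$ and $\mathcal{R}_{\alpha,2}=|\nabla_{\alpha,t} h_\alpha|^2$), or of size $r^{-4}$ (the cutoff error $\xi_\beta$). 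Altogether $S(r)\lesssim CM(r')+Cr^{-3}+\epsilon(r)\,S(r')$ with $\epsilon(r)\to0$.

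Next I would bound $P(r)$ directly from the Toda system \eqref{Toda system}: its explicit leading term $\tfrac{2A^2}{\sigma_0}\big(e^{-\sqrt2 D_\alpha^-}-e^{-\sqrt2 D_\alpha^+}\big)$ is $\lesssim M(r)$ by the definition of $M$, while the remainder $E_\alpha$ is estimated by Lemma \ref{lem 5.1}, whose bound is again of the form $r^{-3}$ plus lower-order powers of $M$ plus $P(r')^2+S(r')^2$. Combining the two displays and using that $P$ and $S$ are already known to tend to $0$ (so the quadratic terms $S^2$, $P^2$ and the $r^{-3/2}$-weighted terms may be bounded by $\epsilon(r)\,[P(r')+S(r')]$ and absorbed), one is left with $P(r)+S(r)\le \epsilon(r)\,[P(r')+S(r')]+CM(r')+Cr^{-3}$; taking $r$ large enough that $\epsilon(r)\le\tfrac12$ finishes the estimate. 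The inward shift by $100\log r$ is forced upon us rather than chosen freely: it must be at least as large as the $t$-support $O(\log r)$ of the cutoff defining $\overline g$ (so that the Fermi-coordinate data about neighbouring ends are captured) and as the region over which the Toda remainder in Lemma \ref{lem 5.1} is measured; the constant $100$ is simply a safe choice.

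I expect the heart of the matter, and the step most prone to error, to be the orthogonal-projection bookkeeping that suppresses the only potentially order-one coupling -- the feedback of $P$ into the $\phi$-equation through $g_\alpha'\mathcal{R}_{\alpha,1}$. One must verify carefully that, after projecting off $g_\alpha'$, every surviving coefficient genuinely carries a factor tending to zero (a power of $r^{-1}$, a factor of $M$, or a smallness coming from $P,S\to0$), uniformly in $\alpha$. The uniformity across the possibly infinitely many ends is guaranteed by the finite-width, diagonally dominant structure of the linearized map noted after Lemma \ref{lem orthogonal condition}, which confines the inter-end coupling to the exponentially small $M$-terms; keeping track of this while simultaneously closing both the Schauder and the Toda estimates on the common enlarged region $\{r'>r-100\log r\}$ is where the calibration of the shift length does its work.
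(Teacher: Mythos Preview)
Your proposal is correct and follows the same approach the paper defers to its references \cite{9,17}: the paper gives no self-contained proof of this lemma, writing only ``As in \cite{9,17}, we have'' before the statement, and the method you outline---projected Schauder theory for $\phi$ on the orthogonal complement of $g_\alpha'$ (using Lemma~\ref{lem orthogonal condition}) combined with the Toda estimate \eqref{Toda system} and Lemma~\ref{lem 5.1}, then absorbing the quadratic and $o(1)$-weighted terms---is precisely the iteration scheme of those works. Your identification of the orthogonal-projection step as the place where the potentially order-one feedback of $P$ into $S$ is suppressed is exactly the point.
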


As in \cite{9}, after finitely many times of iteration using Lemma \ref{lem estimate on error function 1}, we get a constant $C$ such that  for any $r\geq R_2$,
\[
|H_\alpha(r)+\Delta_{\alpha,0}h_\alpha(r)|+\|\phi\|_{C^{2,1/2}(\mathcal{C}_{r}^c)}\leq  C \left[r^{-3}+M\left(r-100 \log r\right)\right].
\]
By  \cite[Proposition 10.1]{18}),  $M(r)\lesssim r^{-2}\left(\log\log r\right)^2$. Hence
\begin{equation}\label{first bound}
|H_\alpha(r)+\Delta_{\alpha,0}h_\alpha(r)|+\|\phi\|_{C^{2,1/2}(\mathcal{C}_{r}^c)}\leq  C  r^{-2}\left(\log\log r\right)^2.
\end{equation}

Next by \cite[Proposition 7.1]{18},  we get
\begin{equation}\label{improved estimate}
 \|\phi_r\|_{C^{1,1/2}(\mathcal{C}_{r}^c)}\leq  C r^{-2-1/7}.
\end{equation}
In view of Lemma \ref{control on h_0}, we get
\begin{equation}\label{estimate on h}
  \|h_\alpha^\prime\|_{C^{1,1/2}((r,+\infty))}\leq  C r^{-2-1/7}.
\end{equation}

Substituting this into \eqref{Toda system} and applying Lemma \ref{lem decay rate of derivatives I}, we obtain
\begin{equation}\label{Toda 1}
f_\alpha^{\prime\prime}(r)+\frac{n-1}{r}f_\alpha^\prime(r)=\frac{2A^2}{\sigma_0}
\left[e^{-\sqrt{2}\left(f_\alpha(r)-f_{\alpha-1}(r)\right)}-e^{-\sqrt{2}\left(f_{\alpha+1}(r)-f_{\alpha}(r)\right)}\right]
+O\left(r^{-2-\frac{1}{7}}\right).
\end{equation}

By \cite[Proposition 8.1]{18}, we get the following stability condition.
\begin{prop}\label{prop stability}
  For any $\eta\in C_0^\infty (R_2,+\infty) $, we have
\begin{eqnarray}\label{reduction of stability}
&&\frac{4\sqrt{2}A^2}{\sigma_0}\int_{R_2}^{+\infty}e^{-\sqrt{2}\left(f_\alpha(r)-f_{\alpha-1}(r)\right)}\eta(r)^2r^{n-1}dr\\
&\leq&\int_{R_2}^{+\infty}\left[1+Cr^{-\frac{1}{6}}\right]|\eta^\prime(r)|^2r^{n-1}dr+C\int_{R_2}^{+\infty}\eta(r)^2r^{n-2-\frac{1}{8}}dr.\nonumber
\end{eqnarray}
\end{prop}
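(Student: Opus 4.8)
The plan is to deduce the stated weighted inequality from the stability (Sternberg--Zumbrun) inequality \eqref{S-Z inequality} by testing it with a function that moves a finite cluster of the ends $\Gamma_\beta$ relative to one another. Concretely, given $\eta\in C_0^\infty(R_2,+\infty)$ and scalars $\psi_\beta(r)$ (only finitely many nonzero), I would set, in the Fermi coordinates attached to each $\Gamma_\beta$,
\[\varphi:=\sum_\beta \psi_\beta(r)\,\partial_t g_\beta(r,t),\qquad \partial_t g_\beta=(-1)^\beta\bar g'\big((-1)^\beta(t-h_\beta)\big).\]
This is admissible: each $\partial_t g_\beta$ is supported in $\{|t|<16\log r\}$, decays exponentially off $\Gamma_\beta$, and $\psi_\beta\in C_0^\infty(\{r>R_2\})$, so after the harmless patching of the overlapping Fermi charts $\varphi\in C_0^\infty(\mathcal{C}_{R_2}^c)$. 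Heuristically $\varphi$ represents the normal variation $\Gamma_\beta\mapsto\{t=\psi_\beta(r)\}$, so that $\mathcal{Q}(\varphi)\ge0$ is exactly the second variation of the reduced Jacobi--Toda energy and should reproduce the stability of the system \eqref{Toda 1}.

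The computational core is to expand $\mathcal{Q}(\varphi)=\int|\nabla\varphi|^2+W''(u)\varphi^2$ into diagonal and nearest-neighbour pieces. For the diagonal terms I would use that $\partial_t g_\beta$ is an approximate Jacobi field for the single-layer operator, obtained by differentiating \eqref{1d eqn} and feeding it through \eqref{Laplacian}; integrating by parts and using $\int_{\R}(\bar g')^2\,dt=\sigma_0$ together with the area element $\sim r^{n-1}\,dr$ on the axially symmetric $\Gamma_\beta$ yields the leading weighted Dirichlet energy $\sigma_0\int(\psi_\beta')^2 r^{n-1}$. The interaction is then produced by two mechanisms that must be added: the diagonal defect $W''(u)-W''(g_\beta)$, coming from the exponentially small tails of the adjacent profiles near $\Gamma_\beta$, and the genuine cross terms between $\partial_t g_\beta$ and $\partial_t g_{\beta\pm1}$. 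Using the profile asymptotics with the constant $A$ from the introduction and the overlap integral $\int \bar g'(t)\bar g'(t-D)\,dt$, these assemble (as in \cite[Proposition 8.1]{18}) into $-2\sqrt2 A^2\sum_\beta\int e^{-\sqrt2(f_{\beta+1}-f_\beta)}(\psi_{\beta+1}-\psi_\beta)^2 r^{n-1}$, the nearest-neighbour relative-displacement form being the signature of the Toda structure. Thus stability gives
\[2\sqrt2 A^2\sum_\beta\int e^{-\sqrt2(f_{\beta+1}-f_\beta)}(\psi_{\beta+1}-\psi_\beta)^2 r^{n-1}\le \sum_\beta\int(1+Cr^{-1/6})(\psi_\beta')^2 r^{n-1}+(\text{errors}).\]

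To obtain the proposition I would specialize the test vector to $\psi_\alpha=\eta$, $\psi_{\alpha-1}=-\eta$, and $\psi_\beta\equiv0$ otherwise. Across the $(\alpha-1,\alpha)$ gap the relative displacement becomes $\psi_\alpha-\psi_{\alpha-1}=2\eta$, so that gap contributes $2\sqrt2A^2\int e^{-\sqrt2(f_\alpha-f_{\alpha-1})}(2\eta)^2 r^{n-1}=8\sqrt2A^2\int e^{-\sqrt2(f_\alpha-f_{\alpha-1})}\eta^2 r^{n-1}$, while the two remaining gaps $(\alpha,\alpha+1)$ and $(\alpha-2,\alpha-1)$ add only nonnegative terms that I discard; the left side of the reduced inequality is $2\sigma_0\int(\eta')^2 r^{n-1}$ up to the $r^{-1/6}$ correction noted below. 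Dividing by $2\sigma_0$ produces exactly the constant $\tfrac{4\sqrt2A^2}{\sigma_0}$ of the statement. The factor $(1+Cr^{-1/6})$ on the Dirichlet term is the price of replacing the true distance $D_\alpha^\pm$ by $|f_\alpha-f_{\alpha\pm1}|$ (Lemma \ref{lem comparison of distances}) and of the discrepancy between $\Delta_{\alpha,t}$ and $\Delta_{\alpha,0}$ (Lemma \ref{lem error of Laplace}), both of size $r^{-1/6}$ after \eqref{bound on second fundamental form}; the additive remainder $C\int\eta^2 r^{n-2-1/8}$ collects every other error, controlled by the a priori bounds \eqref{first bound}, \eqref{improved estimate}, \eqref{estimate on h} and the $E_\alpha$ estimate \eqref{5.1}.

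The main obstacle is the second paragraph: honestly extracting the coefficient $2\sqrt2A^2$ of the interaction and showing that the leftover terms genuinely fit into the advertised $r^{-1/6}$ and $r^{-1/8}$ orders. This requires simultaneously controlling the curvature via \eqref{bound on second fundamental form}, the layer separation, and the error function $\phi$ through the coupled relations \eqref{error equation} and \eqref{first bound}--\eqref{estimate on h}; in particular the cross terms between non-adjacent layers, together with the contributions of $\bar\xi$ and of the $\mathcal{R}_{\beta,i}$, must all be shown to be of strictly lower order. This is precisely the bookkeeping carried out in \cite[Proposition 8.1]{18}, and because the blow-up limits here are essentially two dimensional (as noted after Theorem \ref{thm curvature decay}) that analysis applies with no change, the slightly stronger curvature decay $O(r^{-3/2})$ only improving the error exponents.
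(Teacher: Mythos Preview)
Your proposal is correct and follows essentially the same route as the paper, which simply cites \cite[Proposition 8.1]{18}; you have in fact sketched the content of that reference, including the test function built from displaced one-dimensional profiles, the extraction of the Dirichlet term $\sigma_0\int(\psi_\beta')^2 r^{n-1}$, the nearest-neighbour interaction with coefficient $2\sqrt{2}A^2$, and the specialization $\psi_\alpha=\eta$, $\psi_{\alpha-1}=-\eta$ that produces the constant $4\sqrt{2}A^2/\sigma_0$. One small terminological slip: you invoke the Sternberg--Zumbrun inequality \eqref{S-Z inequality}, but what you actually use (and should use) is the standard stability quadratic form $\mathcal{Q}(\varphi)=\int|\nabla\varphi|^2+W''(u)\varphi^2\ge0$; the two conditions are equivalent, but the computation you outline is with the latter.
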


\section{The case $3\leq n\leq 9$: Proof of Theorem \ref{main result 1.2}}\label{sec high dim case}
\setcounter{equation}{0}

In this section we keep the same setting as in the previous section, with the additional assumption that $3\leq n\leq 9$.
In order to prove Theorem \ref{main result 1.2}, we argue by contradiction and assume there are at least two ends of $u$. We show this assumption leads to a contradiction if $3\leq n\leq 9$.

Take two adjacent ends $\Gamma_{\alpha-1}$ and $\Gamma_\alpha$. Let $v_\alpha:=f_\alpha-f_{\alpha-1}$ and $V_\alpha:=e^{-\sqrt{2}v_\alpha}$. By \eqref{Toda 1} we get a constant $\mu\in(0,1/8)$ such that
\begin{equation}\label{Toda 2}
  v_\alpha^{\prime\prime}(r)+\frac{n-1}{r}v_\alpha^\prime(r)\leq \frac{4A^2}{\sigma_0}
 e^{-\sqrt{2}v_\alpha(r) }+O\left(r^{-2-\mu}\right), \quad \mbox{in } (R_2,+\infty).
\end{equation}
Consequently,
\begin{equation}\label{Toda 3}
  -V_\alpha^{\prime\prime} -\frac{n-1}{r}V_\alpha^\prime \leq \frac{4\sqrt{2}A^2}{\sigma_0}
 V_\alpha^2-V_\alpha^{-1}\big|V_\alpha^\prime\big|^2+O\left(r^{-2-\mu}\right)V_\alpha, \quad \mbox{in } (R_2,+\infty).
\end{equation}

For any $q\in[1/2,2)$ and $\eta\in C_0^\infty(R_2,+\infty)$, multiplying \eqref{Toda 3} by $V_\alpha(r)^{2q-1}\eta(r)^2 r^{n-1}$ and integrating by parts leads to
\begin{eqnarray}\label{integral estimate 1}
&&2q\int_{R_2}^{+\infty}V_\alpha(r)^{2q-2}\big|V_\alpha^\prime(r)\big|^2\eta(r)^2r^{n-1}dr \nonumber\\
&\leq &\frac{4\sqrt{2}A^2}{\sigma_0}\int_{R_2}^{+\infty}V_\alpha(r)^{2q+1}\eta(r)^2r^{n-1}dr\\
&+& C\int_{R_2}^{+\infty}V_\alpha(r)^{2q}\left[\big|\eta^\prime(r)\big|^2+\eta(r)\big|\eta^{\prime\prime}(r)\big|+\eta(r)^2r^{-2-\mu}\right]r^{n-1}dr. \nonumber
\end{eqnarray}
On the other hand, substituting $V_\alpha^q\eta$ as test function into \eqref{reduction of stability} leads to
\begin{eqnarray}\label{integral estimate 2}
&&\frac{4\sqrt{2} A^2}{\sigma_0}\int_{R_2}^{+\infty}V_\alpha(r)^{2q+1}\eta(r)^2r^{n-1}dr \nonumber\\
&\leq& q^2\left[1+CR_2^{-\frac{1}{6}}\right]\int_{R_2}^{+\infty}V_\alpha(r)^{2q-2}V_\alpha^\prime(r)^2\eta(r)^2r^{n-1}dr\\
&+& C\int_{R_2}^{+\infty}V_\alpha(r)^{2q}\left[\big|\eta^\prime(r)\big|^2+\eta(r)\big|\eta^{\prime\prime}(r)\big|+\eta(r)^2r^{-2-\mu }\right]r^{n-1}dr. \nonumber
\end{eqnarray}

Combining \eqref{integral estimate 1} and \eqref{integral estimate 2}, if $R_2$ is sufficiently large, we get a constant $C(q)<+\infty$ such that
\begin{eqnarray}\label{integral estimate 3}
&&\int_{R_2}^{+\infty}V_\alpha(r)^{2q+1}\eta(r)^2r^{n-1}dr \\
 &\leq&  C(q)\int_{R_2}^{+\infty}V_\alpha(r)^{2q}\left[\big|\eta^\prime(r)\big|^2+\eta(r)\big|\eta^{\prime\prime}(r)\big|+\eta(r)^2r^{-2-\mu}\right]r^{n-1}dr. \nonumber
\end{eqnarray}
If $0\leq \eta \leq 1$, following Farina \cite{7},   replacing $\eta$ by $\eta^m$ for some $m\gg 1$ and then applying H\"{o}lder inequality to \eqref{integral estimate 3} we get
\begin{equation}\label{integral estimate 4}
 \int_{R_2}^{+\infty}V_\alpha(r)^{2q+1}\eta(r)^{2m}r^{n-1}dr \leq C(q)\int_{R_2}^{+\infty}\left[\big|\eta^\prime(r)\big|^2+ \big|\eta^{\prime\prime}(r)\big|+ r^{-2-\mu}\right]^{2q+1}r^{n-1}dr.
\end{equation}

For any $R>2R_2$, take $\eta_R\in C_0^\infty(R_2,2R)$ such that $0\leq \eta_R\leq 1$, $\eta_R\equiv 1$ in $(2R_2,R)$, $|\eta_R^\prime|^2+|\eta_R^{\prime\prime}|\leq 16R^{-2}$ in $(R,2R)$.
Substituting $\eta_R$ into \eqref{integral estimate 4}, we get
\begin{equation}\label{integral estimate 5}
 \int_{2R_2}^{R}V_\alpha(r)^{2q+1}r^{n-1}dr \leq C +C R^{n-2(2q+1)}.
\end{equation}
Since $n\leq 9$, we can take $2q+1=n/2$. After letting $R\to+\infty$ in \eqref{integral estimate 5} we arrive at
\begin{equation}\label{integral estimate 6}
 \int_{2R_2}^{+\infty}V_\alpha(r)^{\frac{n}{2}}r^{n-1}dr \leq C.
\end{equation}
As in Dancer-Farina \cite{dancer2009classification}, this implies that
\[\lim_{r\to+\infty}r^2e^{- \sqrt{2} v_\alpha(r)}=0,\]
which then leads to a contradiction by applying \eqref{Toda 2} exactly in the same way as in  \cite{dancer2009classification} (see also \cite{Wangstable2018}), if $n\geq 3$.

In other words, there is only one end of $u$. The one dimensional symmetry of $u$ follows by applying the main results of \cite{10} and \cite{15}, because now we have the energy growth bound from Lemma \ref{lem energy growth localized}.

\section{The case $n=2$: Proof of Theorem \ref{main result 1.1}}\label{sec 3 dim case}
\setcounter{equation}{0}

In this section $u$  denotes an axially symmetric solution of \eqref{Allen-Cahn} in $\R^3$, which is stable outside $B_{R_\ast}^2(0)\times(-R_\ast,R_\ast)$. Hence there exists a positive function $\varphi\in C^2(\R^3)$ such that
\begin{equation}\label{linearized eqn}
\Delta \varphi=W^{\prime\prime}(u)\varphi
\end{equation}
outside $B_{R_\ast}^2(0)\times(-R_\ast,R_\ast)$.

By a direct differentiation we see $u_z$ satisfies the linearized equation
\eqref{linearized eqn}. We will show
\begin{lem}\label{lem nodal domain}
Any nodal domain of $u_z$ is not disjoint from $B_{R_\ast}^2(0)\times(-R_\ast,R_\ast)$.
\end{lem}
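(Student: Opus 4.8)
The plan is to argue by contradiction. Suppose some nodal domain $\Omega$ of $u_z$ is disjoint from the stability region $B_{R_\ast}^2(0)\times(-R_\ast,R_\ast)$. Then $\Omega$ lies entirely in the set where the positive function $\varphi$ of \eqref{linearized eqn} solves the linearized equation, and I may assume $u_z>0$ in $\Omega$, with $u_z=0$ on $\partial\Omega$. Since both $u_z$ and $\varphi$ solve \eqref{linearized eqn} in $\Omega$ and $\varphi>0$ there, I would introduce the quotient $\sigma:=u_z/\varphi$, which is positive in $\Omega$ and vanishes on $\partial\Omega$. Using the two equations, a direct computation gives the divergence-form identity
\[
\mathrm{div}\!\left(\varphi^2\nabla\sigma\right)=\varphi\,\Delta u_z-u_z\,\Delta\varphi=\varphi\,W^{\prime\prime}(u)u_z-u_z\,W^{\prime\prime}(u)\varphi=0\qquad\text{in }\Omega,
\]
so that $\sigma$ is a nonnegative, nonconstant solution of a weighted harmonic equation, vanishing on $\partial\Omega$.

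When $\Omega$ is bounded the conclusion is immediate. The operator $\mathrm{div}(\varphi^2\nabla\,\cdot\,)$ obeys the strong maximum principle, so $\sigma$ attains its maximum over $\overline\Omega$ on $\partial\Omega$; but $\sigma=0$ there while $\sigma>0$ in the interior, a contradiction. Equivalently, testing the identity against $\sigma$ and integrating by parts yields $\int_\Omega\varphi^2|\nabla\sigma|^2=0$, hence $\sigma\equiv 0$ and $u_z\equiv0$ in $\Omega$, which is absurd.

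The main obstacle is an \emph{unbounded} $\Omega$, which genuinely occurs since nodal domains of $u_z$ run off to infinity between the ends. Here I would multiply the identity by $\sigma\psi_R^2$, with $\psi_R$ a cutoff equal to $1$ on $B_R$, supported in $B_{2R}$, $|\nabla\psi_R|\le C/R$, and integrate by parts; after Cauchy–Schwarz and using $\varphi^2\sigma^2=u_z^2$ this gives
\[
\int_\Omega\varphi^2|\nabla\sigma|^2\psi_R^2\ \le\ 4\int_\Omega\varphi^2\sigma^2|\nabla\psi_R|^2\ \le\ \frac{C}{R^2}\int_{(B_{2R}\setminus B_R)\cap\Omega}u_z^2.
\]
The proof then reduces to controlling $R^{-2}\int_{B_{2R}\setminus B_R}u_z^2$. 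This is exactly the delicate point: the crude bound $|u_z|\le|\nabla u|\le C$ only yields $R^{-2}\int_{B_{2R}\setminus B_R}u_z^2\lesssim R$, which is useless, so a genuinely quadratic growth bound for $\int_{B_R}u_z^2$ is needed.

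To close this I would invoke a Liouville-type theorem of Berestycki–Caffarelli–Nirenberg for the weighted operator $\mathrm{div}(\varphi^2\nabla\,\cdot\,)$: any solution $\sigma$ of $\mathrm{div}(\varphi^2\nabla\sigma)=0$ satisfying the quadratic growth $\int_{B_R}(\varphi\sigma)^2=\int_{B_R}u_z^2\le CR^2$ must be constant, whence $\sigma\equiv 0$ by its boundary values and $u_z\equiv0$ in $\Omega$, a contradiction. The required quadratic bound on $\int_{B_R}u_z^2$ is furnished by the area growth of Corollary \ref{coro area growth} together with the localized energy bound of Lemma \ref{lem energy growth localized}: by Corollary \ref{coro 3.1}, $u_z$ is exponentially small away from the union of ends $\cup_\alpha\Gamma_\alpha$, while across each end the transversal integral of $u_z^2$ is comparable to $\int g^\prime(t)^2\,dt$, so that the contribution of each $\mathcal{M}_\alpha$ meeting $B_R$ is $O(R^n)=O(R^2)$. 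Assembling these pieces is the technical heart of the argument; the substitution $\sigma=u_z/\varphi$ and the maximum principle do the conceptual work, and the curvature and geometry estimates of Sections \ref{sec curvature decay}–\ref{sec geometry of ends} supply the growth control that makes the Liouville step applicable.
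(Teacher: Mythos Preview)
Your setup is correct and matches the paper: the quotient $\sigma=u_z/\varphi$ solves $\mathrm{div}(\varphi^2\nabla\sigma)=0$ in $\Omega$, and the Liouville theorem for this degenerate equation (references \cite{8,2} in the paper) reduces the problem to establishing the quadratic growth bound $\int_{\Omega\cap B_R}u_z^2\le CR^2$. This is exactly the content of Lemma~\ref{lem Liouville} in the paper.

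The gap is in how you obtain that quadratic bound. You write that ``the contribution of each $\mathcal{M}_\alpha$ meeting $B_R$ is $O(R^2)$'' and that assembling these pieces is routine. But at this point in the argument \emph{it is not yet known that $u$ has finitely many ends}; proving that is the whole purpose of this section. Lemma~\ref{lem energy growth localized} gives $\int_{\mathcal{M}_\alpha\cap B_R}|\nabla u|^2\le C_\alpha R^2$ with a constant depending on $\alpha$, and since the ends $\Gamma_\alpha$ are separated by a fixed distance on $\partial\mathcal{C}_{R_2}$, the number of them intersecting $B_R$ can be of order $R$. Summing blindly over all $\alpha$ therefore yields only $O(R^3)$, which is useless for the Liouville step. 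Without further input you have no control on how many ends the given nodal domain $\Omega$ might contain.

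The paper closes this gap with a topological argument that you are missing entirely. Using Lemma~\ref{lem ends of a nodal domain}, each end $\Gamma_\alpha$ sits inside a ``finger'' $\Omega_\alpha\subset\mathcal{N}_\alpha$ where $u_z$ has a definite sign. The key Claim is that the given $\Omega$ can contain \emph{at most one} such $\Omega_\alpha$. If it contained two, say $\Omega_\alpha$ and $\Omega_\beta$ with $\beta\ge\alpha+2$, then a curve in $\Omega$ joining them, together with $\Gamma_\alpha$ and $\Gamma_\beta$, forms an unbounded Jordan curve separating the intermediate nodal domain $\widetilde{\Omega}\supset\Omega_\gamma$ (with $\alpha<\gamma<\beta$) from $B_{R_\ast}^2(0)\times(-R_\ast,R_\ast)$. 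Then $\widetilde{\Omega}$ is trapped between finitely many ends, so the quadratic bound \emph{does} hold for $\widetilde{\Omega}$ by summing Lemma~\ref{lem energy growth localized} over $\alpha,\alpha+1,\dots,\beta$, and Lemma~\ref{lem Liouville} applied to $\widetilde{\Omega}$ gives a contradiction. Once the Claim is established, $\Omega\cap\{r>R_3\}$ is confined between $\Gamma_{\alpha-1}$ and $\Gamma_{\alpha+1}$, the quadratic bound follows from three applications of Lemma~\ref{lem energy growth localized}, and your Liouville argument finishes the proof. This two-step use of the Liouville lemma---first to trap an auxiliary domain, then to kill $\Omega$ itself---is the idea your proposal lacks.
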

Before proving this lemma, let us first present some technical results.

Keeping notations as in Section \ref{sec geometry of ends} and Section \ref{sec refined asymptotics}, we define
for each $\alpha$,
\[\mathcal{N}_\alpha:=\left\{X: -\frac{3}{4}D_\alpha^-\left(\Pi_\alpha(X)\right)<d_\alpha(X)<\frac{3}{4}D_\alpha^+\left(\Pi_\alpha(X)\right)\right\},\]
where $\Pi_\alpha(X)$ is the nearest point to $X$ on $\Gamma_\alpha$ and $d_\alpha$ is the signed distance to $\Gamma_\alpha$. By Theorem \ref{thm curvature decay} and Lemma \ref{lem decay rate of derivatives I}, $\Pi_\alpha$ is well defined and smooth
in the open set $\{(r,z): |d_\alpha(r,z)|<c_F r, r>R_\ast\}$ after perhaps enlarging $R_\ast$.

%while by Lemma \ref{lem decay rate of derivatives I}, there exists a constant $C$ such that for all $r$ large,
%\begin{equation}\label{4.02}
%D_i^\pm(r)\leq Cr^{\frac{1}{2}}.
%\end{equation}

\begin{lem}\label{lem ends of a nodal domain}
For each $\alpha$, there exists an $R_\alpha^\ast>R_\ast$ so that the following holds.
 \begin{itemize}
 \item[(i)] There is a connected component $\Omega_\alpha$ of $\{u_z\neq0\}\cap\{r>R_\alpha^\ast\}$, which contains $\Gamma_\alpha\cap\{r>R_\alpha^\ast\}$ and is contained in $\mathcal{N}_\alpha$.
\item[(ii)] There exists a constant $C_\alpha$ such that
\begin{equation}\label{energy growth quadratic}
\int_{\Omega_\alpha\cap\mathcal{C}_R}u_z^2\leq C_\alpha R^2, \quad \forall R>R_\alpha^\ast.
\end{equation}
\end{itemize}
\end{lem}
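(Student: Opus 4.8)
The plan is to deduce (ii) from (i) first, since it is the easier half, and then concentrate on the confinement statement (i). For (ii), I would use the pointwise inequality $u_z^2\le|\nabla u|^2\le 2\left[\frac12|\nabla u|^2+W(u)\right]$. By Lemma~\ref{lem comparison of distances} and the definition of the slabs $\mathcal{M}_\beta$, the tube $\mathcal{N}_\alpha$ (which reaches a fraction $3/4$ of the way toward the neighbouring ends) is covered, for $r$ large, by the three adjacent slabs, $\mathcal{N}_\alpha\subset\mathcal{M}_{\alpha-1}\cup\mathcal{M}_\alpha\cup\mathcal{M}_{\alpha+1}$. Granting $\Omega_\alpha\subset\mathcal{N}_\alpha$ from (i), I would then bound $\int_{\Omega_\alpha\cap\mathcal{C}_R}u_z^2$ by the sum of the three localized energies $\int_{\mathcal{M}_\beta\cap\{R_2<r<R\}}\left[\frac12|\nabla u|^2+W(u)\right]$, each of which is $\le C_\beta R^n=C_\beta R^2$ by Lemma~\ref{lem energy growth localized} with $n=2$ (the compact piece $\{r\le R_2\}$ contributes only a bounded amount). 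This yields \eqref{energy growth quadratic}.

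For (i), I would start from Corollary~\ref{coro 3.1}: in a unit-width strip about each $\Gamma_\beta$ the derivative $u_z$ has a fixed sign $s_\beta$, and the alternating sign convention for $u$ between consecutive ends forces $s_{\beta\pm1}=-s_\beta$. Define $\Omega_\alpha$ to be the nodal component of $u_z$ in $\{r>R_\alpha^\ast\}$ containing $\Gamma_\alpha\cap\{r>R_\alpha^\ast\}$. The crux is to locate the zero set of $u_z$ between $\Gamma_\alpha$ and $\Gamma_{\alpha+1}$. Writing $u=g_\ast+\phi$ as in Lemma~\ref{lem orthogonal condition} and differentiating, the dominant part of $u_z$ at a point with signed distance $d_\alpha=t$ above $\Gamma_\alpha$ comes from the exponential tails of the two adjacent transition layers, so that, modulo lower order terms and using the expansion of $g$ recalled in the introduction,
\[u_z\;\sim\;\sqrt{2}A\,s_\alpha\left(e^{-\sqrt{2}\,t}-e^{-\sqrt{2}\,(D_\alpha^+-t)}\right).\]
This quantity has sign $s_\alpha$ for $t<\tfrac12 D_\alpha^+$, vanishes near the midline $t=\tfrac12 D_\alpha^+$, and has sign $-s_\alpha$ beyond it; in particular at $t=\tfrac34 D_\alpha^+$ its sign is $-s_\alpha$. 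The analogous computation below $\Gamma_\alpha$ shows $u_z$ has sign $-s_\alpha$ on $\{d_\alpha=-\tfrac34 D_\alpha^-\}$. Since $\Omega_\alpha$ is connected, lies in $\{u_z\neq0\}$, and contains points where $u_z$ has sign $s_\alpha$, it cannot cross either of the surfaces $\{d_\alpha=\pm\tfrac34 D_\alpha^\pm\}$ without passing through a zero; hence $\Omega_\alpha\subset\mathcal{N}_\alpha$.

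The main obstacle is making the heuristic of the previous paragraph rigorous, that is, controlling the error $\phi_z$ together with the shift terms $h_\beta'$ against the \emph{exponentially small} leading term of $u_z$ near the midline. The margin between the cut-off fraction $3/4$ and the true zero location $1/2$ is what makes this feasible: at $d_\alpha=\tfrac34 D_\alpha^+$ the leading term has size $e^{-\frac{\sqrt{2}}{4}D_\alpha^+}$, which I would show dominates the error via the weighted bounds \eqref{first bound}, \eqref{improved estimate} and \eqref{estimate on h}, after first enlarging $R_\alpha^\ast$. The most delicate point is to rule out \emph{spurious} zeros of $u_z$ inside $\mathcal{N}_\alpha$ away from the two midlines, so that $\Omega_\alpha$ is genuinely a single tube rather than being pinched off; here the gluing estimates of Section~\ref{sec refined asymptotics} are needed to control the full nodal set, not merely its behaviour in the unit strips. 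Should the direct comparison prove insufficient when neighbouring ends are widely separated, I would fall back on the divergence structure of the Jacobi equation: since $u_z$ and the positive $\varphi$ both solve \eqref{linearized eqn} outside a compact set, the ratio $w:=u_z/\varphi$ satisfies $\mathrm{div}(\varphi^2\nabla w)=0$ there, so $w$ admits no interior local extrema and its nodal domains cannot close up, which again forces $\Omega_\alpha$ to be a single tube about $\Gamma_\alpha$.
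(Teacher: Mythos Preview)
Your treatment of part (ii) is exactly the paper's: bound $u_z^2$ by the energy density, cover $\mathcal{N}_\alpha$ by the three adjacent slabs $\mathcal{M}_{\alpha-1}\cup\mathcal{M}_\alpha\cup\mathcal{M}_{\alpha+1}$, and invoke Lemma~\ref{lem energy growth localized}.

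For part (i) you take a genuinely different route. The paper does \emph{not} expand $u_z$ through $g_\ast+\phi$; instead it argues by blow-up. At a base point $(r_\ast,z_\ast)\in\Gamma_\alpha$ it sets $\varepsilon=\max\{D_\alpha^+(r_\ast)^{-1},r_\ast^{-1}\}$, introduces the distance-type function $\Psi_\varepsilon$ via $u_\varepsilon=g(\Psi_\varepsilon/\varepsilon)$, and uses the vanishing viscosity limit: $\Psi_\varepsilon$ converges uniformly to an explicit piecewise-linear function, and the convergence is $C^1$ where the limit is smooth. From $\partial_z\Psi_\varepsilon$ one reads off directly that $u_z$ keeps one sign in a strip of width $\tfrac12\varepsilon^{-1}$ about $\Gamma_\alpha$ and the opposite sign on the slab $\tfrac34\varepsilon^{-1}<d_\alpha<\tfrac45\varepsilon^{-1}$, which after rescaling back is precisely the confinement in $\mathcal{N}_\alpha$. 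The point is that the blow-up scale is tied to $D_\alpha^+$ itself, so no comparison between an exponentially small quantity and a polynomial error is ever needed.

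Your direct comparison, by contrast, hinges on
\[
e^{-\frac{\sqrt{2}}{4}D_\alpha^+(r)}\;\gg\;|\phi_z|,
\]
and the bounds you cite, \eqref{first bound} and \eqref{improved estimate}, control $\phi$ only polynomially in $r$ (they are ``weighted'' in $r$, not in the distance to the layers). Since a priori $D_\alpha^+(r)$ carries no upper bound of order $\log r$ --- the lower bound $e^{-\sqrt{2}D_\alpha}\lesssim r^{-2}(\log\log r)^2$ goes the wrong way --- the left-hand side can be arbitrarily small compared with $r^{-2}$. You correctly flag this as the delicate point, but the fallback you propose does not close it: the divergence equation $\mathrm{div}(\varphi^2\nabla(u_z/\varphi))=0$ forbids compactly contained nodal domains, yet says nothing about $\Omega_\alpha$ protruding past $\{d_\alpha=\tfrac34 D_\alpha^+\}$ while remaining unbounded. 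To salvage your approach you would need a pointwise bound on $\phi_z$ that itself decays like $e^{-c\,\mathrm{dist}(\cdot,\cup_\beta\Gamma_\beta)}$; such an estimate is plausible (since $W''(u)\geq c$ away from the layers) but is not among the estimates of Section~\ref{sec refined asymptotics}, and would have to be proved separately.
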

\begin{proof}
(i) This follows by looking at the distance type function. Indeed, for any $(r_\ast,z_\ast)\in\Gamma_\alpha$ where $r_\ast$ is large, let $\varepsilon:=\max\{D_\alpha^+(r_\ast)^{-1},r_\ast^{-1}\}$ and
\[u_\varepsilon(r,z):=u\left(r_\ast+\varepsilon^{-1}r, z_\ast+\varepsilon^{-1}z\right).\]
By Proposition \ref{prop limit at infinity},
\begin{equation}\label{4.01}
\lim_{r\to+\infty}D_\alpha^\pm(r)=+\infty.
\end{equation}
Hence $\varepsilon\ll 1$ if $r_\ast\gg 1$.

Consider the distance type function $\Psi_\varepsilon$, which is defined by the relation
\[u_\varepsilon=g\left(\frac{\Psi_\varepsilon}{\varepsilon}\right).\]
By the vanishing viscosity method, as $\varepsilon\to0$, in any compact set of $\{-1\leq r \leq 1, -1\leq z \leq 1\}$, $\Psi_\varepsilon$ converges uniformly to
\makeatletter
\let\@@@alph\@alph
\def\@alph#1{\ifcase#1\or \or $'$\or $''$\fi}\makeatother
\begin{equation*}
{\Psi_\infty(r,z):=}
\begin{cases}
1-z, &1/2\leq z \leq 1, \\
z, &-1/2\leq z\leq 1/2\\
-1-z, &-1\leq z \leq -1/2.
\end{cases}
\end{equation*}
\makeatletter\let\@alph\@@@alph\makeatother
Moreover, because $\Psi_\infty$ is $C^1$ in $\{-1<r<1, -1/2<z<1/2\}$, $\Psi_\varepsilon$ converges in $C^1(\{-1<r<1, -1/2<z<1/2\})$. In particular, for all $\varepsilon$ small,
\[\frac{\partial u_\varepsilon}{\partial z}=\frac{1}{\varepsilon}g^\prime\left(\frac{\Psi_\varepsilon}{\varepsilon}\right)\frac{\partial\Psi_\varepsilon}{\partial z}<0, \quad\mbox{in }\left\{|r|<1/2, -1/4<z<1/4\right\}.\]
Similarly, $\frac{\partial u_\varepsilon}{\partial z}>0$ in $\left\{|r|<1/2, -4/5<z<-3/4\right\}\cup \left\{|r|<1/2, 3/4<z<4/5\right\}$.
Rescaling back we get the conclusion.

(ii) This follows by adding the estimates of Lemma \ref{lem energy growth localized} in $\alpha,\alpha+1$ and $\alpha-1$.
\end{proof}

\begin{lem}\label{lem Liouville}
Suppose $\Omega$ is a nodal domain of $u_z$, which is disjoint from $B_{R_\ast}^2(0)\times(-R_\ast,R_\ast)$. Then
\[\limsup_{r\to+\infty}\frac{1}{r^2}\int_{\Omega\cap B_r(0)}u_z^2=+\infty.\]
\end{lem}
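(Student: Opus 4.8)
The plan is to argue by contradiction: if the limsup were finite, there would be constants $C_0$ and $r_0$ with $\int_{\Omega\cap B_r(0)}u_z^2\le C_0r^2$ for all $r\ge r_0$, and I will show this quadratic growth forces $u_z\equiv 0$ on $\Omega$, contradicting that $\Omega$ is a nodal domain. The mechanism is the Berestycki--Caffarelli--Nirenberg stability trick. Since $\Omega$ is disjoint from the box $B^2_{R_\ast}(0)\times(-R_\ast,R_\ast)$, the positive solution $\varphi$ of \eqref{linearized eqn} is defined and strictly positive throughout $\Omega$; after relabelling the sign I may assume $u_z>0$ in $\Omega$. Set $\psi:=u_z/\varphi>0$ in $\Omega$; because $u_z$ vanishes on $\partial\Omega$ while $\varphi>0$, we have $\psi=0$ on $\partial\Omega$.

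Both $u_z$ and $\varphi$ solve the same linear equation $\Delta w=W^{\prime\prime}(u)w$ on $\Omega$, so the substitution $u_z=\varphi\psi$ eliminates the potential and yields the divergence-form identity
\[\mathrm{div}\left(\varphi^2\nabla\psi\right)=0\quad\text{in }\Omega.\]
Testing this against $\psi\zeta^2$ with $\zeta\in C_0^\infty(\R^3)$, and using $\psi=0$ on $\partial\Omega$ to kill the boundary term there, integration by parts together with Cauchy--Schwarz give the Caccioppoli estimate
\[\int_{\Omega}\varphi^2|\nabla\psi|^2\zeta^2\le 4\int_{\Omega}\varphi^2\psi^2|\nabla\zeta|^2=4\int_{\Omega}u_z^2|\nabla\zeta|^2,\]
where I have used $\varphi^2\psi^2=u_z^2$.

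Now choose $\zeta=\zeta_R$ to be the logarithmic cutoff equal to $1$ on $B_{\sqrt R}(0)$, equal to $\log(R/|x|)/\log\sqrt R$ on $B_R\setminus B_{\sqrt R}$, and $0$ outside $B_R$, so that $|\nabla\zeta_R|=\left(|x|\log\sqrt R\right)^{-1}$ on the annulus. Splitting the annulus dyadically and inserting the assumed bound $\int_{\Omega\cap B_r}u_z^2\le C_0r^2$ gives
\[\int_{\Omega}u_z^2|\nabla\zeta_R|^2\le\frac{1}{(\log\sqrt R)^2}\int_{\Omega\cap(B_R\setminus B_{\sqrt R})}\frac{u_z^2}{|x|^2}\le\frac{C\log R}{(\log\sqrt R)^2}\xrightarrow[R\to\infty]{}0.\]
Hence $\int_{\Omega}\varphi^2|\nabla\psi|^2=0$, so $\psi$ is constant on $\Omega$. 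If that constant is nonzero then $u_z=c\varphi$ cannot vanish on $\partial\Omega$, which is impossible since $\partial\Omega\subset\{u_z=0\}$; thus $c=0$ and $u_z\equiv0$ in $\Omega$, contradicting that $\Omega$ is a nodal domain. This proves the limsup must be $+\infty$.

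The step I expect to be most delicate is the rigorous integration by parts leading to the Caccioppoli inequality: $\Omega$ is unbounded and its boundary $\partial\Omega$ is only as regular as the nodal set $\{u_z=0\}$ allows, so one must justify that the boundary contribution genuinely vanishes and that $\psi=u_z/\varphi$ has enough integrability near $\partial\Omega$ and at infinity for the manipulations to be legitimate. I would handle this by approximating $\zeta_R$ with functions also cut off in a shrinking neighborhood of $\partial\Omega$ and passing to the limit, using that $\varphi$ is bounded below on compact subsets of $\Omega$ together with interior elliptic estimates for $u_z$. Once this is in place, the logarithmic-cutoff computation and the resulting quadratic-growth Liouville conclusion are standard.
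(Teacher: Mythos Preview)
Your proposal is correct and follows essentially the same approach as the paper: argue by contradiction, use that $u_z$ and $\varphi$ solve the same linearized equation on $\Omega$ to obtain $\mathrm{div}(\varphi^2\nabla(u_z/\varphi))=0$, and invoke the quadratic-growth Liouville theorem (the paper simply cites \cite{8} and \cite{2} for this, while you spell out the Berestycki--Caffarelli--Nirenberg logarithmic-cutoff argument). Your discussion of the boundary integration-by-parts issue is appropriate and matches what is implicit in those references.
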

\begin{proof}
Assume by the contrary, there exists a constant $C$ such that for all $r$ large,
\[ \int_{\Omega\cap B_r(0)}u_z^2 \leq Cr^2.\]
Then the standard Liouville type theorem applies to the degenerate equation (see \cite{GG,A-C})
\[\mbox{div}\left(\varphi^2\nabla\frac{u_z}{\varphi}\right)=0,\]
which implies that $u_z\equiv 0$ in $\Omega$.  This is a contradiction.
\end{proof}

\begin{proof}[Proof of Lemma \ref{lem nodal domain}]
Assume by the contrary, there is a nodal domain of $u_z$ disjoint from $B_{R_\ast}^2(0)\times(-R_\ast,R_\ast)$. Denote it by $\Omega$ and assume without loss of generality $u_z>0$ in $\Omega$. Since for any $R,r>0$,
\[|\mathcal{C}_R\cap B_r(0)|\leq CR^2r,\]
Lemma \ref{lem Liouville} implies that $\Omega$ cannot be totally contained in $\mathcal{C}_R$. In other words, $\Omega$ is unbounded in the $r$ direction.

Let $\Omega_\alpha$ be defined as in Lemma \ref{lem ends of a nodal domain}. Then we claim that\\
{\bf Claim.} There exists at most one $\alpha$ such that $\Omega_\alpha\subset\Omega$.\\
To prove this claim, we assume by the contrary that there are $\alpha\neq \beta$ such that $\Omega_\alpha\cup\Omega_\beta\subset\Omega$. Since $u_z>0$ in $\Omega_\alpha\cup\Omega_\beta$, $|\alpha-\beta|\geq 2$. In particular, there exists a $\gamma$ lying between $\alpha$ and $\beta$. Moreover, $u_z<0$ in $\Omega_\gamma$.

Let $\widetilde{\Omega}$ be the nodal domain of $u_z$ containing $\Omega_\gamma$. Viewing all of these domains as open sets in the $(r,z)$ plane, $\Omega_\alpha$ and $\Omega_\beta$ can be connected by a continuous curve totally contained in $\Omega$, which together with $\Gamma_\alpha$ and $\Gamma_\beta$ forms a simple unbounded Jordan curve. This curve divides the plane into at least two domains, $\widetilde{\Omega}$ lying on one side and $B_{R_\ast}^2(0)\times(-R_\ast,R_\ast)$ on the other side.

Then there are only finite many of ends of $u$ in $\widetilde{\Omega}$,  and we can add the estimates in Lemma \ref{lem energy growth localized} to arrive at
\[\int_{\widetilde{\Omega}\cap B_R(0)}|\nabla u|^2\leq C_{\alpha\beta}R^2, \quad \forall R \quad \mbox{large}.\]
This is a contradiction with Lemma \ref{lem Liouville}, which finishes the proof of the Claim.

By this Claim, there exists an $R_3 >0$ such that $\Omega\cap\{r>R_3 \}\subset\{f_{\alpha-1}(r)<z<f_{\alpha+1}(r)\}$. Using Lemma \ref{lem energy growth localized} again, we get a constant $C$ such that
\[\int_{ \Omega\cap B_R(0)}|\nabla u|^2\leq C R^2, \quad \forall R \quad \mbox{large}.\]
Since $\Omega$ is assumed to be disjoint from $B_{R_\ast}^2(0)\times(-R_\ast,R_\ast)$, applying Lemma \ref{lem Liouville} again we get a contradiction. This completes the proof.
\end{proof}
Since $u$ is smooth, the number of connected components of $\{u_z\neq0\}\cap B_{2R_\ast}(0)$ is finite.
Then by the above lemma we obtain
\begin{coro}
There are only finitely many nodal domains of $u_z$.
\end{coro}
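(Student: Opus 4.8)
The plan is to deduce finiteness from Lemma~\ref{lem nodal domain} together with the local finiteness of components in a fixed ball that has just been noted, via a simple injectivity argument; all the analytic content has already been absorbed into Lemma~\ref{lem nodal domain}. First I would observe that the compact cylinder $B_{R_\ast}^2(0)\times(-R_\ast,R_\ast)$ is contained in the ball $B_{2R_\ast}(0)$, since every point of the cylinder lies within distance $\sqrt{2}\,R_\ast<2R_\ast$ of the origin. Consequently, by Lemma~\ref{lem nodal domain}, every nodal domain $\Omega$ of $u_z$ meets $B_{R_\ast}^2(0)\times(-R_\ast,R_\ast)$, and hence meets $B_{2R_\ast}(0)$.

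Next I would set up an injection from the collection of nodal domains of $u_z$ into the (finite) set of connected components of $\{u_z\neq0\}\cap B_{2R_\ast}(0)$. Given a nodal domain $\Omega$, the open set $\Omega\cap B_{2R_\ast}(0)$ is nonempty by the previous paragraph; I pick a point $x\in\Omega\cap B_{2R_\ast}(0)$ and let $C(\Omega)$ denote the connected component of $\{u_z\neq0\}\cap B_{2R_\ast}(0)$ containing $x$. The one point that needs checking is the containment $C(\Omega)\subset\Omega$: this holds because $\Omega$ is by definition the connected component of the full set $\{u_z\neq0\}$ through $x$, while $\{u_z\neq0\}\cap B_{2R_\ast}(0)$ is a subset of $\{u_z\neq0\}$, so a connected subset of the smaller set passing through $x$ cannot leave $\Omega$. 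Thus $C(\Omega)$ is a genuine component of $\{u_z\neq0\}\cap B_{2R_\ast}(0)$ lying inside $\Omega$.

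Finally I would verify injectivity: distinct nodal domains $\Omega_1\neq\Omega_2$ are disjoint, being distinct connected components of $\{u_z\neq0\}$, so $C(\Omega_1)\subset\Omega_1$ and $C(\Omega_2)\subset\Omega_2$ are disjoint and in particular distinct. Since the target set is finite, the assignment $\Omega\mapsto C(\Omega)$ being injective forces the number of nodal domains to be finite, which is exactly the assertion. I do not expect any real obstacle in this step; the only place demanding a moment's care is the containment $C(\Omega)\subset\Omega$, i.e.\ that restricting to components inside the ball neither merges nor splits the ambient nodal domains, and this is immediate from the definition of connected component applied to nested sets.
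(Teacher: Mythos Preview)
Your argument is correct and follows essentially the same route as the paper: the paper's entire proof consists of the two sentences preceding the corollary, which invoke Lemma~\ref{lem nodal domain} together with the finiteness of components of $\{u_z\neq0\}\cap B_{2R_\ast}(0)$, and you have simply spelled out the implicit injection from nodal domains to local components that makes this inference precise.
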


Now we come to the proof of Theorem \ref{main result 1.1}.
\begin{proof}[Proof of Theorem \ref{main result 1.1}]
By the previous corollary, nodal domains of $u_z$ are denoted by $\Omega^m$, $m=1,\cdots, N$ for some $N\in\mathbb{N}$.

Assume there are infinitely many ends, $\Gamma_\alpha$. These ends are divided into $N$ classes, $\mathcal{I}_m$ ($1\leq m\leq N$), that is, $\Gamma_\alpha\in\mathcal{I}_m$ if  $\Omega_\alpha\subset\Omega^m$.

There is a class, say $\mathcal{I}_1$, containing infinitely many ends. Take two indicies $\alpha,\beta\in\mathcal{I}_1$ which are adjacent in $\mathcal{I}_1$. $\Gamma_\alpha$ and $\Gamma_\beta$ are connected by a curve in $\Omega^1$, together with $\Gamma_\alpha$ and $\Gamma_\beta$ which gives a simple unbounded Jordan curve $\gamma_{\alpha\beta}$ in the plane. This curve divides the $(r,z)$ plane into at least two open domains. Since $u_z$ has the same sign in $\Omega_{\alpha}$ and $\Omega_{\beta}$, there exists a $\Gamma_\gamma$ lying between $\Gamma_{\alpha}$ and $\Gamma_{\beta}$. Assume $\Omega_\gamma\subset\Omega^{M(\alpha)}$.
This defines a map from $\mathcal{I}_1$ to $\{1,\cdots, N\}$. Moreover, if $\alpha,\beta\in\mathcal{I}_1$ and $\alpha\neq \beta$, then $M(\alpha)\neq M(\beta)$, in other words, $\Omega^{M(\alpha)}$ and $\Omega^{M(\beta)}$ lie on two sides of a simple Jordan curve totally contained in $\Omega^1$. This leads to a contradiction because
$\mathcal{I}_1$ is an infinite set.

Once we know that there are only finitely many ends, by Lemma \ref{lem energy growth localized} we obtain a constant $C$ such that
\[\int_{B_R(0)\setminus \mathcal{C}_{R_\ast}}\left[\frac{1}{2}|\nabla u|^2+W(u)\right]\leq CR^2, \quad \forall R>R_\ast.\]
On the other hand,
\[\int_{B_R(0)\cap \mathcal{C}_{R_\ast}}\left[\frac{1}{2}|\nabla u|^2+W(u)\right]\leq C|B_R(0)\cap \mathcal{C}_{R_\ast}|\leq CR_\ast ^2R, \quad \forall R>R_\ast.\]
Combining these two estimates we get \eqref{energy growth}.

Finally, since there are only finitely many ends, by Lemma \ref{lem decay rate of derivatives I}, there exist two constants $C_4,R_4>0$ such that $\{u=0\}\setminus\mathcal{C}_{R_4}\subset\{|z|<C_4r\}$. From this we see the existence of $R>0$ such that $u$ does not change sign in $\mathcal{C}_R\cap\{|z|>R\}$.
\end{proof}

\section{Bound on number of ends: Proof of Theorems \ref{main result 2} and \ref{main result 3}}\label{sec low Morse index case}
\setcounter{equation}{0}

Since the quadratic energy growth bound has been established in Theorem \ref{main result 1.1}, the method in dimension $2$ (see \cite{Wang-Wei2}) can be extended to our setting, which gives
\begin{lem}\label{lem number of nodal domains}
Suppose $u$ is an axially symmetric solution of \eqref{Allen-Cahn} with Morse index $N\ge 1$ in $\R^3$. Then for any $e\in\R^3$, there are at most $2N$ nodal domains of $u_e:=e\cdot\nabla u$.
\end{lem}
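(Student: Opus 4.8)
The plan is to bound the number of nodal domains of the directional derivative $u_e = e\cdot\nabla u$ by relating that count to the Morse index $N$ through the stability operator. The starting observation is that $u_e$ solves the linearized equation $\Delta u_e = W''(u)u_e$, since differentiating \eqref{Allen-Cahn} in any fixed direction $e$ commutes with the Laplacian and with $W'(u)$. Thus on each nodal domain $\Omega$ of $u_e$, the function $u_e$ is a (signed) first Dirichlet eigenfunction of the operator $-\Delta + W''(u)$ with eigenvalue $0$; in particular the first eigenvalue of this operator on $\Omega$ vanishes, and the quadratic form $\mathcal{Q}_\Omega$ has a nontrivial kernel and is nonnegative on $C_0^\infty(\Omega)$.

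The main mechanism is then a standard variational pairing. Suppose there are $2N+1$ distinct nodal domains $\Omega^1,\dots,\Omega^{2N+1}$. The aim is to manufacture an $(N+1)$-dimensional subspace of compactly supported functions on which $\mathcal{Q}$ is negative (or nonpositive with enough slack to perturb to negative), contradicting the definition of Morse index $N$. The difficulty is that $u_e\lfloor_{\Omega^m}$, extended by zero, is not compactly supported (the nodal domains are unbounded in $\mathbb{R}^3$), so one cannot directly feed it into $\mathcal{Q}$. Here is where the quadratic energy growth bound from Theorem \ref{main result 1.1}, namely \eqref{energy growth}, becomes essential: it controls $\int_{B_R} u_e^2$ and $\int_{B_R}|\nabla u_e|^2$ by $CR^2$, which is precisely the borderline growth that allows a logarithmic cutoff argument to produce a genuinely compactly supported competitor with arbitrarily small error. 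The plan is to multiply $u_e\lfloor_{\Omega^m}$ by a logarithmic cutoff $\eta_R$ supported in $B_{R^2}\setminus B_R$ (of the Farina type used already in Section \ref{sec high dim case}), for which $\int |\nabla\eta_R|^2 u_e^2 \to 0$ as $R\to\infty$ thanks to the $CR^2$ growth. Evaluating $\mathcal{Q}$ on the truncation $\eta_R\, u_e\lfloor_{\Omega^m}$ and using that $u_e$ is a zero-eigenfunction on $\Omega^m$, one obtains $\mathcal{Q}(\eta_R u_e\lfloor_{\Omega^m}) = o(1)$.

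From these $2N+1$ functions, which have disjoint supports and hence are $\mathcal{Q}$-orthogonal, the argument proceeds by a dimension count. On the span of any collection of them $\mathcal{Q}$ is (up to $o(1)$) nonpositive, and one shows it cannot be nonnegative on a space of dimension exceeding $N$ unless each generator lies in the kernel. To upgrade ``nonpositive'' to a strict contradiction with index exactly $N$, the standard device is to pair the approximate kernel elements against the $N$ negative directions: the disjointness of supports forces at least $N+1$ of the $2N+1$ nodal functions to be $\mathcal{Q}$-orthogonal to the negative eigenspace, and a zero-eigenfunction that is also orthogonal to the negative space and compactly supported (in the limit) violates stability outside a compact set combined with the strong maximum principle, since $u_e$ changes sign across $\partial\Omega^m$. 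I expect the hard part to be making this last step rigorous while the competitors are only \emph{approximately} compactly supported and only \emph{approximately} in the kernel; this is exactly the technical core of the two-dimensional argument in \cite{Wang-Wei2} that the lemma invokes, and the role of \eqref{energy growth} is to guarantee the error terms from truncation vanish in the limit so that the clean dimension count survives.
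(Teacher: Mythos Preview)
Your proposal is correct and matches the paper's approach: the paper does not give an independent proof but simply notes that the quadratic energy growth bound \eqref{energy growth} from Theorem~\ref{main result 1.1} allows the two-dimensional method of \cite{17} to be transplanted verbatim, which is precisely the log-cutoff plus dimension-count argument you outline. Your one slip is the phrasing ``$\eta_R$ supported in $B_{R^2}\setminus B_R$'' (it is $\nabla\eta_R$ that is supported there) and the claim that disjoint supports force $N{+}1$ \emph{individual} nodal functions to be orthogonal to the negative eigenspace (one only gets $N{+}1$ independent linear combinations), but you correctly flag this step as the delicate one and defer to the cited reference, exactly as the paper does.
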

We first use this lemma to prove Theorem \ref{main result 2}.
\begin{proof}[Proof of Theorem \ref{main result 2}]
If $u$ is stable, by Lemma \ref{lem number of nodal domains}, $u_z$ does not change sign. Then we can apply the main result in \cite{A-C} to deduce the one dimensional symmetry of $u$. Furthermore, by the axial symmetry, $u(r,z)\equiv g(z-t)$ for some $t\in\R$.
\end{proof}

Concerning solutions with Morse index $1$, we first show
\begin{lem}\label{lem 3 ends}
An axially symmetric  solution of \eqref{Allen-Cahn} with Morse index $1$ has at most three ends.
\end{lem}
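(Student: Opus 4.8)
The plan is to combine the bound on the number of nodal domains of $u_z$ (Lemma \ref{lem number of nodal domains}) with a planar Jordan-curve separation argument of the type already used in the proof of Theorem \ref{main result 1.1}. Since a Morse index $1$ solution has finite Morse index, by Theorem \ref{main result 1.1} it has finitely many ends $\Gamma_1<\cdots<\Gamma_Q$, and by Lemma \ref{lem number of nodal domains} (applied with $N=1$ and $e=\partial_z$) the function $u_z$ has at most two nodal domains. If $u_z$ has only one nodal domain then it does not change sign, and the main result of \cite{A-C} forces $u$ to be one dimensional, hence to have a single end; so I may assume $u_z$ has exactly two nodal domains. Throughout the contradiction argument $Q\geq 2$, which (as the next step shows) forces $u_z$ to change sign, so these two domains are $\Omega^+=\{u_z>0\}$ and $\Omega^-=\{u_z<0\}$, each connected.

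Next I would record the sign structure along the ends. Because $u$ has sign $(-1)^\alpha$ between $\Gamma_\alpha$ and $\Gamma_{\alpha+1}$, crossing $\Gamma_\alpha$ in the increasing-$z$ direction reverses the sign of $u$, so the sign of $u_z$ on $\Gamma_\alpha$ (which is definite near infinity by Corollary \ref{coro 3.1}) alternates with $\alpha$. Consequently the far-field component $\Omega_\alpha$ from Lemma \ref{lem ends of a nodal domain}(i), on which $u_z$ carries the constant sign of $u_z|_{\Gamma_\alpha}$, is contained in $\Omega^+$ for $\alpha$ of one parity and in $\Omega^-$ for the other. In particular, two ends whose indices differ by an even number have their neighborhoods in the same nodal domain, while adjacent ends have neighborhoods in opposite nodal domains.

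Then I argue by contradiction: suppose $Q\geq 4$ and examine $\Gamma_1,\Gamma_2,\Gamma_3,\Gamma_4$. After possibly swapping the labels of $\Omega^\pm$ I may assume $\Omega_1,\Omega_3\subset\Omega^+$ and $\Omega_2,\Omega_4\subset\Omega^-$. Since $\Omega^+$ is connected and $\Gamma_1,\Gamma_3\subset\Omega^+$, I join $\Omega_1$ to $\Omega_3$ by a path inside $\Omega^+$; together with the two ends $\Gamma_1$ and $\Gamma_3$ this yields a properly embedded simple curve $\gamma\subset\Omega^+$ both of whose ends escape to $r=+\infty$, so $\gamma$ separates the $(r,z)$ plane into two components. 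Because $f_1<f_2<f_3<f_4$ for large $r$, the far portion of $\Omega_2$ lies in the component ``between'' $\Gamma_1$ and $\Gamma_3$, whereas the far portion of $\Omega_4$ lies in the other component. But $\Omega^-\supset\Omega_2\cup\Omega_4$ is connected and disjoint from $\gamma\subset\Omega^+$, hence cannot meet both components of the complement of $\gamma$ --- a contradiction. Therefore $Q\leq 3$.

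The main obstacle is the topological bookkeeping: I must verify that $\gamma$ is a genuinely separating proper line and that $\Omega_2$ and $\Omega_4$ really fall on opposite sides of it (this is where the ordering $f_1<f_2<f_3<f_4$ and the fact that the $\Omega_\alpha$ persist out to $r=+\infty$ are essential), together with the fact that a single connected nodal domain $\Omega^-$ cannot straddle a curve lying entirely in $\Omega^+$. The sign-alternation step, though elementary, is what pins down exactly which ends share a nodal domain and thereby makes the separation argument applicable; everything else (finiteness of ends, the far-field components $\Omega_\alpha$, and the two-nodal-domain bound) is quoted from the results above.
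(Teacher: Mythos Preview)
Your proposal is correct and follows essentially the same route as the paper: reduce to exactly two nodal domains of $u_z$, note the alternating signs along the ends, and use a planar Jordan-curve separation argument with a path in $\{u_z>0\}$ joining $\Gamma_1$ to $\Gamma_3$ to show $\Omega_2$ and $\Omega_4$ cannot lie in a single connected component of $\{u_z<0\}$. The only minor difference is bookkeeping: the paper invokes Theorem~\ref{main result 2} to rule out the one-nodal-domain case directly (a Morse index~$1$ solution cannot have $u_z$ of one sign, since that would force stability), whereas you observe instead that this case yields a single end and hence already satisfies the conclusion; either way is fine.
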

\begin{proof}
If the Morse index of $u$ is $1$, by Lemma \ref{lem number of nodal domains} and Theorem \ref{main result 2}, there are exactly two nodal domains of $u_z$.

Assume there are at least $4$ ends.  Take $4$ adjacent ones, $\Gamma_\alpha$, $\alpha=1,\cdots,4$. Recall the notation $\Omega_\alpha$ defined in Lemma \ref{lem ends of a nodal domain}. Assume $u_z>0$ in $\Omega_1$ and $\Omega_3$, $u_z<0$ in $\Omega_2$ and $\Omega_4$. Since $\{u_z>0\}$ is a connected set, there is a continuous curve connecting $\Gamma_1$ and $\Gamma_3$ in $\{u_z>0\}$, which gives a simple unbounded Jordan curve contained in $\{u_z>0\}$. Clearly
$\Omega_2$ and $\Omega_4$ lies on different sides of this curve, therefore $\{u_z<0\}$ cannot be a connected set. This gives at least three nodal domains of $u_z$, a contradiction.
\end{proof}

\begin{lem}\label{lem sign of u_r}
Suppose $u$ is an axially symmetric  solution of \eqref{Allen-Cahn} with Morse index $1$. Then $u_r>0$ or $u_r<0$ strictly in $\{r\neq 0\}$.
\end{lem}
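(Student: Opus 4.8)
The plan is to relate the radial derivative $u_r$ to the Cartesian derivative $u_{e_1}:=\partial_{x_1}u=e_1\cdot\nabla u$, where $e_1$ is a unit vector orthogonal to the $z$-axis, and to feed this into the nodal-domain bound of Lemma \ref{lem number of nodal domains}. The starting point is the elementary identity, valid in cylindrical coordinates $(r,\theta,z)$,
\[
u_{e_1}(x)=u_r(r,z)\,\frac{x_1}{r}=u_r(r,z)\cos\theta .
\]
Differentiating \eqref{Allen-Cahn} in $x_1$ shows that $u_{e_1}$ solves the linearized equation $\Delta u_{e_1}=W^{\prime\prime}(u)u_{e_1}$ in $\R^3$. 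Since $u$ has Morse index $1$, Lemma \ref{lem number of nodal domains} applied with $N=1$ and $e=e_1$ gives that $u_{e_1}$ has \emph{at most two} nodal domains.

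Next I would read off the nodal structure of $u_{e_1}$ from the factorization above. The whole plane $\{x_1=0\}$, where $\cos\theta=0$, lies in $\{u_{e_1}=0\}$ and separates $\{x_1>0\}$ from $\{x_1<0\}$; no nodal domain of $u_{e_1}$ can meet both half-spaces. In $\{x_1>0\}$ one has $\cos\theta>0$, so $u_{e_1}$ has the same sign as $u_r$, and the connected components of $\{u_{e_1}\neq0\}\cap\{x_1>0\}$ are exactly the rotations through $\theta\in(-\pi/2,\pi/2)$ of the connected components of $\{u_r\neq0\}\cap\{r>0\}$, a bijection; the symmetric statement holds in $\{x_1<0\}$. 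Hence the number of nodal domains of $u_{e_1}$ equals \emph{twice} the number of nodal domains of $u_r$ in the half-plane $\{r>0\}$. Combined with the bound from the previous step, $u_r$ has at most one nodal domain there, i.e. $u_r$ does not change sign in $\{r>0\}$.

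It then remains to exclude $u_r\equiv0$ and to upgrade to a strict sign. If $u_r\equiv0$, then $u=u(z)$ solves the one-dimensional equation $u_{zz}=W^\prime(u)$, whose only bounded solutions are the constants $\pm1$ and the translates $g(z-t)$ of the heteroclinic; all of these are stable and so have Morse index $0$, contradicting the hypothesis. Thus $u_r\not\equiv0$, whence $u_{e_1}\not\equiv0$. Fixing the sign, say $u_{e_1}\ge0$ in $\{x_1>0\}$, I would apply the strong maximum principle to $\Delta u_{e_1}=W^{\prime\prime}(u)u_{e_1}$: writing it as $\Delta u_{e_1}-\big(W^{\prime\prime}(u)+\lambda\big)u_{e_1}=-\lambda u_{e_1}\le0$ with $\lambda$ large enough that the zeroth-order coefficient is nonnegative, a nontrivial nonnegative supersolution cannot attain an interior zero, so $u_{e_1}>0$ strictly in the open half-space $\{x_1>0\}$. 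Translating back through the identity, $u_r>0$ (or $u_r<0$) strictly throughout $\{r>0\}$, which is the asserted conclusion in $\{r\neq0\}$.

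The main obstacle is the bookkeeping in the second paragraph: verifying that the nodal plane $\{x_1=0\}$ genuinely isolates the two half-spaces and that distinct nodal domains of $u_r$ cannot merge under rotation, so that the count of nodal domains of $u_{e_1}$ is \emph{exactly} twice that of $u_r$. Once this correspondence is established, the identity for $u_{e_1}$, the exclusion of $u_r\equiv0$, and the strong maximum principle are all routine.
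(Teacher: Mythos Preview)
Your proof is correct and follows essentially the same approach as the paper: both exploit the factorization $u_{x_1}=u_r\cos\theta$ together with the bound of Lemma~\ref{lem number of nodal domains} for $e=e_1$, and both hinge on the observation that a sign change of $u_r$ in $\{r>0\}$ would produce (at least) four nodal domains of $u_{x_1}$. The only cosmetic difference is that the paper phrases this as a contradiction (assume a zero of $u_r$, produce four nodal domains), whereas you count directly; your treatment of the strict sign via the strong maximum principle and the exclusion of $u_r\equiv 0$ is a bit more explicit than the paper's, but amounts to the same thing.
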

\begin{proof}
First note that $\{u_r=0\}\subset\{u_{x_1}=0\}$. Hence it cannot have interior points.
Assume by the contrary that there exist zero points of $u_r$ in $\{r\neq 0\}$. Then $\{u_{x_1}=0\}\cap\{r\neq 0\}\neq \emptyset$. Because most part of $\{u_{x_1}=0\}$ are smooth surfaces,   $\{u_{x_1}>0\}\cap\{r\neq 0\}\neq \emptyset$ and $\{u_{x_1}<0\}\cap\{r\neq 0\}\neq \emptyset$. From this and the axial symmetry we deduce the existence of two open domains $\Omega^\pm$ in the $(r,z)$ plane, where $u_r>0$ in $\Omega^+$ and $u_r<0$ in $\Omega^-$. Viewing them as open domains in $\R^3$, then $\Omega^+\cap\{x_1>0\}$ and $\Omega^-\cap\{x_1<0\}$ are two connected components  of $\{u_{x_1}>0\}$,
while $\Omega^+\cap\{x_1<0\}$ and $\Omega^-\cap\{x_1>0\}$ are two connected components  of $\{u_{x_1}<0\}$. Hence there are at least four nodal domains of $u_{x_1}$, a contradiction with Lemma \ref{lem number of nodal domains}.
\end{proof}

\begin{proof}[Proof of Theorem \ref{main result 3}]
In view of Lemma \ref{lem 3 ends}, we only need to exclude the possibility of  three ends.

By Lemma \ref{lem sign of u_r}, we can assume $u_r>0$   in $\{r\neq0\}$. Hence each connected component $\Gamma_\alpha$ of $\{u=0\}$ is a graph in the $r$-direction. There are   two cases:
\begin{itemize}
\item[{\bf Type I.}] $\Gamma_\alpha$ is not disjoint from the $z$ axis, hence it has the form $\{r=f_\alpha(z)\}$ where $f_\alpha$ is a function defined on an interval $[z_\alpha^-,z_\alpha^+)$ of the $z$ axis and $f_\alpha(z_\alpha^-)=0$;
\item[{\bf Type II.}] $\Gamma_\alpha$ is disjoint from the $z$ axis, hence it has the form $\{r=f_\alpha(z)\}$ where $f_\alpha$ is a function defined on an open interval $(z_\alpha^-,z_\alpha^+)$ of the $z$ axis.
\end{itemize}
For type I, we have $\lim_{z\to z_\alpha^+}f_i(z)=+\infty$, thus $\Gamma_\alpha$ contributes one end. For Type II, we must have $\lim_{z\to z_\alpha^\pm}f_i(z)=+\infty$, thus $\Gamma_\alpha$ contributes two ends.
Since $u$ has three ends, there are either three Type I components or one Type I plus one Type II components. Therefore $u$ can change sign one time or  three times on the $z$-axis.

{\bf Case 1.}  $u$ changes sign three times on the $z$-axis.

In this case, there is an interval $(a^-,a^+)$ such that $u(0,z)<0$ in $(a^-,a^+)$ and $u(a^-)=u(a^+)=0$. Let $\{z=f^\pm(r)\}$ be the connected components of $\{u=0\}$ emanating from $(0,a^\pm)$ respectively. Because $u_r>0$, $f^+(r)$ is decreasing in $r$
and $f^-$ is increasing. Hence
\[\lim_{r\to+\infty}\left(f^+(r)-f^-(r)\right)\leq a^+-a^-.\]
This is a contradiction with Proposition \ref{prop limit at infinity}.

{\bf Case 2.} $u$ changes sign one time on the $z$-axis.

Without loss of generality, assume $u(0,0)=0$, $u(0,z)>0$ for $z>0$ and $u(0,z)<0$ for $z<0$. There exists a connected component of $\{u=0\}$ emanating from $(0,0)$, in the form $\{z=f(r)\}$. As in Case 1, $f$ is decreasing in $r$. In particular, $u>0$ in $\{z>0\}$.
The other component of $\{u=0\}$ is Type II, which is represented by the graphs $\{z=f^\pm(r)\}$ for two functions $f^+>f^-$ defined on $[R_\ast,+\infty)$ for some $R_\ast>0$. Here $f^+$ is still increasing in $r$. As in Case 1 we get
\[\lim_{r\to+\infty}\left(f(r)-f^+(r)\right)<+\infty,\]
a contradiction  with Proposition \ref{prop limit at infinity} again.
\end{proof}

\bigskip

\end{document}